\newtheorem{proposition}{Proposition}[section]
\newtheorem{theorem}[proposition]{Theorem}
\newtheorem{corollary}[proposition]{Corollary}
\newtheorem{remark}[proposition]{Remark}
\newtheorem{example}[proposition]{Example}
\numberwithin{equation}{section}
\newcommand{\overbar}[1]{\mkern 1.5mu\overline{\mkern-1.5mu#1\mkern-1.5mu}\mkern 1.5mu}
\newcommand{\nc}{\newcommand}
\newcommand{\I}{{\bf 1}}
\nc{\R}{{\mathbb R}}
\nc{\C}{{\mathbb C}}
\nc{\N}{{\mathbb N}}
\nc{\Z}{{\mathbb Z}}
\nc{\BP}{\mathbb{P}}
\nc{\BE}{\mathbb{E}}
\nc{\BQ}{\mathbb{Q}}
\nc{\BX}{{\mathbb X}}
\nc{\cA}{{\mathcal A}}
\nc{\cB}{{\mathcal B}}
\nc{\cX}{{\mathcal X}}
\nc{\cK}{{\mathcal K}}
\nc{\cH}{{\mathcal H}}
\nc{\cR}{{\mathcal R}}
\nc{\E}{{\mathrm e}}
\DeclareMathOperator{\BV}{{\mathbb Var}}
\DeclareMathOperator{\CV}{{\mathbb Cov}}
\nc{\cC}{{\mathcal C}}
\nc{\cP}{{\mathcal P}}
\nc{\cF}{{\mathcal F}}
\nc{\val}{{\mathbf {Val}}}
\begin{document}

\author{Daniel Hug\footnotemark[1]\,, G\"unter Last\footnotemark[2]\,,Wolfgang Weil\footnotemark[3]}
\footnotetext[1]{daniel.hug@kit.edu,
Karlsruhe Institute of Technology, 76131 Karlsruhe, Germany.}
\footnotetext[2]{guenter.last@kit.edu,
Karlsruhe Institute of Technology, 76131 Karlsruhe, Germany.}
\footnotetext[3]{Karlsruhe Institute of Technology, 76131 Karlsruhe, Germany. }
\title{Boolean models}
\date{\today}

\maketitle

\begin{abstract}
\noindent
This survey is a preliminary version of a chapter of the forthcoming book
``Geometry and Physics of Spatial Random Systems''
edited and partially written
by Daniel Hug, Michael Klatt, Klaus Mecke, Gerd Schr\"oder Turk and Wolfgang Weil.

The topic of this survey are geometric functionals of a Boolean model
(in Euclidean space) governed by a stationary
Poisson process of convex grains. The Boolean model is a fundamental benchmark of
stochastic geometry and continuum percolation. Moreover, it is often used to
model amorphous connected structures in physics, materials science and
biology.
Deeper insight
into the geometric and probabilistic properties of Boolean models and the dependence on
the underlying Poisson process can be gained by considering various
geometric functionals of Boolean models. Important examples are the intrinsic volumes and
Minkowski tensors.
We survey here local and asymptotic density (mean value)
formulas as well as second order properties and central limit theorems.
\end{abstract}

\noindent {\bf Keywords:}
Boolean model, geometric functionals, mean values, covariances,
central limit theorem

\smallskip

\noindent {\bf Mathematics Subject Classification (2000):} 60G55, 60H07

\section{Introduction}\label{chap5-sec:1}

Consider a finite or countably infinite number of points $X_n$
scattered randomly in $d$-dimen\-sio\-nal space $\R^d$. Then attach a
random particle (a nonempty compact set) $Z_n$ to each point $X_n$ which yields the grain
$Z_n+X_n$. The union
$$
Z:= \bigcup_{n} (Z_n+X_n)
$$
is then a random set in $\R^d$
which can serve as a model for spatial
structures in physics and other applied sciences (biology, geology,
mineralogy etc.). An important benchmark case arises  if the random points $X_n$
come from a stationary Poisson process $X$ and the sequence $(Z_n)$ is
independent of $X$ and formed by independent and identically
distributed random compact sets, mostly assumed to be convex.
Then the {\it particle process}
$$
Y := \{ Z_n+X_n : n\in \N\}
$$
is a stationary Poisson process on  the space  $\cC^{(d)}$ of nonempty compact subsets of $\R^d$.
The union set $Z$ is 
called a {\em stationary Boolean model} (with compact particles (grains)). Its
statistical properties are completely determined by two quantities,
the {\it intensity} $\gamma$ of the Poisson process $X$, a number
which we assume to be positive and finite, and the distribution
$\BQ$ {\em of the typical particle (grain)} $Z_1$, a probability measure on the class
$\cC_0^d$ of suitably centered particles in $\R^d$ (we may assume, for
example, that the particles have their circumcenter at the origin).
Fundamental properties of Boolean models are summarized in
\cite{SW2008,CSKM13,LastPenrose17}. It is often assumed that
the particles are convex, that is random elements
of the space $\cK^d$ of all compact and convex subsets of $\R^d$
equipped with the Hausdorff metric. We denote 
$\cK^{(d)}:=\cK^d\setminus\{\emptyset\}$.

Section \ref{chap5-sec:2} describes some basic properties
of the Boolean model. 
Starting with Subsection \ref{chap5-subsec:3.1}, we assume that
the distribution of the typical grain is concentrated on
$\cK^{(d)}$. We consider an additive (and measurable)  functional $\varphi$ on the convex ring $\mathcal{R}^d$,
the system of all finite unions of  compact convex sets and study the random
variable $\varphi(Z\cap W)$, where $W\in\mathcal{K}^{(d)}$.
Starting with Subsection \ref{secstatmean}  we assume that
the Poisson process $Y$ and hence also the Boolean model
$Z$ is stationary. If $\varphi$ satisfies 
a {\em translative integralgeometric principle}, 
then Theorem \ref{5-Th.samplingwindow} expresses the {\em local density}
$\BE \varphi(Z\cap W)$ in terms of expected mixed functionals of independent copies of the typical grain.
A first result of this type was obtained by Weil \cite{Weil90},
dealing with mixed measures associated with curvature measures; 
see also \cite[Theorem 9.1.5]{SW2008}.  Later the framework was extended 
to tensor valuations; see  \cite[Theorem 5.4]{HHKM2014}
and \cite{SchulteW2017}.  As illustrated with many examples,
our theorem generalizes and unifies  these results.
We also consider (asymptotic) geometric {\em densities} of $Z$ defined by the limit
\begin{align*}
\overline\varphi[Z]:=\lim_{r\to\infty} \frac{\BE \varphi(Z\cap rW)}{V_d(rW)},
\end{align*}
where $W\in\mathcal{K}^{(d)}$ has nonempty interior.  In fact,
Theorem  \ref{t333} treats  asymptotic densities in a very general situation. 
In Subsection \ref{chap5-subsubsec:3.3.2} we assume the Boolean model not only to be  stationary
but also isotropic. If $\varphi$ satisfies
a {\em  kinematic integralgeometric principle}, then
Theorem \ref{5-Th.samplingwindowiso} expresses
the local 
and the asymptotic  densities of $\varphi$ in terms
of the intensity and the mean intrinsic volumes of the typical grain.
Special cases of these results were  first discovered by Miles 
\cite{Miles76} and Davy \cite{Davy76}; see \cite[Section 9.1]{SW2008}
for an extensive discussion.
Potentially, the theorem could be used to construct estimators
of  the intensity $\gamma$. In particular, the classical results show that in the stationary and isotropic case, 
the densities of the intrinsic volumes of the Boolean model determine the intensity of the underlying 
particle process completely.  Theorem \ref{5-mixedvol}
(taken from \cite{HW2019}) generalizes these ideas and establishes a corresponding uniqueness 
result in a non-isotropic situation.

Section \ref{secvariance} summarises some of the results from
\cite{HLS16,HKLS2017} on  second order properties
of geometric (i.e. additive, locally bounded and translation invariant)
functionals $\varphi$ of the Boolean model.
Variances and covariances  are much harder to analyse
than mean values. Even in the isotropic case explicit formulas
are rather rare. Still it is possible to exploit the
Fock space representation from \cite[Theorem 18.6]{LastPenrose17}
to derive with Theorem \ref{thm:CovariancesGeneral} a series representation
for the asymptotic covariance
\begin{align*}
\sigma(\varphi,\psi):=\lim_{r\to\infty}\frac{\CV(\psi(Z\cap rW),\varphi(Z\cap rW))}{V_d(rW)}
\end{align*}
between two geometric functionals $\phi$ and $\psi$. The formulas involve geometric functionals
$\varphi^*$ and $\psi^*$, where $\varphi^*(K):=\BE \varphi(Z\cap K)-\varphi(K)$, $K\in\mathcal{K}^d$.
Without isotropy, these functionals
can be treated for volume and surface area; see Theorem \ref{tcovvolumesurface}.
The formulas are not completely explicit, but involve local volume and surface
covariances of the typical grain. Further progress can be made under an isotropy assumption.
Then Theorem \ref{tsigmaij} provides a formula for the asymptotic covariances
between the intrinsic volumes, involving another series representation.
In the planar case this leads to rather explicit formulas for the asymptotic covariance
structure of intrinsic volumes; see Theorem \ref{tasymplanar}.
Subsection \ref{subposvariance} presents simple assumptions guaranteeing
positivity of asymptotic variances. In the final Section \ref{SecCLT}
we present some central limit theorems from \cite{HLS16}, derived via
a combination of Stein's method with stochastic analysis tools for general Poisson
processes; see \cite{PSTU10,LastPenrose17}.

\section{Basic definitions and facts}
\label{chap5-sec:2}

Particle processes and germ-grain models can be introduced via a marking procedure
or by considering  processes of compact sets and their unions sets. For stationary
processes, the two approaches are essentially equivalent, but the latter viewpoint seems
to be preferable if a natural centering of the particles is not available.

\subsection{Particle processes and germ-grain models}
\label{chap5-subsec:2.2}

In the introduction 
we have used a marking procedure to
introduce a Boolean model. Instead we can directly start with
a Poisson process $Y$ on $\cC^{(d)}$ (equipped with the Borel
$\sigma$-field generated by the Hausdorff metric) with intensity
measure $\Theta\neq 0$ and defined on some given probability
space $(\Omega,\mathcal{A},\BP)$.
We assume throughout that
$\Theta$ is {\em locally finite}, that is,
\begin{align}\label{locallyfinite}
\Theta (\{K\in\cC^{(d)}:K\cap C\ne\emptyset\})<\infty,\quad C\in \cC^{d},
\end{align}
where $ \cC^{d}:=\cC^{(d)}\cup\{\emptyset\}$ denotes the set of all compact  subsets of $\R^d$. 
We also assume that $Y$ is {\em diffuse}, so that $Y$ can indeed be
identified with a random subset of $\cC^{(d)}$; see, e.g.,
\cite[Proposition 6.9]{LastPenrose17}. The point process $Y$ is
said to be a {\em Poisson particle process}.
The {\em Boolean model} $Z$ based on $Y$ is given by
\begin{align}\label{BooleanM}
Z:=\bigcup_{K\in Y}K.
\end{align}
It can be easily shown that
$\{Z\cap C\ne\emptyset\}$ is an event, that is, an element of $\mathcal{A}$
(see  \cite{LastPenrose17,SW2008}).
Moreover, it follows from \eqref{locallyfinite} that $Z$ is almost surely closed.
Therefore $Z$ is a {\em random closed set} in the sense of
\cite{SW2008,Molchanov17}. It follows directly from
the defining properties of a Poisson process that
\begin{align}\label{capacitygen}
\BP(Z\cap C=\emptyset)=\exp[-\Theta(\mathcal{C}_C)],
\quad C\in\mathcal{C}^d,
\end{align}
where $\cC_C:=\{K\in\cC^{(d)}:K\cap C\ne\emptyset\}$.
Since the {\em capacity functional} $C\mapsto\BP(Z\cap C\ne\emptyset)$
determines the distribution of $Z$ (see \cite{SW2008,Molchanov17}), it follows that
the distribution of $Z$ is determined by the values $\Theta(\mathcal{C}_C)$ for $C\in\mathcal{C}^d$.

\subsection{Stationarity and isotropy}
\label{chap5-subsec:2.3}

The Poisson process $Y$ is called {\em stationary} if
$Y+x:=\{K+x:K\in Y\}\overset{d}{=}Y$ for each $x\in\R^d$.
(This definition applies to general particle processes.)
Under the Poisson assumption, it follows from \eqref{capacitygen}
that $Y$ is stationary if and only if the intensity measure $\Theta$ of $Y$
is translation invariant, that is,
$\Theta(\{K:K+x\in\cdot\})=\Theta$ for each $x\in\R^d$.
By \cite[Theorem 4.1.1]{SW2008} the intensity
measure $\Theta$ has the representation
\begin{align}\label{Thetastat}
\Theta(\cdot) = \gamma \iint \I\{K+x\in\cdot\} \, dx \, \BQ(dK),
\end{align}
where $\gamma\in(0,\infty)$ is an {\em intensity} parameter and $\BQ$
is a probability measure on $\cC^{(d)}$ (the {\em grain distribution}) such that
\begin{equation}\label{eqn:AssumptionQ}
\int \lambda_d(K+C) \, \BQ(dK) < \infty, \quad C\in\cC^d.
\end{equation}
The intensity $\gamma$ is uniquely determined by $\Theta$.
Without loss of generality we can assume that
$\BQ$ is concentrated on the particles for which the center 
of the circumscribed ball is the origin. Then $\Theta$ determines
$\BQ$ as well. It is convenient to introduce
a {\em typical grain}, that is a random closed set $Z_0$ with distribution $\BQ$.

A Boolean model $Z$ (or a general random closed set) 
is said to be {\em stationary}, if $Z+x\overset{d}{=}Z$
for all $x\in\R^d$ (where $\overset{d}{=}$ means equality in distribution).
Essentially it follows from \eqref{capacitygen} that
this is the case if and only if
the intensity measure of the underlying Poisson particle process
$Y$ is translation invariant; see, e.g., \cite[Theorem 3.6.4]{SW2008} (and its proof).
A Boolean model $Z$ (or a general random closed set) 
is said to be {\em isotropic}, if $\vartheta Z\overset{d}{=}Z$
for all proper rotations $\vartheta$ of $\R^d$. In this case the intensity
measure of $Y$ is rotation invariant in the obvious sense.
If $Z$ is stationary it again follows from
\eqref{capacitygen} that $Z$ is isotropic if and only if
the grain distribution is isotropic (that is invariant under
proper rotations), provided that the center of the circumscribed ball is
chosen as the center function.

If $Z$ is stationary, then \eqref{capacitygen} can be written as
\begin{align}\label{capacitygenstat}
\BP(Z\cap C=\emptyset)=\exp[-\gamma\, \BE V_d(Z_0+C^*)],
\quad C\in\mathcal{C}^d,
\end{align}
where $C^*:=\{-x:x\in C\}$. In particular, we obtain that
\begin{align}\label{evolumefraction}
p:=\BP(x\in Z)=1-\exp[-\gamma\, \BE V_d(Z_0)],\quad x\in\R^d.
\end{align}
Fubini's theorem implies
\begin{align}\label{evolumef}
\BE\lambda_d(Z\cap B)=p\lambda_d(B),\quad B\in\cB^d,
\end{align}
which justifies to call $p$ the {\em volume fraction} of $Z$.

\subsection{Non-stationary structures}
\label{chap5-subsubsec:2.3.2}

For some applications, stationarity might be too strong an assumption.
A measure $\Theta$ on $\cC^{(d)}$ is said to be {\em translation regular} (see \cite{SW2008})
if there exist a measurable function $\eta\colon \R^d\times \cC^{(d)}\to[0,\infty)$
and a probability measure $\BQ$ on $\cC^{(d)}$
such that
\begin{align}\label{transregular}
\Theta=\iint \I\{K+x\in\cdot\}\eta (K,x)\, dx\,\BQ(dK).
\end{align}
The function $\eta$ and the measure $\BQ$ are not determined by
$\Theta$, nevertheless partial information may be determined, in
particular if $\eta$ depends only on the location $x$ and not on
$K$. In the following, we focus on the stationary framework, but
mention one particular result in Corollary \ref{5-Cor5.2}. Relations
between mean values of functionals of Boolean models and mean values
of the underlying particle process are described, for instance, in
\cite[Chapter 11]{SW2008}, \cite[Section 11.8]{SchulteW2017},
\cite[Section 6]{HW2019} and \cite{W2015,W2017}. In the non-stationary
setting, local densities are defined as Radon-Nikodym derivatives and
not via a limit involving an expanding observation window. Hence,
local densities are functions of the location in space where the
particles are distributed.

\section{Mean values}\label{chap5-sec:3}

In this section we fix  a Boolean model $Z$
as in \eqref{BooleanM}. We assume that the underlying Poisson
particle process $Y$ is concentrated on the system
$\cK^{(d)}$ of nonempty convex bodies. Under suitable moment assumptions,
Boolean models with more general (polyconvex) grains can be treated essentially in the same way.

Recall that we write $\cK^d$ for the family of all compact convex subsets (convex bodies) of $\R^d$. 
and that the {\em convex ring} $\mathcal{R}^d$ is the system 
of all finite (possibly empty) unions of convex bodies. Elements of $\mathcal{R}^d$ are called {\em polyconvex sets}.
By the proof of \cite[Theorem 14.4.4]{SW2008}, $\mathcal{R}^d$ is a Borel subset of $\mathcal{C}^{d}$.
A function $\varphi\colon\mathcal{R}^d\rightarrow \R$ is said to be {\em additive}
if $\varphi(\emptyset)=0$ and
\begin{equation}\label{eqadd}
\varphi(K\cup L)=\varphi(K)+\varphi(L)-\varphi(K\cap L)\quad\text{for all $K,L\in \mathcal{R}^d$.}
\end{equation}
A function $\varphi\colon\mathcal{K}^d\rightarrow \R$ is additive if
\eqref{eqadd} is true whenever $K,L,K\cup L\in\cK^d$. 
In this case it is no restriction of generality to assume that
$\varphi(\emptyset)=0$.
An additive
function on convex bodies is also called a {\em valuation}.  For an additive functional
$\varphi\colon\mathcal{R}^d\rightarrow \R$, measurability of $\varphi$
is equivalent to measurability of the restriction of $\varphi$ to
$\mathcal{K}^d$ (see again \cite[Theorem 14.4.4]{SW2008}).

\begin{remark}\label{rGroemer} \rm Assume that $\varphi\colon\mathcal{K}^d\to\R$ is additive
and continuous. A fundamental result by Groemer
(see \cite[Theorem 14.4.2]{SW2008}, and  \cite[Theorem 4.19]{HW2020} for a generalization) says that
$\varphi$ has a unique additive extension to  $\mathcal{R}^d$.
\end{remark}

Let $j\in\{0,\ldots,d\}$, and let $\varphi\colon\cK^{d}\to\R$ be a
function. Then $\varphi$ is said to be (positively) {\em
  $j$-homogeneous} (or homogeneous of degree $j$) if
$$
\varphi(cK)=c^j\varphi(K),\quad c> 0,\,K\in\cK^d.
$$
If $\varphi\colon\mathcal{R}^d\rightarrow \R$ is additive and the
restriction of $\varphi$ to $\mathcal{K}^d$ is $j$-homogeneous, then
$\varphi$ is $j$-homogeneous on $\mathcal{R}^d$ in the obvious sense.

The {\em intrinsic volumes} $V_0,\ldots,V_d$ are important examples of
continuous additive functionals. On $\cK^d$ they can be defined
by the {\em Steiner formula} (see \cite[Equation 14.5]{SW2008})
\begin{align}\label{steiner}
V_d(K+B^d_r)=\sum_{i=0}^d \kappa_{d-i} r^{d-i} V_i(K),\quad r\ge 0,\,K\in\cK^d,
\end{align}
where $B^d$ is the closed unit ball centered at the origin,
$B_r^d:=\{rx: x\in B^d\}$, $\kappa_i$ denotes the volume of the
$i$-dimensional unit ball and $\kappa_0:=1$.  By Remark \ref{rGroemer}
the intrinsic volumes can be extended to $\mathcal{R}^d$ in an
additive way.  The number $V_d(K)$ is the volume (Lebesgue measure) of
$K\in\cR^d$, while $V_0(K)$ is the {\em Euler characteristic} of
$K$. If $K$ has nonempty interior, then $V_{d-1}(K)$ is half the
surface area of $K$. The intrinsic volumes are rigid motion invariant
(in particular translation invariant), monotone increasing (with
respect to set inclusion) and $V_j$ is $j$-homogeneous. 
These properties are characteristic for the intrinsic volumes on
convex bodies. Due to their different degrees of homogeneity, the
intrinsic volumes are linearly independent functionals on convex
bodies. The values of the extension of $V_i$ to the convex ring can be
obtained by means of the inclusion-exclusion formula.

\subsection{The basic equation for additive functionals}
\label{chap5-subsec:3.1}

The following result is a slight generalization of Theorem~9.1.2 in
\cite{SW2008}.  For additive functions $\varphi$, it expresses the
mean value $\BE \varphi (Z\cap\nobreak K_0)$ in terms of iterated
integrals with respect to the intensity measure $\Theta$ of the
particle process $Y$ on $\cK^{(d)}$. Translation invariance of
$\varphi$ is not needed.  But we require an integrability property.

Let $\varphi\colon\mathcal{K}^d\to\R$ be a measurable function. Then
$\varphi$ is said to satisfy the integrability condition
$\mathbf{I}(\Theta)$, if for each $K_0\in\mathcal{K}^d$ there exists a
constant $c(K_0)\ge 0$ such that
\begin{align}\label{good}
\idotsint |\varphi(K_0\cap K_1\cap\cdots\cap K_k)|\,\Theta(dK_1)\cdots \Theta(dK_k)\le c(K_0)^k,\quad
k\in\N.\tag{$\mathbf{I}(\Theta)$}
\end{align}
If $\varphi$ is defined on $\mathcal{R}^d$ then the restriction to
$\mathcal{K}^d$ is required to satisfy $\mathbf{I(\theta)}$.

\begin{theorem}\label{5-Th1}
Suppose that $\varphi\colon\cR^d\to\R$  is measurable,  
additive and satisfies {\rm $\mathbf{I}(\Theta)$}. Let $K_0\in\cK^d$. Then
$\BE |{\varphi (Z\cap K_0)}|<\infty$ and
\begin{align}\label{5-basicint}
\BE \varphi (Z\cap K_0)&= \sum_{k=1}^\infty\frac{(-1)^{k-1}}{k!}\idotsint
    \varphi(K_0\cap K_1\cap\dots\cap K_k)\,\Theta(dK_1)\cdots \Theta(dK_k),
  \end{align}
where the series converges absolutely (also with $\varphi$ replaced by $|\varphi|$).
\end{theorem}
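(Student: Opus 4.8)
The plan is to expand the Boolean model inside the window using the inclusion--exclusion structure of additive functionals on the convex ring. Write $Z \cap K_0 = \bigcup_{K \in Y} (K \cap K_0)$, which is a finite union $\bigcup_{i=1}^{N} (K_i \cap K_0)$ of convex bodies, where the number of grains meeting $K_0$ is $N = Y(\mathcal{C}_{K_0})$, a Poisson random variable with parameter $\Theta(\mathcal{C}_{K_0}) < \infty$ by the local finiteness assumption \eqref{locallyfinite}. Since $\varphi$ is additive on $\mathcal{R}^d$, the inclusion--exclusion (Groemer) identity gives
\begin{align*}
\varphi(Z \cap K_0) = \sum_{\emptyset \neq I \subseteq \{1,\dots,N\}} (-1)^{|I|-1} \varphi\Big(K_0 \cap \bigcap_{i \in I} K_i\Big)
= \sum_{k=1}^{N} \frac{(-1)^{k-1}}{k!} \sum_{\substack{(i_1,\dots,i_k) \\ \text{distinct}}} \varphi(K_0 \cap K_{i_1} \cap \dots \cap K_{i_k}),
\end{align*}
where in the second form we sum over ordered tuples of distinct indices and divide by $k!$ to compensate.

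Next I would take expectations and use the multivariate Mecke equation for the Poisson process $Y$ (see \cite[Theorem 4.4]{LastPenrose17}), which states that for a nonnegative (or integrable) measurable function $g$ on $(\cC^{(d)})^k$,
\begin{align*}
\BE \sum_{\substack{(K_1,\dots,K_k) \in Y^k \\ \text{distinct}}} g(K_1,\dots,K_k) = \idotsint g(K_1,\dots,K_k)\, \Theta(dK_1) \cdots \Theta(dK_k).
\end{align*}
Applying this with $g(K_1,\dots,K_k) = \frac{(-1)^{k-1}}{k!}\varphi(K_0 \cap K_1 \cap \dots \cap K_k)$ and summing over $k$ yields exactly \eqref{5-basicint}. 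To make this rigorous one must justify interchanging expectation with the (a priori random, finite) sum over $k$ and with the Mecke identity; the clean way is to first do everything with $|\varphi|$ in place of $\varphi$, obtaining a series of nonnegative terms, and show it converges.

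The main obstacle is precisely this absolute convergence and the finiteness of $\BE|\varphi(Z \cap K_0)|$, and this is where the hypothesis $\mathbf{I}(\Theta)$ enters. Running the argument above with $|\varphi|$ and the Mecke formula for nonnegative integrands (no integrability needed there) gives
\begin{align*}
\BE \sum_{k=1}^{N} \frac{1}{k!} \sum_{\substack{(i_1,\dots,i_k) \text{ distinct}}} |\varphi(K_0 \cap K_{i_1} \cap \dots \cap K_{i_k})|
= \sum_{k=1}^{\infty} \frac{1}{k!} \idotsint |\varphi(K_0 \cap K_1 \cap \dots \cap K_k)|\, \Theta(dK_1) \cdots \Theta(dK_k) \le \sum_{k=1}^{\infty} \frac{c(K_0)^k}{k!} = e^{c(K_0)} - 1 < \infty.
\end{align*}
This simultaneously shows that the (random) inclusion--exclusion sum for $\varphi(Z \cap K_0)$ is absolutely summable almost surely, that $\BE|\varphi(Z \cap K_0)| < \infty$, that the series on the right-hand side of \eqref{5-basicint} converges absolutely, and — via dominated convergence / Fubini applied to the signed version — that the interchange of $\BE$, the sum over $k$, and the Mecke identity is legitimate. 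One should also note at the outset why $\varphi(Z \cap K_0)$ is a well-defined random variable: $Z \cap K_0 \in \mathcal{R}^d$ almost surely (finite union of convex bodies) and $\varphi$ is measurable on $\mathcal{R}^d$, while measurability of the map $\omega \mapsto \varphi(Z(\omega) \cap K_0)$ follows from the representation as a finite sum of measurable functions of the grains, as in \cite[Theorem 9.1.2]{SW2008}.
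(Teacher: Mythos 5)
Your proof is correct and follows essentially the same route as the paper's: expand $\varphi(Z\cap K_0)$ by inclusion--exclusion over the finitely many grains meeting $K_0$, rewrite as a sum over ordered distinct tuples with the $1/k!$ correction, apply the multivariate Mecke equation to pass to iterated $\Theta$-integrals, and justify the interchange of expectation and summation by first running the argument with $|\varphi|$ and bounding via $\mathbf{I}(\Theta)$. The only cosmetic difference is the citation (Theorem 4.4 versus the paper's Corollary 4.10 of Last--Penrose); the substance is identical.
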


\begin{proof} Almost surely the window $K_0$ is hit by only finitely many grains
$M_1,\dots ,M_\nu$ of the underlying particle process $Y$ (here
$\nu$ is a random variable). Since $\varphi$ is additive, the
inclusion-exclusion formula implies
\begin{align}
    \varphi (Z\cap K_0) &= \varphi \Bigl(\bigcup_{i=1}^\nu M_i\cap K_0\Bigr)\nonumber\\
    &= \sum_{k=1}^\nu (-1)^{k-1} \sum_{1\le i_1<\dots<i_k\le\nu}
\varphi (K_0\cap M_{i_1}\cap\dots\cap M_{i_k})\nonumber\\
    &= \sum_{k=1}^\infty \frac{(-1)^{k-1}}{k!}\sum_{(K_1,\dots,K_k)\in Y^k_{\not=}}
\varphi(K_0\cap K_{1}\cap\dots\cap K_{k}),\label{5-inclexcl}
  \end{align}
where $Y^k_{\not=}$ is the product process of all $k$-tupels of pairwise
distinct bodies in $Y$. Here we could extend the summation to infinity
since $\varphi(\emptyset)=0$.
	
After taking expectations of \eqref{5-inclexcl} we wish to swap expectation
and summation. By Fubini's theorem this is allowed, provided that
$$
\sum_{k=1}^\infty \frac{1}{k!}\BE \sum_{(K_1,\dots,K_k)\in Y^k_{\not=}}
|\varphi(K_0\cap K_{1}\cap\dots\cap K_{k})|<\infty.
$$
An application of \cite[Corollary 4.10]{LastPenrose17}, which is a simple consequence
of the multivariate Mecke equation, shows that the latter series equals
$$
\sum_{k=1}^\infty \frac{1}{k!}
\idotsint|\varphi(K_0\cap K_1\cap\dots\cap K_k)|\,\Theta(dK_1)\cdots \Theta(dK_k),
$$
which is finite by assumption $\mathbf{I}(\Theta)$. In particular, we have $\BE |{\varphi (Z\cap K_0)}|<\infty$.
Taking the expectation of \eqref{5-inclexcl} and repeating the argument,
we obtain \eqref{5-basicint}.
\end{proof}

\begin{remark}\label{r3.2ab}\rm
  The preceding proof still works with a slightly weaker integrability
  condition on $\varphi\colon\mathcal{R}^d\to\R$ than
  $\mathbf{I}(\Theta)$. In fact, it is sufficient to assume that for
  each $K_0\in\mathcal{K}^d$ and $k\in\N$ there is a constant
  $c_k(K_0)\ge 0$ such that
\begin{align}\label{good2b}
\idotsint |\varphi(K_0\cap K_1\cap\cdots\cap K_k)|\,\Theta(dK_1)\cdots \Theta(dK_k)\le c_0(K_0) k! c_k(K_0)^k
\end{align}
and
$$
\sum_{k\in\N}c_k(K_0)^k<\infty,
$$
that is, $c_k(K_0)^k$, $k\in\N$, is summable.
\end{remark}

\begin{remark}\label{r3.2}\rm
Suppose that $\varphi\colon \cR^d\to\R$ is {\em locally bounded}, that is,
\begin{align}\label{emar65}
\sup\{|\varphi(K)|: K\in\mathcal{K}^d,K\subset W\}<\infty,\quad W\in\mathcal{K}^d.
\end{align}
Hence, given $K_0\in \mathcal{K}^d$,
there exists a constant $c_0(K_0)$ such that $|\varphi (M)|\le c_0(K_0)$ for all $M\in\cK^d$
contained in $K_0$. Therefore for each $k\in\N$ we obtain
\begin{align*}
\idotsint &|\varphi(K_0\cap K_1\cap\cdots\cap K_k)|\,\Theta(dK_1)\cdots \Theta(dK_k)\\
&\le c_0(K_0)\idotsint \I\{K_0\cap K_1\ne\emptyset,\ldots,K_0\cap K_k\ne\emptyset\}\,\Theta(dK_1)\cdots \Theta(dK_k)\\
&= c_0(K_0)\Theta(\cC_{K_0})^k,
\end{align*}
which is finite by our basic assumption \eqref{locallyfinite}.
Hence $\varphi$ satisfies the  condition $\mathbf{I}(\Theta)$.
\end{remark}

Using the decomposition of the intensity measure $\Theta$ in the
translation regular case, we get the following result.
Here and later we use the notation $K^x:=K+x$
for $K\subset \R^d$ and $x\in\R^d$.

\begin{corollary}\label{5-Cor5.2} Let the assumptions
of Theorem \ref{5-Th1} be satisfied and assume
that $\Theta$ is translation regular as in \eqref{transregular}.
Then
\begin{align}\label{5-transregBM}
\BE \varphi (Z\cap K_0)&=
\sum_{k=1}^\infty\frac{(-1)^{k-1}}{k!}\idotsint F(K_0,K_1,\dots,K_k)\,\BQ(dK_1)\cdots \BQ(dK_k)
\end{align}
with
\begin{align*}
  F(K_0,K_1,\dots,K_k)
&:= \int \varphi (K_0\cap K_1^{x_1}\cap\dots \cap K_k^{x_k})
\prod_{i=1}^k\eta(K_i,x_i) \,d(x_1,\dots ,x_k).
\end{align*}
\end{corollary}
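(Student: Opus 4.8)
The plan is to substitute the representation \eqref{transregular} of the translation regular measure $\Theta$ directly into the series \eqref{5-basicint} of Theorem \ref{5-Th1} and then apply Fubini's theorem to disentangle the $x_i$-integrations from the $\BQ$-integrations. Since the series in \eqref{5-basicint} converges absolutely even after replacing $\varphi$ by $|\varphi|$, we are entitled to manipulate each $k$-fold integral separately, and the resulting rearrangement is justified term by term.

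First I would fix $k\in\N$ and look at the $k$-th summand
\[
\idotsint \varphi(K_0\cap K_1\cap\dots\cap K_k)\,\Theta(dK_1)\cdots\Theta(dK_k).
\]
Inserting \eqref{transregular} for each factor $\Theta(dK_i)$ turns $K_i$ into $K_i^{x_i}$ with the weight $\eta(K_i,x_i)$ and an additional Lebesgue integration $dx_i$; thus the summand becomes
\[
\idotsint\!\!\int \varphi(K_0\cap K_1^{x_1}\cap\dots\cap K_k^{x_k})\prod_{i=1}^k\eta(K_i,x_i)\,d(x_1,\dots,x_k)\,\BQ(dK_1)\cdots\BQ(dK_k),
\]
where I have already used Fubini to move the $d(x_1,\dots,x_k)$-integration inside, so that the inner integral is exactly $F(K_0,K_1,\dots,K_k)$. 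This gives \eqref{5-transregBM} once the change is made in every term of the series.

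The only point that needs care is the measurability and integrability bookkeeping required to apply Fubini at this step. Measurability of $(K_1,\dots,K_k)\mapsto F(K_0,K_1,\dots,K_k)$ follows from the joint measurability of $\eta$ and of $\varphi$ together with the measurability of the intersection map; and the interchange of the order of integration between the $x_i$ and the $K_i$ is licensed because, by assumption $\mathbf{I}(\Theta)$ (equivalently, by the absolute convergence already established in Theorem \ref{5-Th1}), the integral of the absolute value
\[
\idotsint\!\!\int |\varphi(K_0\cap K_1^{x_1}\cap\dots\cap K_k^{x_k})|\prod_{i=1}^k\eta(K_i,x_i)\,d(x_1,\dots,x_k)\,\BQ(dK_1)\cdots\BQ(dK_k)
\]
equals $\idotsint|\varphi(K_0\cap K_1\cap\dots\cap K_k)|\,\Theta(dK_1)\cdots\Theta(dK_k)\le c(K_0)^k<\infty$. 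Hence Fubini applies in each term, and summing over $k$ (with the factor $(-1)^{k-1}/k!$) reproduces \eqref{5-basicint} in the form \eqref{5-transregBM}.

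I do not expect any genuine obstacle here: the statement is essentially a bookkeeping corollary of Theorem \ref{5-Th1}, and the main thing to get right is simply to record that the absolute-convergence bound already proved there transfers verbatim to the disintegrated expression, so that the Fubini interchange — and therefore the identification of the inner $x$-integral with $F$ — is legitimate.
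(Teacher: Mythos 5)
Your proposal is correct and takes the same (essentially only) route as the paper, which gives no written proof beyond the remark that the corollary follows by substituting the decomposition \eqref{transregular} into \eqref{5-basicint}; your Fubini bookkeeping, justified by the absolute-convergence bound coming from $\mathbf{I}(\Theta)$, is exactly the argument that remark is gesturing at.
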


\subsection{The stationary case}\label{secstatmean}

From now on we assume that $Y$ and hence also the Boolean
model $Z$ is stationary. Therefore the intensity measure
$\Theta$ can be written in the form \eqref{Thetastat}.
Then Corollary \ref{5-Cor5.2} takes the following form.

\begin{corollary}\label{5-Cor5.22} Let the assumptions
of Theorem \ref{5-Th1} be satisfied and assume
that $Z$ is stationary. Then
\begin{align}\label{5-itint}\notag
\BE \varphi (Z\cap K_0)&=
\sum_{k=1}^\infty\frac{(-1)^{k-1}}{k!}\gamma^k\idotsint \varphi (K_0\cap K_1^{x_1}\cap\dots \cap K_k^{x_k})\\
&\qquad\qquad \times\,d(x_1,\dots ,x_k)\,\BQ(dK_1)\cdots \BQ(dK_k).
\end{align}
\end{corollary}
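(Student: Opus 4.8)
The plan is to derive Corollary \ref{5-Cor5.22} from Corollary \ref{5-Cor5.2} by specializing the translation regular representation \eqref{transregular} to the stationary form \eqref{Thetastat}. The starting point is that, under stationarity, Subsection \ref{chap5-subsec:2.3} tells us $\Theta$ is translation invariant and hence has the representation \eqref{Thetastat}; this is precisely \eqref{transregular} with the special choice $\eta(K,x) \equiv \gamma$ (constant in both arguments) and the grain distribution $\BQ$. Since the hypotheses of Theorem \ref{5-Th1} are assumed, Corollary \ref{5-Cor5.2} applies verbatim.

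First I would substitute $\eta(K_i,x_i) = \gamma$ into the integrand defining $F(K_0,K_1,\dots,K_k)$ in Corollary \ref{5-Cor5.2}. The product $\prod_{i=1}^k \eta(K_i,x_i)$ collapses to $\gamma^k$, which is a constant and can be pulled out of the $k$-fold spatial integral, so
\begin{align*}
F(K_0,K_1,\dots,K_k) = \gamma^k \int \varphi(K_0 \cap K_1^{x_1} \cap \dots \cap K_k^{x_k})\, d(x_1,\dots,x_k).
\end{align*}
Plugging this into \eqref{5-transregBM} and interchanging the (now constant) factor $\gamma^k$ with the $\BQ$-integrations yields exactly \eqref{5-itint}. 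The absolute convergence of the series is inherited directly from Corollary \ref{5-Cor5.2} (which in turn rests on $\mathbf{I}(\Theta)$ as established in Theorem \ref{5-Th1}), so no new convergence argument is needed; one only notes that all manipulations are rearrangements and Fubini-type interchanges within an absolutely convergent expression, hence legitimate.

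Honestly, there is no substantial obstacle here: the statement is a direct bookkeeping specialization of the preceding corollary, and the only thing to be a little careful about is confirming that the stationary intensity measure \eqref{Thetastat} really is an instance of the translation regular form \eqref{transregular} with constant density $\gamma$ — which is immediate from the definitions — and that the integrability hypothesis $\mathbf{I}(\Theta)$ is simply carried along unchanged. If one wanted to be fully self-contained rather than invoking Corollary \ref{5-Cor5.2}, the alternative is to go back to \eqref{5-basicint} in Theorem \ref{5-Th1} and insert \eqref{Thetastat} into each $\Theta(dK_i)$ directly, then apply Fubini to separate the spatial integrals from the $\BQ$-integrals; the absolute convergence guaranteed by Theorem \ref{5-Th1} justifies this interchange, and one arrives at \eqref{5-itint} by the same elementary reorganization.
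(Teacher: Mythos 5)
Your proof is correct and matches the paper's own (implicit) argument: the paper simply notes that in the stationary case the representation \eqref{Thetastat} is the translation-regular form \eqref{transregular} with $\eta\equiv\gamma$, so Corollary \ref{5-Cor5.2} specializes directly. The bookkeeping you give — pulling out the constant $\gamma^k$ and observing that absolute convergence is inherited from Theorem \ref{5-Th1} — is exactly what the reader is expected to supply.
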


It becomes apparent that a further investigation of
the mean values $\BE \varphi (Z\cap K_0)$
requires iterated translative integral formulas for additive
functionals $\varphi$.

Let $j\in\{0,\ldots,d\}$ and $k\in\N$. Then we denote by
$\operatorname{mix}(j,k)$ the set of all multi-indices
$\mathbf{m}=(m_1,\ldots,m_k)\in\{j,\ldots,d\}^k$ satisfying $m_1+\cdots+m_k=(k-1)d+j$.
A measurable function
$\varphi\colon\cK^{d}\to\R$ is said to satisfy
a {\em translative integralgeometric principle of order $j$} if
for each $k\in\N$ and each $\mathbf{m}\in\operatorname{mix}(j,k)$
there exists a measurable function $\varphi_\mathbf{m}\colon (\cK^d)^k\to \R$
such that $\varphi_\mathbf{m}(K_1,\cdot)$ is
for each $K_1\in\cK^d$ symmetric on $(\cK^d)^{k-1}$ (for $k=1$ this is
no assumption) and  the following two properties are satisfied.
For all $K_1,\ldots,K_k\in\cK^d$,
\begin{itemize}
\item
\begin{align}\label{5-mixedint-2}
\int\varphi(K_1\cap K_2^{x_2}\cap \dots \cap K_k^{x_k})\,d(x_2,\dots ,x_k)
& = \sum_{\mathbf{m}\in\operatorname{mix}(j,k)}\varphi_{\mathbf{m}}(K_1,\dots,K_k)
\end{align}
holds for $k\ge 2$ (in particular, the iterated translative integral is defined)
and
\item the decomposability property
\begin{align}\label{decompos}
\varphi_{(m_1,\dots, m_{k-1},d)}(K_1,\dots,K_k)
=\varphi_{(m_1,\dots, m_{k-1})}(K_1,\dots ,K_{k-1}) V_d(K_k)
\end{align}
holds whenever
$(m_1,\ldots,m_{k-1},d)\in\operatorname{mix}(j,k)$ and $k\ge 2$. 
We simply write $\varphi_j$ instead of $\varphi_{(j)}$.
\end{itemize}

We say that the condition $\mathbf{IP}(\BQ)$  holds for the translative integralgeometric principle satisfied by $\varphi$ if
for each $K_0\in\mathcal{K}^d$ 
\begin{align}\label{good2}
\idotsint |\varphi_{(m,m_1,\ldots,m_s)}(K_0,K_1,\ldots,K_s)|\,\BQ(dK_1)\cdots \BQ(dK_s)<\infty
\end{align}
for $s\in\{1,\ldots,m-j\}$, $m\in \{j+1,\ldots,d\}$ and $m_1,\ldots,m_s\in \{j,\ldots,d-1\}$ such that the relation  
$m_1+\cdots+m_s=sd+j-m$ holds.

\medskip

The following remarks relate the preceding definition to the integrability condition $\mathbf{I}(\Theta)$.

\begin{remark}\label{remcrucial}\rm
If the functionals $\varphi_{\mathbf{m}}$ are all nonnegative, then condition $\mathbf{IP}(\BQ)$ is clearly implied by
the integrability condition $\mathbf{I}(\Theta)$ on $\varphi$.
\end{remark}

\begin{remark}\label{remcrucial2}\rm
In many cases of interest, the functionals $(K_1,\ldots,K_k)\mapsto \varphi_{\mathbf{m}}(K_1,\dots,K_k)$ are separately positively
homogeneous of degree $m_i$ (say) with respect to $K_i$ for $i=1,\ldots,k$. If we define $F:\mathcal{K}^k\to\R$ by
$$
F(K_1,\ldots,K_k):=\int\varphi(K_1\cap K_2^{x_2}\cap \dots \cap K_k^{x_k})\,d(x_2,\dots ,x_k),
$$
then the relations
$$
F(K_1,\ldots,K_k)=\sum_{\mathbf{m}\in\operatorname{mix}(j,k)}\varphi_{\mathbf{m}}(K_1,\dots,K_k)
$$
can be inverted by considering $r_iK_i$ for $i=1,\ldots,k$ and sufficiently many integers $r_i\ge 1$.
More specifically this means that for each $\mathbf{m}\in\operatorname{mix}(j,k)$ there are a finite set $\mathcal{I}\subset \N^k$ and constants
$\alpha_I(\mathbf{m})\in\R$ for $I\in\mathcal{I}$, independent of $K_1,\ldots,K_k$, such that
$$
\varphi_{\mathbf{m}}(K_1,\dots,K_k)=\sum_{(r_1,\ldots,r_k)\in\mathcal{I}}\alpha_{(r_1,\ldots,r_k)}(\mathbf{m})F(r_1K_1,\ldots,r_kK_k).
$$
In this situation it follows that condition \eqref{good2} is implied
by the condition \ref{good} for $\varphi$. (The existence of such an
inversion follows by iterating the usual Vandermonde inversion for
polynomials in a single variable.)
\end{remark}

\begin{remark}\label{rvaluation}\rm Suppose that $\varphi\colon\mathcal{K}^d\to\R$
  is additive (a valuation), translation invariant and continuous. By
  a result of McMullen \cite{McMullen80}, \cite[Theorem 6.3.5]{S2014},
  $\varphi$ can be uniquely decomposed into a sum
  $\varphi=\sum_{j=0}^d\varphi_j$, where
  $\varphi_j\colon\mathcal{K}^d\to\R$, $j\in\{0,\ldots,d\}$, is
  additive, translation invariant, continuous and homogeneous of
  degree $j$. Note that $\varphi_0$ is a constant and $\varphi_d$ is a
  multiple of the volume.

Assume now that $\varphi$ is a $j$-homogeneous, translation invariant,
continuous valuation, for some $j\in\{0,\ldots,d\}$. It was shown in
\cite {W2017} (see also \cite[Theorem 11.3]{SchulteW2017}) that
$\varphi$ satisfies a translative integralgeometric principle of order
$j$. Since in this situation the functionals $\varphi_{\mathbf{m}}$
have the homogeneity property described in Remark \ref{remcrucial2},
condition $\mathbf{IP}(\BQ)$ is implied by condition
$\mathbf{I}(\Theta)$ for $\varphi$. Moreover, in this case the mixed
functionals are completely symmetric with respect to all entries and
$\varphi_{(j,d)}(K_1,K_2)= \varphi(K)V_d(M)$, that is $\varphi_j=\varphi$.
\end{remark}

\begin{example}\label{einvolumes}\rm Let $j\in\{0,\ldots,d\}$. The intrinsic volume
  $\varphi=V_j$ is nonnegative and satisfies a translative
  integralgeometric principle of order $j$; see, e.g., \cite{W2017}
  and Remark \ref{rvaluation}.  Intrinsic volumes are monotone on
  $\mathcal{K}^d$ (with respect to set inclusion) and therefore
  locally bounded in the sense of Remark \ref{r3.2}.  Therefore the
  functionals $\varphi=V_j$ satisfy the condition
  $\mathbf{I}(\Theta)$. Remark \ref{rvaluation} shows that condition
  $\mathbf{IP}(\BQ)$ holds for the integralgeometric principle
  satisfied by $\varphi=V_j$ as well.
\end{example}

\begin{example}\label{genexamples}\rm
  Important examples of translation invariant, continuous valuations
  with a fixed degree of homogeneity have been discussed in
  \cite[Section 11.6]{SchulteW2017}. We mention them only briefly and
  refer to \cite{SchulteW2017} for further details:
\begin{itemize}
\item[(1)] Mixed volumes of the form
$$
\varphi(K):=V(K[j],M_{j+1},\ldots,M_d),\quad K\in \mathcal{K}^{(d)},
$$
where $j\in\{0,\ldots,d\}$ and
$M_{j+1},\ldots,M_d\in \mathcal{K}^{(d)}$ are fixed; see \cite[Section
3.3]{HW2020} for an introduction to mixed volumes as $d$-fold
Minkowski linear functionals $V:(\mathcal{K}^d)^d\to\R$.
\item[(2)] Centered support function values (evaluated at a fixed vector $u\in\R^d$)
$$\varphi(K):=h^*(K,u):=h_K(u)-\langle s(K),u\rangle,\quad K\in \mathcal{K}^{(d)},
$$
 where $j=1$ and $s:\mathcal{K}^{(d)}\to\R^d$ is the Steiner point map (see Example \ref{exsupportf} for further details).
\item[(3)] Integrals of continuous functions $f:S^{d-1}\to\R$ against an area measure
$$
\varphi(K):=\int_{S^{d-1}}f(u)\, S_{j}(K,du),\quad K\in \mathcal{K}^{(d)},
$$
where $j\in\{0,\ldots,d-1\}$. The area measure $S_j(K,\cdot)$ is a
finite Borel measure on the unit sphere that can be considered as a
local version of the intrinsic volume $V_j$ and may be introduced by a
local Steiner formula \cite[Theorem 4.10]{HW2020}.
\item[(4)] As a generalization of the preceding example, we briefly
  also mention integrals of flag measures (projection means of area
  measures). If $G(d,j)$ denotes the linear Grassmannian of
  $j$-dimensional linear subspaces of $\R^d$, then
  $F(d,j):=\{(u,L)\in S^{d-1}\times G(d,j):u\in L\}$ is a flag
  manifold and
$$
\psi_j(K,\cdot):=\int_{G(d,j+1)}\int_{S^{d-1}\cap L}\I\{(u,L^\perp\vee u)\in\cdot\}\, S_j^L(K|L,du)\, \nu_{j+1}(dL)
$$
denotes the $j$-th flag measure of $K$, which is a $j$-homogeneous
Borel measure on the flag space $F(d,d-j)$. Here
$L^\perp\in G(d,d-j-1)$ is the linear subspace orthogonal to
$L\in G(d,j+1)$, $L^\perp\vee u\in G(d,d-j)$ is the linear span of
$L^\perp$ and $u\in L$, $K|L$ denotes the orthogonal projection of $K$
to the subspace $L$ and $S_j^L(K|L,\cdot)$ is the $j$-th area measure
of $K|L$ with respect to $L$ as the ambient space. By the projection
formula for area measures, the image measure of $\Psi_j(K,\cdot)$
under projection onto the first component is proportional to
$S_j(K,\cdot)$. Thus
$$
\varphi(K):=\int_{F(d,d-j)}f(u,L)\, \psi_{j}(K,d(u,L)),\quad K\in \mathcal{K}^{(d)},
$$
for a given continuous function $f:F(d,d-j)\to\R$, generalizes the preceding example.
\end{itemize}
Each of these functionals $\varphi$ satisfies an integralgeometric
principle of order $j$ and the associated mixed functionals
$\varphi_{\mathbf{m}}$ are uniquely determined by $\varphi$. In
Example (2) we obtain translative integralgeometric formulas for a
function-valued valuation (the support function) by considering both
sides of the resulting equation for a fixed $u\in S^{d-1}$ as a
function of $u$. Similarly, if we consider the integralgeometric
formulas in Examples (3) and (4) for general continuous functions $f$
and apply the Riesz representation theorem, then we get
integralgeomtric relations for measure-valued valuations. In this way it can 
be seen that the resulting formulas can be applied to not necessarily continuous
functions.
\end{example}

The preceding Examples \ref{einvolumes} and \ref{genexamples} can be
generalized in another direction, thereby also dropping the assumption
of translation invariance.  To do so we need some further tools from
convex and integral geometry.  First we ask the reader to recall from
\cite{SW2008,S2014} the definition of the support measure
$\Lambda_j(K,\cdot)$, $K\in\mathcal{K}^d$, for $j\in\{0,\ldots,d\}$.
These are finite measures on $\R^d\times\mathbb{S}^{d-1}$, where
$\mathbb{S}^{d-1}$ denotes the unit sphere in $\R^d$.  The total mass
is given by $\Lambda_j(K,\R^d\times \mathbb{S}^{d-1})=V_j(K)$. In the
following, we call a function
$f\colon\R^d\times\mathbb{S}^{d-1}\to \R$ {\em locally bounded} if
$|f|$ is bounded on sets of the form $B\times \mathbb{S}^{d-1}$, where
$B\subset\R^d$ is bounded.  We now use \cite[Theorem 3.14]{Hug1999}
(see also \cite[Theorem 3.1]{HHKM2014}).  By this result, for each
$k\ge 2$ and each measurable, locally bounded function
$f\colon\R^d\times\mathbb{S}^{d-1}\to \R$, we have
\begin{align}\label{mixedsupport}\notag
\iint f(x_1,u)&\,\Lambda_j(K_1\cap K^{x_2}_2\cap\cdots\cap K^{x_k}_k,d(x_1,u))\, d(x_2,\ldots,x_k)\\
&=\sum_{\mathbf{m}\in\operatorname{mix}(j,k)}
\int f(x_1,u)\, \Lambda^{(j)}_{\mathbf{m}}(K_1,\ldots,K_k,d(x_1,\ldots,x_k,u)),
\end{align}
where the $\Lambda^{(j)}_{\mathbf{m}}(K_1,\ldots,K_k,\cdot)$, $\mathbf{m}\in\operatorname{mix}(j,k)$,
are finite measures on
$(\R^d)^k\times\mathbb{S}^{d-1}$, the {\em mixed support measures}
associated with $K_1,\ldots,K_k\in\cK^d$.
We do not list all the nice properties of these measures,
but refer to the above sources.

\begin{example}\label{einvolumes2}\rm
Let $f\colon \R^d\times \mathbb{S}^{d-1}\to\R$ be a measurable, locally bounded function.
Let $j\in\{0,\ldots,d-1\}$ and
define $\varphi\colon\mathcal{R}^d\to \R$ by
\begin{align}\label{e3.12}
\varphi(K):=\int f(x,u)\,\Lambda_j(K,d(x,u)),
\end{align}
where we use the additive extension of $K\mapsto\Lambda_j(K,\cdot)$
from $\mathcal{K}^d$ to $\mathcal{R}^d$. If $f$ is independent of $x$,
then $\varphi$ is translation invariant and the integration with
respect to $\Lambda_j(K,\cdot)$ can be replaced by an integration with
respect to the $j$-th area measure $S_j(K,\cdot)$ of $K$. If $f$ is
independent of $u$, we can include $j=d$ in the range of $j$ and then
$\Lambda_d(K,\cdot)$ is replaced by $d$-dimensional Lebesgue measure
restricted to $K$.  For this example we restrict the discussion to
$j\le d-1$ though.  Thanks to \eqref{mixedsupport} and the properties
of the mixed support measures (see \cite[Theorem 3.14]{Hug1999}),
$\varphi$ satisfies a translative integralgeometric principle of order
$j$ with
\begin{align}\label{e3.13}
\varphi_{\mathbf{m}}(K_1,\dots,K_k)=
\int f(x_1,u)\,\Lambda^{(j)}_{\mathbf{m}}(K_1,\ldots,K_k,d(x_1,\ldots,x_k,u))
\end{align}
for $\mathbf{m}\in\operatorname{mix}(j,k)$ and $k\in\N$. Even though
$\Lambda^{(j)}_{\mathbf{m}}$ has a natural symmetry property,
$\varphi_\mathbf{m}$ is not symmetric in all of its arguments, but
only with respect to $K_2,\ldots,K_k$. Note that $\varphi$ is in
general not $j$-homogeneous and not translation invariant. Instead we
have
$$
\varphi(\lambda K)=\int f(x,u)\,\Lambda_j(\lambda K,d(x,u))=\lambda^j \int f(\lambda x,u)\,\Lambda_j( K,d(x,u)).
$$
Since $\Lambda_j(K,\R^d\times \mathbb{S}^{d-1})=V_j(K)$
it follows again from Remark \ref{r3.2} or directly from the monotonicity of $V_j$
(and the triangle inequality) that $\varphi$ satisfies the integrability condition  $\mathbf{I}(\Theta)$.
We now argue that condition $\mathbf{IP}(\BQ)$ holds for this integralgeometric principle satisfied by $\varphi$.
By the integral representation \eqref{e3.13} and
the triangle inequality we can assume that $f\ge 0$.
Then all the numbers $\varphi_{\mathbf{m}}(K_1,\dots,K_k)$ are nonnegative,
so that \ref{good2} follows from \eqref{5-mixedint-2}
and the fact that $\varphi$ satisfies $\mathbf{I}(\Theta)$.

This example includes the tensor valuations
(see \cite{HHKM2014,schroederturk2013a}) as a special case.
\end{example}

We need to introduce some notation.
For $r\in\N$ and a
measurable function $\psi$ on $(\cK^d)^r$, whenever the following integrals exist  we set
\begin{align*}
\overline \psi[K,Y]
&:=\gamma^{r-1} \int \psi(K,K_1\ldots ,K_{r-1})\,\BQ^{r-1}(d(K_{1}.\ldots,K_{r-1})),\quad K\in\mathcal{K}^d,\,r\ge 2,\\
\overline \psi[Y]&:=\gamma^r \int \psi(K_1,\ldots ,K_r)\,\BQ^r(d(K_{1},\ldots,K_{r})).
\end{align*}
The next result extends of Corollary 6.3 in \cite{W2017};
see also \cite[p. 390]{SW2008}. By the forthcoming
Corollary \ref{c3.16}  and Example \ref{extensor}
it also generalizes \cite[Theorem 5.4]{HHKM2014}.
The existence of the mean values $\overline\varphi_{(m,\bf{m})}[K_0,Y]$
in the following theorem is ensured by the condition $\mathbf{IP}(\BQ)$.

\begin{theorem}\label{5-Th.samplingwindow} Assume that $Z$ is stationary.
Let $\varphi\colon\cR^{d}\to\R$ be measurable and additive. Let $j\in\{0,\ldots,d\}$.
Assume that $\varphi$ satisfies a translative integralgeometric principle of order $j$
 such that the conditions  $\mathbf{I}(\Theta)$ and $\mathbf{IP}(\BQ)$ hold.
Then, for any $K_0\in\cK^d$,
\begin{align}\label{5-samplingwindow}
\BE \varphi(Z\cap K_0)
&= \varphi_j(K_0)\big(1-\E^{-\overline V_d[Y]}\big) \\ \notag
&\quad +\E^{-\overline V_d[Y]}\sum_{m=j+1}^d \sum_{s=1}^{m-j}\frac{(-1)^{s-1}}{s!}
\sum_{\substack{m_1,\dots ,m_s=j\\ m_1+\cdots +m_s=sd+j-m}}^{d-1}
 \overline\varphi_{(m,\bf{m})}[K_0,Y],
\end{align}
where $\mathbf{m}=(m_1,\ldots,m_s)$ in the inner summation on the right-hand side.
\end{theorem}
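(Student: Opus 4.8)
The plan is to start from the iterated translative integral formula for $\BE\varphi(Z\cap K_0)$ provided by Corollary~\ref{5-Cor5.22}, namely
\begin{align*}
\BE \varphi (Z\cap K_0)&=
\sum_{k=1}^\infty\frac{(-1)^{k-1}}{k!}\gamma^k\idotsint \varphi (K_0\cap K_1^{x_1}\cap\dots \cap K_k^{x_k})\\
&\qquad\qquad \times\,d(x_1,\dots ,x_k)\,\BQ(dK_1)\cdots \BQ(dK_k),
\end{align*}
and then to evaluate the inner $k$-fold translative integral using the integralgeometric principle of order $j$. For fixed $K_1,\ldots,K_k$, the integral over $x_1$ alone contributes a factor $V_d(K_1)$ by translation invariance of Lebesgue measure (writing $K_0\cap K_1^{x_1}\cap\cdots = K_0\cap(K_1\cap K_2^{x_2-x_1}\cap\cdots)^{x_1}$ and integrating out $x_1$ after substituting $K_0$ for the first body). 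The remaining $(k-1)$-fold integral over $(x_2,\ldots,x_k)$ is exactly of the form \eqref{5-mixedint-2} with $K_0$ playing the role of the first body, so it equals $\sum_{\mathbf{m}\in\operatorname{mix}(j,k)}\varphi_{\mathbf{m}}(K_0,K_1,\ldots,K_{k-1})$ — wait, more carefully: I must shift the indexing so that the $k$-fold integral with window $K_0$ becomes $\sum_{\mathbf{m}\in\operatorname{mix}(j,k+1)}\varphi_{\mathbf{m}}(K_0,K_1,\ldots,K_k)$ after pulling out the $x_1$-translation (this requires being a bit careful about whether the principle is stated with or without the window being one of the $k$ bodies). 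The upshot is that each term in the $k$-sum becomes a sum over multi-indices $\mathbf{m}=(m_0,m_1,\ldots,m_k)$ with $m_0+m_1+\cdots+m_k = kd+j$ and each $m_i\in\{j,\ldots,d\}$, of $\gamma^k\int\varphi_{\mathbf{m}}(K_0,K_1,\ldots,K_k)\,\BQ^k(d(K_1,\ldots,K_k))$, which in the notation introduced before the theorem is $\overline{\varphi_{\mathbf{m}}}[K_0,Y]$ times appropriate powers of $\gamma$ absorbed into the bar notation.

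The second and decisive step is to separate out the contributions coming from indices $m_i=d$. By the decomposability property \eqref{decompos}, whenever a later index $m_i$ equals $d$ the functional $\varphi_{\mathbf{m}}$ factors off a $V_d(K_i)$, and integrating that factor against $\BQ$ produces $\BE V_d(Z_0)$, i.e. a factor $\overline V_d[Y]/\gamma$ per such index (again, powers of $\gamma$ are bookkept by the bar notation). I plan to organize the double sum over $k$ and over $\mathbf{m}\in\operatorname{mix}(j,k+1)$ by first choosing the set of indices that equal $d$: if the "essential" part of the multi-index, after stripping all the $d$'s, is $(m_0,m_1,\ldots,m_s)=(m,\mathbf{m})$ with $m\le d$ (and in fact $m\ge j$, $m_i\le d-1$), then the constraint becomes $m_1+\cdots+m_s = sd+j-m$, matching exactly the inner summation in \eqref{5-samplingwindow}. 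Summing the geometric-type series in the number of stripped $d$-indices produces the exponential factors: the terms with $m=j$ (so $\varphi_j(K_0)$, since the only multi-index in $\operatorname{mix}(j,k)$ with $s=0$ essential part gives $\varphi_j$) resum to $\varphi_j(K_0)\sum_{k\ge1}\frac{(-1)^{k-1}}{k!}(\overline V_d[Y])^k = \varphi_j(K_0)(1-\E^{-\overline V_d[Y]})$, while for each fixed $(m,\mathbf{m})$ with $m\ge j+1$ the sum over the number $\ell\ge0$ of appended $d$-indices gives $\sum_{\ell\ge0}\frac{(-1)^{s+\ell-1}}{(s+\ell)!}\binom{s+\ell}{\ell}(\overline V_d[Y])^\ell\,\overline{\varphi_{(m,\mathbf{m})}}[K_0,Y]$, and the combinatorial factor collapses so that this equals $\frac{(-1)^{s-1}}{s!}\E^{-\overline V_d[Y]}\,\overline{\varphi_{(m,\mathbf{m})}}[K_0,Y]$, which is precisely the second block of \eqref{5-samplingwindow}.

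Throughout, the justification for all the rearrangements — swapping $\sum_k$ with the sums over multi-indices, pulling the exponential resummation out, and Fubini between the $\BQ$-integrals and the translative integrals — rests on the absolute convergence already guaranteed by Theorem~\ref{5-Th1} under $\mathbf{I}(\Theta)$, together with the finiteness of the individual mean values $\overline{\varphi_{(m,\mathbf{m})}}[K_0,Y]$ ensured by $\mathbf{IP}(\BQ)$; I would pass to $|\varphi|$ and nonnegative $\varphi_{\mathbf{m}}$ (as in Example~\ref{einvolumes2}, reducing to $f\ge0$ where relevant, or just invoking the triangle inequality and that the mixed functionals in the homogeneous case are sign-definite after the Vandermonde inversion of Remark~\ref{remcrucial2}) to see that every rearranged series converges absolutely, so the formal manipulations are legitimate. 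The main obstacle I anticipate is purely bookkeeping: carefully tracking how a multi-index $\mathbf{m}\in\operatorname{mix}(j,k+1)$ decomposes into its $d$-entries and its essential part, getting the binomial coefficient $\binom{s+\ell}{\ell}$ right (it counts the placements of the $d$'s among the later slots, since the decomposability \eqref{decompos} only peels off a $d$ in the \emph{last} position but symmetry in the arguments $K_1,\ldots,K_k$ lets one reorder), and verifying that $\sum_{\ell\ge0}\frac{(-1)^\ell}{\ell!}\frac{(\overline V_d[Y])^\ell}{s!}\cdot\frac{s!}{(s+\ell)!}\binom{s+\ell}{\ell} = \frac{1}{s!}\E^{-\overline V_d[Y]}$ indeed holds — which it does because $\binom{s+\ell}{\ell}/(s+\ell)! = 1/(s!\,\ell!)$. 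Once this combinatorial identity is in hand the two exponential factors $\E^{-\overline V_d[Y]}$ and $1-\E^{-\overline V_d[Y]}$ appear automatically and the theorem follows.
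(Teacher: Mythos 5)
Your proposal is correct and follows the paper's proof essentially verbatim: expand via Corollary~\ref{5-Cor5.22}, identify the $k$-fold translative integral with the sum over $\operatorname{mix}(j,k+1)$ using \eqref{5-mixedint-2} with $K_0$ as the first (non-symmetric) argument, strip the $d$-entries from the later positions via symmetry and the decomposability \eqref{decompos}, and resum the series in the number of stripped entries to produce the exponential factors. (One small slip in the write-up: the final ``verification'' display carries a spurious extra factor $1/\ell!$; the preceding line, which uses $\binom{s+\ell}{\ell}/(s+\ell)!=1/(s!\,\ell!)$ to collapse the sum to $(-1)^{s-1}\E^{-\overline V_d[Y]}/s!$, is the correct identity and is exactly the paper's computation with $k=s+\ell$.)
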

\begin{proof} Since $\varphi$ satisfies $\mathbf{I}(\Theta)$, we can apply
Corollary \ref{5-Cor5.22}. Together with \eqref{5-mixedint-2} this yields
\begin{align}\label{5-3.3-expform}
\BE &\varphi(Z\cap K_0)\nonumber\\
&= \sum_{k=1}^\infty
\frac{(-1)^{k-1}}{k!}\gamma^k\int\cdots \int  \varphi(K_0\cap K_1^{x_1}\cap\dots \cap K_k^{x_k})\, d(x_1,\ldots,x_k)
\,\BQ(dK_1)\cdots \BQ(dK_k)\nonumber\\
&= \sum_{k=1}^\infty
\frac{(-1)^{k-1}}{k!}\gamma^k\int\cdots \int \sum_{\mathbf{m'}\in\operatorname{mix}(j,k+1)}
\varphi_{\mathbf{m'}}(K_0,K_1,\dots,K_k)
\,\BQ(dK_1)\cdots \BQ(dK_k)\nonumber\\
&= \sum_{k=1}^\infty
\frac{(-1)^{k-1}}{k!}\gamma^k \sum_{m=j}^d\ \idotsint \sum_{\mathbf{m}\in\operatorname{mix}(d-m+j,k)}
\varphi_{(m,\mathbf{m})}(K_0,K_1,\dots,K_k)\nonumber\\
&\qquad\qquad\qquad\qquad\qquad\qquad\qquad\qquad\qquad\qquad\times\BQ(dK_1)\cdots \BQ(dK_k).
\end{align}
By \ref{good2} this series converges absolutely.
In the case $j=d$, we also have $m=d$ and  the set $\operatorname{mix}(d,k)$
contains the single element $(d,\ldots,d)$, hence repeated application of \eqref{decompos}
yields
$$
\varphi_{(d,\ldots,d)}(K_0,K_1,\dots,K_k)=\varphi_d(K_0)V_d(K_1)\cdots V_d(K_k).
$$
Then \eqref{5-3.3-expform}, for $j=d$, reduces to
\begin{align*}
\BE \varphi(Z\cap K_0)=\varphi_d(K_0)\sum_{k=1}^\infty\frac{(-1)^{k-1}}{k!} (\overline V_d[Y])^k,
\end{align*}
that is
\begin{align}\label{densvolume}
\BE\varphi(Z\cap K_0)=\varphi_d(K_0)\big(1-\E^{-\overline V_d[Y]}\big).
\end{align}

For $j<d$ and $m=j$, we can proceed similarly to obtain that
\begin{align*}
\BE \varphi(Z\cap K_0)
&=\varphi_j(K_0) \big(1-\E^{-\overline V_d[Y]}\big)\\
&\quad+\sum_{k=1}^\infty
\frac{(-1)^{k-1}}{k!}\gamma^k \sum_{m=j+1}^d\ \idotsint S_{k,m}(K_0,\ldots,K_k)\,\BQ(dK_1)\cdots \BQ(dK_k),
\end{align*}
where again \eqref{decompos} was used and
\begin{align*}
S_{k,m}(K_0,\ldots,K_k)
:=\sum_{\mathbf{m}\in\operatorname{mix}(d-m+j,k)}
\varphi_{(m,\mathbf{m})}(K_0,K_1,\dots,K_k).
\end{align*}
Here, we have $\mathbf{m}= (m_1,\dots ,m_k)$ with
$m_1+\dots +m_k=kd+j-m$. Since $m>j$, there must be indices $m_i$
which are smaller than $d$. The number $s$ of these indices ranges
from $1$ to $m-j$, since
$$
kd-(m-j)=m_1+\cdots+m_k=m_1+\cdots+m_s+(k-s)d\le s(d-1)+(k-s)d.
$$
We rearrange the arguments of $S_{k,m}$ according to this number $s$ and
use the symmetry and the decomposability property \eqref{decompos}
of $\varphi_{(m,\mathbf{m})}$. We obtain
\begin{align*}
&S_{k,m}(K_0,\ldots,K_k)\\
&= \sum_{s=1}^{k\wedge (m-j)} \binom{k}{s}\sum_{\substack{m_1,\dots ,m_s=j\\ m_1+\cdots +m_s=sd+j-m}}^{d-1}
\varphi_{(m,m_1,\dots ,m_s)}(K_0,K_1,\dots,K_s) V_d(K_{s+1})\cdots V_d(K_k).
\end{align*}
Thus we get
\begin{align*}
& \sum_{k=1}^\infty\frac{(-1)^{k-1}}{k!}\gamma^k \sum_{m=j+1}^d\
\idotsint S_{k,m}(K_0,\ldots,K_k)\,\BQ(dK_1)\cdots \BQ(dK_k)   \\
&= \sum_{k=1}^\infty
\frac{(-1)^{k-1}}{k!} \sum_{m=j+1}^d\ \sum_{s=1}^{k\wedge (m-j)} \binom{k}{s}
\sum_{\substack{m_1,\dots ,m_s=j\\ m_1+\cdots +m_s=sd+j-m}}^{d-1}
\overline\varphi_{(m,\bf{m})}[K_0,Y] \overline V_d[Y]^{k-s}\\
&= \sum_{m=j+1}^d \sum_{s=1}^{m-j} \sum_{r=0}^\infty\frac{(-1)^{r+s-1}}{r!s!}\overline V_d[Y]^{r}
\sum_{\substack{m_1,\dots ,m_s=j\\ m_1+\cdots +m_s=sd+j-m}}^{d-1} \overline\varphi_{(m,\bf{m})}[K_0,Y].
\end{align*}
This gives the assertion \eqref{5-samplingwindow}.
\end{proof}

In the special $j=d$ equation \eqref{5-samplingwindow}
simplifies to \eqref{densvolume}.
The special case $j=d-1$ is also worth mentioning.
It extends \cite[Theorem 5.2]{HHKM2014} significantly.

\begin{corollary}\label{csurface} Let the assumptions of
Theorem \ref{5-Th.samplingwindow} be satisfied in the case $j=d-1$. Then
\begin{align}\label{denssurface}
  \BE \varphi(Z\cap K_0) &= \varphi_{d-1}(K_0)\big(1-\E^{-\overline V_d[Y]}\big)
+ \E^{-\overline V_d[Y]} \overline \varphi_{d,d-1}[K_0,Y].
\end{align}
\end{corollary}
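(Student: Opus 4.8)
The plan is simply to specialize Theorem \ref{5-Th.samplingwindow} to the case $j=d-1$ and to check that the three nested sums on the right-hand side collapse to a single summand. All hypotheses needed --- measurability and additivity of $\varphi$, the translative integralgeometric principle of order $j=d-1$, and the conditions $\mathbf{I}(\Theta)$ and $\mathbf{IP}(\BQ)$ --- are precisely those assumed in the corollary, so no extra verification is required and formula \eqref{5-samplingwindow} applies verbatim with $j=d-1$.

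Next I would inspect the ranges of the summation indices. The outer sum runs over $m\in\{j+1,\dots,d\}=\{d\}$, so only $m=d$ contributes. For this $m$ the middle sum runs over $s\in\{1,\dots,m-j\}=\{1\}$, so only $s=1$ contributes. Finally, for $m=d$ and $s=1$ the inner sum is over $m_1$ in the index set $\{j,\dots,d-1\}=\{d-1\}$, which is a singleton, and the constraint $m_1+\cdots+m_s=sd+j-m$ here reads $m_1=d+(d-1)-d=d-1$ and is automatically satisfied. Hence the triple sum in \eqref{5-samplingwindow} reduces to the single term indexed by $m=d$, $s=1$, $\mathbf{m}=(d-1)$.

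It then remains to read off the coefficients. The prefactor is $(-1)^{s-1}/s!=(-1)^{0}/1!=1$, the leading term is $\varphi_j(K_0)=\varphi_{d-1}(K_0)$, and the surviving summand is $\overline\varphi_{(d,d-1)}[K_0,Y]$, which in the notation of the corollary is written $\overline\varphi_{d,d-1}[K_0,Y]$. Substituting these values into \eqref{5-samplingwindow} gives exactly \eqref{denssurface}. The only ``obstacle'' here is bookkeeping: one must confirm that the singleton index sets and the homogeneity constraint are compatible, which they are; there is no genuine mathematical difficulty, the content being entirely contained in Theorem \ref{5-Th.samplingwindow}. (Alternatively, one could re-run the computation in the proof of that theorem directly with $j=d-1$, where the step splitting off the indices $m_i<d$ via \eqref{decompos} produces at most one non-volume factor, but this merely reproduces the same specialization.)
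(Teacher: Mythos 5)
Your proof is correct and is exactly the argument the paper leaves implicit (the corollary is stated without proof, being a direct specialization of Theorem \ref{5-Th.samplingwindow}). The index bookkeeping is right: with $j=d-1$ the ranges collapse to $m=d$, $s=1$, $m_1=d-1$, the constraint $m_1=sd+j-m=d-1$ is automatically met, the prefactor is $1$, and substituting gives \eqref{denssurface}.
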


Next we specialize Theorem \ref{5-Th.samplingwindow}  to the
case discussed in Example \ref{einvolumes2}. To this end we introduce
for  $i\in\N$ and
$\mathbf{m}=(m_1,\ldots,m_i)\in\N_0^i$ the mean measures
\begin{align*}
\bar\Lambda^{(j)}_{\mathbf{m}}(K_0,Y,\cdot)
:=\gamma^{i-1} \idotsint \Lambda^{(j)}_{\mathbf{m}}(K_0,K_1,\ldots,K_{i-1},\cdot)\,\BQ(dK_{1})\cdots \BQ(dK_{i-1}),
\quad K_0\in\mathcal{K}^d,
\end{align*}
if $i\ge 2$ and
\begin{align*}
\bar\Lambda^{(j)}_{\mathbf{m}}(Y,\cdot)
:=\gamma^{i} \idotsint \Lambda^{(j)}_{\mathbf{m}}(K_0,K_1,\ldots,K_{i-1},\cdot)\,\BQ(dK_{0})\cdots \BQ(dK_{i-1}).
\end{align*}
The next corollary generalizes Theorem \cite[Theorem 5.4]{HHKM2014}.

\begin{corollary}\label{c3.16} Assume that $\varphi$ is given as in
Example \ref{einvolumes2}. Then \eqref{5-samplingwindow} holds
with $\varphi_j=\varphi$,
\begin{align*}
 \overline\varphi_{(m,\mathbf{m})}[K_0,Y]
=\int f(x_0,u)\bar\Lambda^{(j)}_{(m,\mathbf{m})}(K_0,Y,d(x_0,\ldots,x_s,u)),
\end{align*}
for $\mathbf{m}\in\N_0^s$, $s\in\N$ and $m\in\{0,\ldots,d-1\}$, and
\begin{align*}
 \overline\varphi_{(d,\mathbf{m})}[K_0,Y]
=\iint \I\{x_0\in K_0\}f(x_0,u)\,dx_0\,\bar\Lambda^{(j)}_{\mathbf{m}}(Y,d(x_1,\ldots,x_s,u)).
\end{align*}
\end{corollary}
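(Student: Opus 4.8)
The plan is to simply substitute the explicit description of the mixed functionals from Example \ref{einvolumes2} into the general formula \eqref{5-samplingwindow} of Theorem \ref{5-Th.samplingwindow}, and to rewrite the resulting iterated $\BQ$-integrals in terms of the mean mixed support measures $\bar\Lambda^{(j)}_{\mathbf{m}}$. The hypotheses needed for Theorem \ref{5-Th.samplingwindow} are already verified in Example \ref{einvolumes2}: the functional $\varphi$ in \eqref{e3.12} satisfies a translative integralgeometric principle of order $j$ with the mixed functionals $\varphi_{\mathbf m}$ given in \eqref{e3.13}, and both $\mathbf{I}(\Theta)$ and $\mathbf{IP}(\BQ)$ hold. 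Moreover, as noted there, $\varphi$ need not be $j$-homogeneous, but the decomposability property \eqref{decompos} still holds (it is part of the integralgeometric principle), so Theorem \ref{5-Th.samplingwindow} applies verbatim. Since the $j$-homogeneous part $\varphi_j$ equals $\varphi$ itself in this case (as recorded in Example \ref{einvolumes2} via $\varphi_{(j)}=\varphi$), the leading term $\varphi_j(K_0)(1-\E^{-\overline V_d[Y]})$ of \eqref{5-samplingwindow} is already in the desired form.

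First I would treat the generic summands with $m\in\{j+1,\dots,d-1\}$ — more precisely $m\in\{0,\dots,d-1\}$ when viewed as the first index of the multi-index appearing in $\overline\varphi_{(m,\mathbf m)}[K_0,Y]$, with $\mathbf m\in\N_0^s$. Here I insert \eqref{e3.13},
\begin{align*}
\varphi_{(m,\mathbf m)}(K_0,K_1,\dots,K_s)=\int f(x_0,u)\,\Lambda^{(j)}_{(m,\mathbf m)}(K_0,K_1,\dots,K_s,d(x_0,\dots,x_s,u)),
\end{align*}
into the definition
\begin{align*}
\overline\varphi_{(m,\mathbf m)}[K_0,Y]=\gamma^{s}\int \varphi_{(m,\mathbf m)}(K_0,K_1,\dots,K_s)\,\BQ^{s}(d(K_1,\dots,K_s)).
\end{align*}
Interchanging the (finite, by $\mathbf{IP}(\BQ)$) integrations via Fubini's theorem pulls the $\BQ^{s}$-integration inside past $f(x_0,u)$, and the definition of $\bar\Lambda^{(j)}_{(m,\mathbf m)}(K_0,Y,\cdot)$ then yields exactly
\begin{align*}
\overline\varphi_{(m,\mathbf m)}[K_0,Y]=\int f(x_0,u)\,\bar\Lambda^{(j)}_{(m,\mathbf m)}(K_0,Y,d(x_0,\dots,x_s,u)).
\end{align*}
This is the first displayed identity in the statement.

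Next I would handle the terminal case $m=d$, where the decomposability property \eqref{decompos} has already been used inside the proof of Theorem \ref{5-Th.samplingwindow} to factor off the volume factors; one must simply track how $\varphi$ behaves under $\Lambda^{(j)}_{(d,\mathbf m)}$. The point is that $\Lambda^{(j)}_{(d,\mathbf m)}$ carries the Lebesgue-measure factor corresponding to the index $d$: integrating $f$ against $\Lambda_d(\cdot)$ amounts to integrating over the body against Lebesgue measure (as recalled in Example \ref{einvolumes2}), so when $m=d$ the factor $V_d(K_0)$ implicit in \eqref{decompos} is replaced by $\int \I\{x_0\in K_0\}f(x_0,u)\,dx_0$. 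Carrying out the same Fubini rearrangement — now with the $\BQ$-integration including the grain $K_0$ absorbed into $\bar\Lambda^{(j)}_{\mathbf m}(Y,\cdot)$ — gives
\begin{align*}
\overline\varphi_{(d,\mathbf m)}[K_0,Y]=\iint \I\{x_0\in K_0\}f(x_0,u)\,dx_0\,\bar\Lambda^{(j)}_{\mathbf m}(Y,d(x_1,\dots,x_s,u)),
\end{align*}
the second displayed identity. I expect the main obstacle to be purely bookkeeping: one must be careful that the decomposability relation \eqref{decompos} at the level of the support measures $\Lambda^{(j)}_{\mathbf m}$ translates correctly into the replacement of a $V_d$ factor by the Lebesgue integral of $f$, and that the indexing of the arguments $(x_0,x_1,\dots,x_s,u)$ versus $(x_1,\dots,x_s,u)$ matches between the $m<d$ and $m=d$ cases. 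No new analytic input is needed beyond Fubini's theorem, which is licensed throughout by the finiteness coming from $\mathbf{I}(\Theta)$ and $\mathbf{IP}(\BQ)$ (reducible to $f\ge 0$ by the triangle inequality, exactly as in Example \ref{einvolumes2}).
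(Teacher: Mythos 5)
Your proof is correct and takes essentially the same approach as the paper, which disposes of the corollary in a single sentence: the first identity is immediate from the definitions (your Fubini step), and the second follows from the decomposability property of the mixed support measures recorded in \cite[Theorem 3.14]{Hug1999}. Your expanded bookkeeping for the $m=d$ case — replacing the $V_d(K_0)$ factor of \eqref{decompos} by the Lebesgue integral of $f$ over $K_0$ — is exactly the content of that decomposability property at the measure level.
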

\begin{proof} The first identity is a direct consequence of the definition
while the second follows from the decomposability property
of mixed support measures; see \cite[Theorem 3.14]{Hug1999}.
\end{proof}

Let $\varphi\colon\cR^d\to\R$ be measurable and additive and
let $W\in\cK^d$ with $V_d(W)>0$. Let $\alpha\in\R$ and define
\begin{align}\label{densZ}
\overline\varphi_{W,\alpha}[Z]:=\lim_{r\to\infty} \frac{\BE \varphi(Z\cap rW)}{V_d(rW)^\alpha}
\end{align}
whenever this limit exists.
If the limit is positive (and finite),
we call $\overline\varphi_{W,\alpha}[Z]$ the (asymptotic)
{\em $\varphi$-density} of $Z$ (w.r.t.\ $W$).
Of particular importance is the case $\alpha=1$. In this case we write
$\overline\varphi_{W}[Z]:=\overline\varphi_{W,1}[Z]$. If this is independent
of $W$ we shorten this further to $\overline\varphi[Z]$
Under additional assumptions on $\varphi$, Theorem \ref{5-Th.samplingwindow}
implies the existence of these densities along with some formulas.
We start with a simple example.

\begin{example}\label{exvolume}\rm Let $f\colon\R^d\to\R$ be
measurable and locally bounded. Define $\varphi\colon\mathcal{R}^d\to\R$
by $\varphi(K):=\int_K f(x)\,dx$. Then we can apply \eqref{densvolume}
with $\varphi_d=\varphi$ to obtain
\begin{align*}
\BE\varphi(Z\cap W)=\big(1-\E^{-\overline V_d[Y]}\big)\int_{W} f(x)\,dx,
\end{align*}
where $W$ is as in \eqref{densZ}.  Of course this formula is very simple
and makes sense for an arbitrary Borel set $W$.
Assume that
there exists $\beta\in\R$ such that $\{r^{-\beta} f(rx):r\ge 1, x\in W\}$
is bounded and $\lim_{r\to\infty}r^{-\beta} f(rx)=g(x)$, $x\in W$,
for some (bounded) $g\colon W\to\R$. Then
\begin{align*}
\overline\varphi_{W,\alpha}[Z]=\big(1-\E^{-\overline V_d[Y]}\big)V_d(W)^{-\alpha}\int_W g(x)\,dx,
\end{align*}
where $\alpha:=(d+\beta)/d$.
\end{example}

We now turn to Example \ref{einvolumes2}. In the following, we write $\operatorname{mix}^*(j,s)$
for the set of all $(m_1,\ldots,m_s)\in\operatorname{mix}(j,s)$ for which $m_1,\ldots,m_s\le d-1$.
Note that in this situation  we necessarily have $m_1,\ldots,m_s\ge j+1$.

\begin{theorem}\label{t333} Assume that $\varphi$ is given as in
Example \ref{einvolumes2}. Assume
there exists $\beta\in\R$ such that $\{r^{-\beta} f(rx,u):r\ge 1, (x,u)\in W\times\mathbb{S}^{d-1}\}$
is bounded and $\lim_{r\to\infty}r^{-\beta} f(rx,u)=g(x,u)$, $(x,u)\in W\times\mathbb{S}^{d-1}$,
for some $g\colon W\times\mathbb{S}^{d-1}\to\R$. Then
\begin{align*}
\overline\varphi_{W,\alpha}[Z]
&=V_d(W)^{-\alpha}\E^{-\overline V_d[Y]}\sum_{s=1}^{d-j}\frac{(-1)^{s-1}}{s!}\\
&\qquad \times
\sum_{\mathbf{m}\in \operatorname{mix}^*(j,s)}
\iint \I\{x_0\in W\}g(x_0,u)\,dx_0\,\bar\Lambda^{(j)}_{\mathbf{m}}(Y,d(x_1,\ldots,x_s,u)),
\end{align*}
where $\alpha:=(d+\beta)/d$.
\end{theorem}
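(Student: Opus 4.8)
The plan is to apply Theorem~\ref{5-Th.samplingwindow} with the window $rW$ in place of $K_0$, divide by $V_d(rW)^\alpha$, and pass to the limit $r\to\infty$, tracking the homogeneity in $r$ of each summand. Recall from Example~\ref{einvolumes2} that $\varphi$ satisfies a translative integralgeometric principle of order $j$ with $\varphi_j=\varphi$ and with mixed functionals given by \eqref{e3.13}, and that the conditions $\mathbf{I}(\Theta)$ and $\mathbf{IP}(\BQ)$ hold, so Theorem~\ref{5-Th.samplingwindow} is applicable to $K_0:=rW$ for every $r>0$. Thus
\begin{align*}
\BE\varphi(Z\cap rW)
&= \varphi(rW)\big(1-\E^{-\overline V_d[Y]}\big)\\
&\quad+\E^{-\overline V_d[Y]}\sum_{m=j+1}^d\sum_{s=1}^{m-j}\frac{(-1)^{s-1}}{s!}
\sum_{\substack{m_1,\dots,m_s=j\\ m_1+\cdots+m_s=sd+j-m}}^{d-1}
\overline\varphi_{(m,\mathbf{m})}[rW,Y].
\end{align*}

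The main step is to compute $\lim_{r\to\infty} V_d(rW)^{-\alpha}\,\overline\varphi_{(m,\mathbf{m})}[rW,Y]$ for each admissible $(m,m_1,\dots,m_s)$. I would distinguish two cases according to whether $m=d$ or $m\le d-1$. If $m\le d-1$, then by Corollary~\ref{c3.16} and the definition of $\bar\Lambda^{(j)}_{(m,\mathbf{m})}$, the term $\overline\varphi_{(m,\mathbf{m})}[rW,Y]$ is an integral of $f(x_0,u)$ against a mean mixed support measure which, in its $x_0$-argument, is supported on $rW$ (up to lower-order effects); using the homogeneity $\Lambda^{(j)}_{(m,\mathbf{m})}(rW,\,\cdot\,)$ scales like $r^{m}$ in the relevant direction, together with the hypothesis $r^{-\beta}f(rx_0,u)\to g(x_0,u)$ (boundedly) and the substitution $x_0\mapsto x_0/r$, one sees this term grows like $r^{\,m+\beta}$ at most; since $m\le d-1<d\le d+\beta\cdot\mathbf{1}$... more precisely, $V_d(rW)^\alpha=V_d(W)^\alpha r^{d\alpha}=V_d(W)^\alpha r^{\,d+\beta}$, so after division the exponent is $m+\beta-(d+\beta)=m-d<0$ and the contribution vanishes in the limit. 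Hence only the terms with $m=d$ survive. For those, Corollary~\ref{c3.16} gives
$$
\overline\varphi_{(d,\mathbf{m})}[rW,Y]
=\iint \I\{x_0\in rW\}f(x_0,u)\,dx_0\,\bar\Lambda^{(j)}_{\mathbf{m}}(Y,d(x_1,\dots,x_s,u)),
$$
where now $\mathbf{m}=(m_1,\dots,m_s)$ satisfies $m_1+\cdots+m_s=sd+j-d=(s-1)d+j$, i.e.\ $\mathbf{m}\in\operatorname{mix}(j,s)$, and since each $m_i\le d-1$ (the $d$-entries having been split off via \eqref{decompos}) we get exactly $\mathbf{m}\in\operatorname{mix}^*(j,s)$, with $s$ ranging from $1$ to $d-j$. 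Substituting $x_0=r y$ gives $\iint \I\{y\in W\}f(ry,u)\,r^d\,dy\,\bar\Lambda^{(j)}_{\mathbf{m}}(Y,\cdot)$; dividing by $V_d(rW)^\alpha=V_d(W)^\alpha r^{d+\beta}$ and writing $r^d f(ry,u)=r^{d+\beta}\,(r^{-\beta}f(ry,u))$, dominated convergence (using the stated boundedness of $\{r^{-\beta}f(ry,u)\}$ and finiteness of $\bar\Lambda^{(j)}_{\mathbf{m}}(Y,\cdot)$ on $W\times\mathbb{S}^{d-1}$, which follows from $\mathbf{IP}(\BQ)$) yields the limit $V_d(W)^{-\alpha}\iint\I\{y\in W\}g(y,u)\,dy\,\bar\Lambda^{(j)}_{\mathbf{m}}(Y,d(x_1,\dots,x_s,u))$. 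Finally, the first term $\varphi(rW)\big(1-\E^{-\overline V_d[Y]}\big)$ divided by $V_d(rW)^\alpha$ tends to $0$ as well, since $\varphi(rW)=r^j\int f(rx,u)\,\Lambda_j(W,d(x,u))$ grows like $r^{\,j+\beta}$ and $j+\beta<d+\beta$. Collecting the surviving terms (those with $m=d$) and relabelling the index $m$ as the summation over $\mathbf{m}\in\operatorname{mix}^*(j,s)$ produces exactly the claimed formula.

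The main obstacle is the bookkeeping in the case distinction: one must verify that after splitting off the $V_d$-factors via \eqref{decompos} the remaining index tuples with $m=d$ are precisely the elements of $\operatorname{mix}^*(j,s)$ for $1\le s\le d-j$, and that all other terms are genuinely of strictly lower order in $r$ (i.e.\ that there is no hidden additional $r$-dependence in $\bar\Lambda^{(j)}_{(m,\mathbf{m})}(rW,Y,\cdot)$ beyond the expected homogeneity). This requires invoking the precise homogeneity and decomposability properties of the mixed support measures from \cite[Theorem 3.14]{Hug1999}; once these are in hand, the passage to the limit is a routine application of dominated convergence justified by the integrability furnished by $\mathbf{I}(\Theta)$ and $\mathbf{IP}(\BQ)$.
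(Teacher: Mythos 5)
Your proposal is correct and follows essentially the same route as the paper's proof: apply Theorem~\ref{5-Th.samplingwindow} with $K_0=rW$, use Corollary~\ref{c3.16} together with the homogeneity of the mixed support measures to see that every summand with $m\le d-1$ (and the term $\varphi_j(rW)$) contributes at most order $r^{m+\beta}$ while the denominator is $V_d(W)^\alpha r^{d+\beta}$, so only the $m=d$ terms survive, and then pass to the limit by dominated convergence after the substitution $x_0=ry$. The paper expresses the homogeneity step slightly more cleanly as the exact identity $\overline\varphi_{(m,\mathbf{m})}[rW,Y]=r^{m}\int f(rx_0,u)\,\bar\Lambda^{(j)}_{(m,\mathbf{m})}(W,Y,d(x_0,\ldots,x_s,u))$ (with no "lower-order effects" caveat needed, since the mixed support measure scales exactly), but your bookkeeping of the surviving index set $\operatorname{mix}^*(j,s)$, $1\le s\le d-j$, and of the limiting integral matches the paper's argument.
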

\begin{proof} We use Corollary \ref{c3.16}. By the homogeneity properties
of the mixed support  measures we have
\begin{align*}
 \overline\varphi_{(m,\mathbf{m})}[rW,Y]
=r^m \int f(rx_0,u)\bar\Lambda^{(j)}_{(m,\mathbf{m})}(W,Y,d(x_0,\ldots,x_s,u)).
\end{align*}
A similar formula holds for $\varphi_j(rW)$.
Inserting this into \eqref{5-samplingwindow},
using the special form of  $\overline\varphi_{(d,\mathbf{m})}[W,Y]$,
and taking the limit as $r\to\infty$,  yields the result.
\end{proof}

Suppose that a given measurable function
$\varphi\colon\cK^{d}\to\R$ satisfies
a translative integralgeometric principle of order $j$. For a given $\beta\in\R$,
we say that the principle is {\em $\beta$-homogeneous} if for each $k\in\N$ and each
$\mathbf{m}=(m_1,\ldots,m_k)\in \operatorname{mix}(j,k)$,
the function $\varphi_{\mathbf{m}}$ is $(m_1+\beta)$-homogeneous in the first argument.

\begin{example}\label{exsupportf}\rm For $K\in\mathcal{K}^{(d)}$, let
$h_K\colon\mathbb{S}^{d-1}\to\R$ be the {\em support function}
of $K$ defined by $h_K(u):=\max\{\langle x,u\rangle: x\in K\}$.
The {\em Steiner point} $s(K)\in\R^d$ of $K$ is defined
by
\begin{align*}
\frac{1}{d\kappa_d}\int h_K(u)u\,\mathcal{H}^{d-1}(du),
\end{align*}
where $\mathcal{H}^{d-1}$ denotes $(d-1)$-dimensional Hausdorff
measure. Fix $u\in \mathbb{S}^{d-1}$.  Since the map $K\mapsto s(K)$
is translation covariant and additive, the map
$\varphi\colon\mathcal{K}^{(d)}\to\R$ given
by $$\varphi(K):=h_{K-s(K)}(u)=h_K(u)-\langle s(K),u\rangle$$ defines
an additive and translation invariant measurable function, which can
be extended to $\mathcal{R}^d$ in an additive way; see
\cite{S2014,SW2008}. Since $\varphi$ is locally bounded it follows
from Remark \ref{r3.2} that $\varphi$ satisfies condition
$\mathbf{I}(\Theta)$.  It was shown in \cite{Weil95,Schneider03} that
$\varphi$ satisfies a $0$-homogeneous translative integralgeometric
principle of order $1$.  It was also proved there that $\varphi$
admits a measure-valued extension. It follows from Remark
\ref{remcrucial2} and the homogeneity properties of the functionals
$\varphi_{\mathbf m}$ in this case that condition $\mathbf{IP}(\BQ)$
holds for this integralgeometric principle.
\end{example}

The next result can be proved as Theorem \ref{t333}.

\begin{corollary}\label{5-BMdensities}
Let the assumption of Theorem \ref{5-Th.samplingwindow} be satisfied
and assume moreover, that the restriction of $\varphi$ to $\cK^d$
is $\beta$-homogeneous for some $\beta\in\R$. Let $\alpha:=(d+\beta)/d$.
Then
\begin{align*}
\overline\varphi[Z]_{W,d+\beta} &= V_d(W)^{-\alpha}\varphi_d(W)\big(1-\E^{-\overline V_d[Y]}\big)
\end{align*}
in the case $j=d$ and by
\begin{align}\label{5-densities}
\overline\varphi[Z]_{W,\alpha}
=V_d(W)^{-\alpha}\E^{-\overline V_d[Y]}\sum_{s=1}^{d-j}\frac{(-1)^{s-1}}{s!}
\sum_{\mathbf{m}\in \operatorname{mix}^*(j,s)}
\overline\varphi_{\mathbf{m}}[Y]
\end{align}
in the case $j\in\{0,\ldots,d-1\}$. In particular,
if $j=d-1$, then
$$
\overline\varphi[Z]_{W,\alpha} =V_d(W)^{-\alpha}\E^{-\overline V_d[Y]} \overline \varphi_{d-1}[Y].
$$
\end{corollary}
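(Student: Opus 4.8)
The plan is to derive Corollary \ref{5-BMdensities} directly from Theorem \ref{5-Th.samplingwindow} by exploiting the $\beta$-homogeneity of the translative integralgeometric principle, exactly in the spirit of the proof of Theorem \ref{t333}. First I would treat the case $j=d$: here \eqref{5-samplingwindow} collapses to \eqref{densvolume}, so $\BE\varphi(Z\cap rW)=\varphi_d(rW)(1-\E^{-\overline V_d[Y]})$, and since $\varphi_d$ is $d$-homogeneous we may write $\varphi_d(rW)=r^d\varphi_d(W)$; dividing by $V_d(rW)^\alpha=r^{d\alpha}V_d(W)^\alpha=r^{d+\beta}V_d(W)^\alpha$ and noting $d\alpha=d+\beta$ gives the claimed formula (the $r^d$ from $\varphi_d$ cancels against $r^{d+\beta}$ only if $\beta=0$, so strictly the exponent that yields a finite nonzero limit here is $d$, which is why the statement labels it $\overline\varphi[Z]_{W,d+\beta}$ rather than $\overline\varphi[Z]_{W,\alpha}$ — one should double-check this indexing, but the computation itself is immediate).

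For the main case $j\in\{0,\ldots,d-1\}$, I would start from \eqref{5-samplingwindow} applied to $K_0=rW$. The $\beta$-homogeneity assumption says that each mixed functional $\varphi_{(m,\mathbf m)}$ is $(m+\beta)$-homogeneous in its first argument, and $\varphi_j$ is $(j+\beta)$-homogeneous. Hence $\varphi_j(rW)=r^{j+\beta}\varphi_j(W)$ and $\overline\varphi_{(m,\mathbf m)}[rW,Y]=r^{m+\beta}\overline\varphi_{(m,\mathbf m)}[W,Y]$. Dividing \eqref{5-samplingwindow} by $V_d(rW)^\alpha=r^{d+\beta}V_d(W)^\alpha$ and letting $r\to\infty$, every term with exponent $m+\beta<d+\beta$, i.e.\ $m\le d-1$, contributes $0$ in the limit, and the leading term $\varphi_j(rW)(1-\E^{-\overline V_d[Y]})$ with exponent $j+\beta<d+\beta$ also vanishes; only the terms with $m=d$ survive. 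For $m=d$ the constraint $m_1+\cdots+m_s=sd+j-d$ together with $j+1\le m_i\le d-1$ forces $s\in\{1,\ldots,d-j\}$, and after renaming $(m_1,\ldots,m_s)$ we get precisely the sum over $\mathbf m\in\operatorname{mix}^*(j,s)$ (here one uses that $m_1+\cdots+m_s=(s-1)d+j$ is the defining relation of $\operatorname{mix}(j,s)$, matching $sd+j-d$, and $m_i\le d-1$ is the extra "star" condition). The surviving $m=d$ mixed functional $\overline\varphi_{(d,\mathbf m)}[W,Y]$ is itself $d$-homogeneous in $W$ by the decomposability property \eqref{decompos}, so it equals $r^0\cdot r^{\text{(something)}}$ — more precisely, after the substitution it contributes $\overline\varphi_{\mathbf m}[Y]$ times a factor involving $V_d(W)$ that combines with the normalization; I would spell out that the $m=d$ term of \eqref{5-samplingwindow} has the shape $\E^{-\overline V_d[Y]}\sum_s\frac{(-1)^{s-1}}{s!}\sum_{\mathbf m}\overline\varphi_{(d,\mathbf m)}[W,Y]$ and invoke \eqref{decompos} to extract $V_d$ of the last slot, so that dividing by $V_d(rW)^\alpha$ and passing to the limit produces \eqref{5-densities}. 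The case $j=d-1$ is then just the observation that $\operatorname{mix}^*(d-1,s)$ is nonempty only for $s=1$, where it is the singleton $\{d\}$ — wait, that contradicts $m_i\le d-1$; rather for $j=d-1$ we have $d-j=1$, so only $s=1$ occurs and $\operatorname{mix}^*(d-1,1)=\{(d-1)\}$, giving the single term $\overline\varphi_{d-1}[Y]$.

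The routine but slightly delicate point is the bookkeeping of exponents and the passage of the limit through the (absolutely convergent) series in \eqref{5-samplingwindow}: one must check that the boundedness hypothesis — the analogue here being $\beta$-homogeneity, which makes each term an exact power of $r$ rather than merely bounded — lets us interchange $\lim_{r\to\infty}$ with the finite sums over $m$, $s$, $\mathbf m$ (these are genuinely finite, so no issue) and confirm that the condition $\mathbf{IP}(\BQ)$ guarantees each $\overline\varphi_{(m,\mathbf m)}[W,Y]$ is finite so that the $r^{m+\beta-(d+\beta)}\to 0$ kill is legitimate. I expect the main obstacle to be purely notational: getting the reindexing from the double/triple sum in \eqref{5-samplingwindow} (over $m$ from $j+1$ to $d$, over $s$, and over constrained $\mathbf m$) down to the single surviving layer $m=d$ with its $\operatorname{mix}^*(j,s)$ description, and making sure the normalization by $V_d(W)^{-\alpha}$ and the factor from \eqref{decompos} are tracked consistently. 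Since Theorem \ref{t333} already carries out essentially this computation for the special case of Example \ref{einvolumes2}, I would phrase the proof as: "This follows exactly as in the proof of Theorem \ref{t333}, now using the $\beta$-homogeneity of the translative integralgeometric principle in place of the homogeneity of the mixed support measures," and then indicate the two or three lines of exponent arithmetic above.
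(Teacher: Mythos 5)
Your overall strategy is the right one and matches the paper's one‑sentence proof (``The next result can be proved as Theorem~\ref{t333}''): apply Theorem~\ref{5-Th.samplingwindow} to $K_0=rW$, track $r$‑powers through the $\beta$‑homogeneity, divide by $V_d(rW)^\alpha=r^{d+\beta}V_d(W)^\alpha$, and observe that only the $m=d$ layer survives; the reindexing to $\operatorname{mix}^*(j,s)$ and the $j=d-1$ specialization are also correct. However, there is a misreading and a genuine gap. The misreading is in the $j=d$ case: you treat $\varphi_d$ as $d$‑homogeneous and then worry that the limit is nonzero only if $\beta=0$. In fact ``$\beta$‑homogeneous'' refers to the translative integralgeometric \emph{principle} (the definition given just before Example~\ref{exsupportf}): $\varphi_{\mathbf m}$ is $(m_1+\beta)$‑homogeneous in its first argument, so $\varphi_d=\varphi_{(d)}$ is $(d+\beta)$‑homogeneous, $\varphi_d(rW)/V_d(rW)^\alpha$ converges for the exponent $\alpha$, and the subscript $d+\beta$ in the statement is simply a typo for $\alpha$. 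You use exactly this reading, correctly, two sentences later for $j<d$; apply it here too.

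The substantive gap is your step ``invoke \eqref{decompos} to extract $V_d$ of the last slot''. In $\varphi_{(d,\mathbf m)}(K_0,K_1,\dots,K_s)$ the index $d$ sits in the \emph{first} slot, and \eqref{decompos} — even combined with the symmetry of $\varphi_{\mathbf m}(K_1,\cdot)$ in the arguments $K_2,\dots,K_k$ — only lets you factor $V_d$ out of positions $2,\dots,k$. The $\beta$‑homogeneity gives $(d+\beta)$‑homogeneity in $K_0$, which is not the same as the product structure $V_d(K_0)\varphi_{\mathbf m}(K_1,\dots,K_s)$. What the limit actually yields is $V_d(W)^{-\alpha}\,\E^{-\overline V_d[Y]}\sum_{s=1}^{d-j}\tfrac{(-1)^{s-1}}{s!}\sum_{\mathbf m\in\operatorname{mix}^*(j,s)}\overline\varphi_{(d,\mathbf m)}[W,Y]$, and to replace $\overline\varphi_{(d,\mathbf m)}[W,Y]$ by $V_d(W)\,\overline\varphi_{\mathbf m}[Y]$ you must assume extra structure — either full symmetry of the mixed functionals (as for translation invariant valuations, Remark~\ref{rvaluation}) or first‑slot decomposability (as holds for the mixed support measures and is exactly what drives Corollary~\ref{c3.16}). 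Your phrase ``a factor involving $V_d(W)$ that combines with the normalization'' skips precisely this step; once it is tracked, the prefactor comes out as $V_d(W)^{1-\alpha}$, consistent with Corollary~\ref{5-BMdensities2} (where no $V_d(W)$‑factor appears), which strongly suggests the printed $V_d(W)^{-\alpha}$ in \eqref{5-densities} is another typo that a careful proof should surface rather than reproduce.
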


\begin{example}\label{extensor}\rm Define $f\colon\R^d\times\mathbb{S}^{d-1}\to\R$
by
\begin{align*}
f(x,u):=x_1^{\alpha_1}\cdots x_d^{\alpha_d}h(u),\quad x=(x_1,\ldots,x_d)\in\R^d,\,u\in \mathbb{S}^{d-1},
\end{align*}
where $\alpha_1,\ldots,\alpha_d\in [0,\infty)$ and $h\colon \mathbb{S}^{d-1}\to\R$
is measurable and bounded. Let $j\in\{0,\ldots,d-1\}$ and
define $\varphi$ by \eqref{e3.12}.
Then both, Corollary \ref{c3.16} and Corollary \ref{5-BMdensities} apply with the  choices
$\beta:=\alpha_1+\cdots+\alpha_d$ and $g=f$. The non-asymptotic formulas are provided by
Theorem \ref{t333}. In fact the formulas can be simplified a bit by using
the product form of $g$.
If $\alpha_1,\ldots,\alpha_d\in\N_0$ and $h$ is of a similar
product form, then $\varphi$ is a component of the Minkowski tensors
studied in \cite{HHKM2014}.
\end{example}

We finish this section by highlighting density relations for
homogeneous functionals as discussed in Remarks \ref{remcrucial2} and
\ref{rvaluation} and the subsequent Examples \ref{einvolumes} and
\ref{genexamples}.

\begin{corollary}\label{5-Th.samplingwindow2}
Let $Z$ be a stationary Boolean model with convex grains, and let
$K_0\in\cK^d$. Let $\varphi\colon \mathcal{R}^d\to\R$
be a continuous translation invariant valuation satisfying $\mathbf{I}(\Theta)$
which is $j$-homogeneous for some $j\in\{0,\ldots,d\}$. Then
\begin{align*}
\BE \varphi(Z\cap K_0) &=
\varphi(K_0)\left(1-\E^{-\overline V_d[Y]}\right)\nonumber \\
&\quad+\E^{-\overline V_d[Y]}\sum_{m=j+1}^d \sum_{s=1}^{m-j}\frac{(-1)^{s-1}}{s!}
\sum_{\substack{m_1,\dots ,m_s=j\\ m_1+\cdots +m_s=sd+j-m}}^{d-1}
 \overline\varphi_{(m,\mathbf{m})}[K_0,Y].
\end{align*}
\end{corollary}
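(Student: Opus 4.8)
The plan is to obtain this corollary as a direct specialization of Theorem~\ref{5-Th.samplingwindow}; the only work is to verify the hypotheses of that theorem and to identify the functional $\varphi_j$ with $\varphi$ itself. First I would record the structural facts about $\varphi$: its restriction to $\cK^d$ is an additive, translation invariant, continuous valuation which is $j$-homogeneous. Continuity on $\cK^d$ gives measurability there, and by \cite[Theorem 14.4.4]{SW2008} measurability of the restriction to $\cK^d$ is equivalent to measurability of $\varphi$ on $\mathcal{R}^d$. Hence $\varphi\colon\mathcal{R}^d\to\R$ is measurable and additive, which is the basic standing assumption of Theorem~\ref{5-Th.samplingwindow}.

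Next I would invoke Remark~\ref{rvaluation}: a $j$-homogeneous, translation invariant, continuous valuation satisfies a translative integralgeometric principle of order $j$ (this is the result of \cite{W2017}), the associated mixed functionals $\varphi_{\mathbf{m}}$ are separately positively homogeneous of degree $m_i$ in the $i$-th argument in the sense of Remark~\ref{remcrucial2}, and consequently condition $\mathbf{IP}(\BQ)$ is implied by condition $\mathbf{I}(\Theta)$ on $\varphi$ --- which holds by assumption. The same remark yields $\varphi_{(j,d)}(K_1,K_2)=\varphi(K_1)V_d(K_2)$, i.e.\ $\varphi_j=\varphi$.

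With all hypotheses of Theorem~\ref{5-Th.samplingwindow} in place, formula~\eqref{5-samplingwindow} applies verbatim, and substituting $\varphi_j(K_0)=\varphi(K_0)$ gives the stated identity. The only genuinely substantive input is the implication $\mathbf{I}(\Theta)\Rightarrow\mathbf{IP}(\BQ)$, which is not automatic for a general translative integralgeometric principle but becomes so in the present homogeneous setting, thanks to the separate homogeneity of the $\varphi_{\mathbf{m}}$ together with the iterated Vandermonde-type inversion expressing each $\varphi_{\mathbf{m}}$ as a finite linear combination of the integrals $F(r_1K_1,\dots,r_kK_k)$, as recalled in Remark~\ref{remcrucial2}. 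Everything else in the argument is bookkeeping, so I do not anticipate any real obstacle beyond correctly citing this chain of earlier remarks.
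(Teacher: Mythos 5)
Your proposal is correct and follows essentially the same route as the paper: both invoke Remark~\ref{rvaluation} to obtain the translative integralgeometric principle of order $j$, the implication $\mathbf{I}(\Theta)\Rightarrow\mathbf{IP}(\BQ)$ via the homogeneity of the mixed functionals, and the identification $\varphi_j=\varphi$, then specialize Theorem~\ref{5-Th.samplingwindow}. The only cosmetic difference is that the paper deduces $\varphi_j=\varphi$ from the full symmetry of the $\varphi_{\mathbf{m}}$ whereas you cite the explicit decomposability identity $\varphi_{(j,d)}(K_1,K_2)=\varphi(K_1)V_d(K_2)$; both facts appear in that same remark.
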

\begin{proof} As noted in Remark \ref{rvaluation},
$\varphi$ satisfies a translative integralgeometric principle of order
$j$ and $\mathbf{IP}(\BQ)$ holds. Therefore we can apply
Theorem \ref{5-Th.samplingwindow}. Since the $\varphi_{\mathbf{m}}$ are
symmetric in all arguments, we have $\varphi_j=\varphi$ and the result follows.
\end{proof}

For the asymptotic densitities, we thus obtain the following relations.

\begin{corollary}\label{5-BMdensities2} Let the assumption of
Corollary \ref{5-Th.samplingwindow2} be satisfied and let $W\in\mathcal{K}^{d}$
have nonempty interior. Then we have for $j=d$ that
\begin{align*}
\overline\varphi_{W}[Z] &= \frac{\varphi_d(W)}{V_d(W)}\left(1-\E^{-\overline V_d[Y]}\right)
\end{align*}
and for $j\in\{0,\ldots,d-1\}$ that
\begin{align*}
\overline\varphi_W[Z]&=\E^{-\overline V_d[Y]} \sum_{s=1}^{d-j}\frac{(-1)^{s-1}}{s!}
\sum_{\mathbf{m}\in \operatorname{mix}^*(j,s)}
  \overline\varphi_{\mathbf{m}}[Y].
\end{align*}
\end{corollary}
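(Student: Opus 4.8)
The plan is to derive Corollary \ref{5-BMdensities2} directly from Corollary \ref{5-Th.samplingwindow2} by applying the limiting/rescaling argument already used in the proof of Theorem \ref{t333} (equivalently Corollary \ref{5-BMdensities}). First I would treat the case $j=d$: there Corollary \ref{5-Th.samplingwindow2} gives $\BE\varphi(Z\cap rW)=\varphi_d(rW)(1-\E^{-\overline V_d[Y]})$, and since $\varphi_d$ is $d$-homogeneous we have $\varphi_d(rW)=r^d\varphi_d(W)$ while $V_d(rW)=r^dV_d(W)$, so dividing and letting $r\to\infty$ immediately yields $\overline\varphi_W[Z]=\varphi_d(W)V_d(W)^{-1}(1-\E^{-\overline V_d[Y]})$. (In fact no limit is needed here, the ratio is constant in $r$.)

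For $j\in\{0,\ldots,d-1\}$ I would insert $K_0=rW$ into the formula of Corollary \ref{5-Th.samplingwindow2} and divide by $V_d(rW)=r^dV_d(W)$. Two things happen as $r\to\infty$. The term $\varphi(rW)(1-\E^{-\overline V_d[Y]})$ is $O(r^j)$ because $\varphi$ is $j$-homogeneous with $j<d$, so after division by $r^d$ it vanishes in the limit. In the double sum, each mixed functional $\overline\varphi_{(m,\mathbf{m})}[rW,Y]$ is $m$-homogeneous in its first argument $rW$ — this is exactly the homogeneity property of the $\varphi_{\mathbf{m}}$ for a $j$-homogeneous valuation recorded in Remark \ref{rvaluation} (with $\beta=0$, so the principle is $0$-homogeneous) — hence it equals $r^m\overline\varphi_{(m,\mathbf{m})}[W,Y]$. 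For a general index $m\in\{j+1,\ldots,d-1\}$ this is $o(r^d)$ and drops out after division; only the top term $m=d$ survives, contributing $r^d\overline\varphi_{(d,\mathbf{m})}[W,Y]$. By the decomposability property \eqref{decompos}, $\overline\varphi_{(d,m_1,\ldots,m_s)}[W,Y]=\overline\varphi_{(m_1,\ldots,m_s)}[Y]\,V_d(W)$, where the constraint $m_1+\cdots+m_s=sd+j-d$ forces $\mathbf{m}=(m_1,\ldots,m_s)\in\operatorname{mix}(j,s)$, and the condition $m_1,\ldots,m_s\le d-1$ in the inner summation (with $m=d$) means $\mathbf{m}\in\operatorname{mix}^*(j,s)$; the outer index $m=d$ forces $s$ to range over $1,\ldots,d-j$. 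Dividing the surviving term by $V_d(W)$ cancels it and gives precisely
\begin{align*}
\overline\varphi_W[Z]=\E^{-\overline V_d[Y]}\sum_{s=1}^{d-j}\frac{(-1)^{s-1}}{s!}\sum_{\mathbf{m}\in\operatorname{mix}^*(j,s)}\overline\varphi_{\mathbf{m}}[Y],
\end{align*}
which is independent of $W$, so the notation $\overline\varphi_W[Z]$ (and even $\overline\varphi[Z]$) is justified.

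The only points requiring care — rather than genuine obstacles — are the bookkeeping that the index $m=d$ is the unique degree contributing in the limit, and the verification that the residual constraint on $\mathbf{m}$ after setting $m=d$ reduces $\operatorname{mix}(d-m+j,s)$-type sets to $\operatorname{mix}^*(j,s)$; both are elementary once the homogeneity degrees are tracked. I would also note that the existence of the $\overline\varphi_{\mathbf{m}}[Y]$ is guaranteed because $\mathbf{IP}(\BQ)$ holds under the hypotheses of Corollary \ref{5-Th.samplingwindow2} (as observed in its proof via Remark \ref{rvaluation}), so interchanging limit and the finite sums causes no difficulty. The hardest conceptual ingredient is already supplied upstream, namely that a $j$-homogeneous continuous translation invariant valuation satisfies a $0$-homogeneous translative integralgeometric principle with symmetric mixed functionals $\varphi_{\mathbf{m}}$ that are $m_i$-homogeneous in the $i$-th slot; given that, the corollary is a short limiting argument identical in structure to the proof of Corollary \ref{5-BMdensities}.
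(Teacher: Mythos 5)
Your proposal is correct and takes essentially the same route the paper intends (the paper states the corollary follows directly from Corollary \ref{5-Th.samplingwindow2} without a separate proof, referencing the limiting argument of Theorem \ref{t333} and Corollary \ref{5-BMdensities}). You correctly track the homogeneity degrees so that only $m=d$ survives after dividing by $V_d(rW)$, invoke the decomposability property together with the full symmetry of the $\varphi_{\mathbf{m}}$ to split off $V_d(W)$, and identify the residual index set as $\operatorname{mix}^*(j,s)$ with $s$ running from $1$ to $d-j$.
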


\subsection{The stationary and isotropic case}
\label{chap5-subsubsec:3.3.2}

In this section we assume that the Boolean model $Z$ is isotropic,
so that the grain distribution $\BQ$
is invariant under proper rotations.
First we note the following consequence of Corollary \ref{5-Cor5.22}.
As in \cite{SW2008} we denote by $\mu$ the (suitable normalized)
Haar measure on the group $G_d$ of rigid motions of $\R^d$.

\begin{corollary}\label{5-Cor5.iso} Let the assumptions
of Theorem \ref{5-Th1} be satisfied and assume
that $Z$ is stationary and isotropic. Then
\begin{align}\label{5-statisowindo}
\BE \varphi (Z\cap K_0)=
\sum_{k=1}^\infty\frac{(-1)^{k-1}}{k!}\gamma^k\idotsint& \varphi (K_0\cap g_1K_1\cap\cdots\cap g_k K_k)\\
\notag
&\times\BQ(dK_1)\cdots \BQ(dK_k)\,\mu^k(d(g_1,\dots,g_k)).
\end{align}
\end{corollary}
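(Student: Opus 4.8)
The plan is to derive \eqref{5-statisowindo} directly from Corollary~\ref{5-Cor5.22} by averaging over rotations. Under the stationarity assumption, Corollary~\ref{5-Cor5.22} already provides the iterated-integral representation
\begin{align*}
\BE \varphi (Z\cap K_0)&=
\sum_{k=1}^\infty\frac{(-1)^{k-1}}{k!}\gamma^k\idotsint \varphi (K_0\cap K_1^{x_1}\cap\dots \cap K_k^{x_k})\,d(x_1,\dots ,x_k)\,\BQ(dK_1)\cdots \BQ(dK_k),
\end{align*}
with absolute convergence guaranteed by $\mathbf{I}(\Theta)$ (via Theorem~\ref{5-Th1}). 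The additional isotropy hypothesis says precisely that $\BQ$ is invariant under proper rotations. The idea is therefore to replace each grain $K_i$ by $\vartheta_i K_i$ for a rotation $\vartheta_i$ drawn from the invariant probability measure $\nu$ on $SO(d)$ and then to absorb the translation $x_i$ and the rotation $\vartheta_i$ into a single rigid motion $g_i = (\vartheta_i, x_i) \in G_d$, acting by $g_iK_i = \vartheta_i K_i + x_i$.

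First I would use the rotation invariance of $\BQ$ to write, for each fixed $i$ and each bounded measurable $h$ on $\cK^d$, $\int h(K_i)\,\BQ(dK_i) = \iint h(\vartheta_i K_i)\,\nu(d\vartheta_i)\,\BQ(dK_i)$. Applying this for $i=1,\dots,k$ inside the $k$-th term of the series turns the integrand $\varphi(K_0\cap K_1^{x_1}\cap\dots\cap K_k^{x_k})$ into $\varphi(K_0\cap (\vartheta_1 K_1)^{x_1}\cap\dots\cap(\vartheta_k K_k)^{x_k})$, integrated additionally against $\nu^k(d(\vartheta_1,\dots,\vartheta_k))$. Next I would invoke the standard fact (as in \cite{SW2008}, e.g.\ the product decomposition of the motion-invariant measure $\mu$ on $G_d$) that the integral of any nonnegative measurable function $f$ on $G_d$ against $\mu$ equals $\iint f(\vartheta, x)\,dx\,\nu(d\vartheta)$ up to the chosen normalization; hence for each $i$, $\iint \cdot\;d x_i\,\nu(d\vartheta_i) = \int \cdot\;\mu(dg_i)$ with $g_i=(\vartheta_i,x_i)$. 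Carrying this out for all $i$ collapses the $x$-integrations and $\vartheta$-integrations into $k$ integrations against $\mu$, and rearranging the order of integration (permissible by the absolute convergence already established, via Fubini--Tonelli) yields exactly \eqref{5-statisowindo}.

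The only point requiring a little care is the justification for interchanging the various integrals and the infinite sum: this is handled by noting that the absolute integrability underlying Corollary~\ref{5-Cor5.22} is preserved under the rotation averaging, since $|\varphi(K_0\cap g_1K_1\cap\dots\cap g_kK_k)|$ integrated against $dx_i$ and $\nu(d\vartheta_i)$ reproduces $|\varphi(K_0\cap K_1^{x_1}\cap\dots\cap K_k^{x_k})|$ integrated against $dx_i$ after using rotation invariance of $\BQ$ in the reverse direction, and the resulting bound is precisely the series appearing in $\mathbf{I}(\Theta)$ after the decomposition \eqref{Thetastat}. Thus the main obstacle is essentially bookkeeping — correctly matching the normalization of $\mu$ with the product $dx\otimes\nu$ and confirming measurability of $(g_1,\dots,g_k)\mapsto\varphi(K_0\cap g_1K_1\cap\dots\cap g_kK_k)$ — rather than any substantive difficulty; the structural content is already contained in Corollary~\ref{5-Cor5.22} together with the isotropy of $\BQ$.
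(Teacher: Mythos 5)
Your argument is correct and follows the route the paper intends: the paper merely records Corollary \ref{5-Cor5.iso} as an immediate consequence of Corollary \ref{5-Cor5.22} without spelling out the details. You supply exactly the implicit steps — averaging over rotations using isotropy of $\BQ$, recombining $dx_i\,\nu(d\vartheta_i)$ into $\mu(dg_i)$ via the product decomposition of Haar measure on $G_d$, and invoking the absolute convergence already secured by $\mathbf{I}(\Theta)$ to justify Fubini--Tonelli.
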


We say that a measurable function $\varphi\colon\cK^{d}\to\R$ satisfies
a {\em kinematic integralgeometric principle} 
if there exist
measurable functions $\varphi_{1},\ldots,\varphi_d$ on $\cK^d$
and constants $c_0,\ldots,c_d\in\R$
such that for all $k\in\N$ and all $K_0,\ldots,K_k\in\cK^d$
\begin{align}\label{5-isodint-2}
\int\varphi(K_0\cap g_1 K_1\cap \cdots \cap g_k K_k)\,\mu^k(d(g_1,\dots,g_k))
=\sum^d_{\substack{r_0,\ldots,r_k=0\\r_0+\cdots+r_k=kd}}\varphi_{r_0}(K_0)\prod^k_{i=1}c_{r_i} V_{r_i}(K_i),
\end{align}
where $\varphi_0:=\varphi$. If $\varphi$ is defined on $\mathcal{R}^d$, this definition applies to the restriction of
$\varphi$ to $\mathcal{K}^d$.

In the following remark and also later we use the constants
\begin{align}\label{ecij}
c^i_j:=\frac{i!\kappa_i}{j!\kappa_j},\quad i,j\in\{0,\ldots,d\};
\end{align}
see \cite[(5.4)]{SW2008}.

\begin{remark}\rm \label{rkinematic} Suppose that
$\varphi\colon\cK^d\to\R$ is
additive and continuous on
$\cK^{(d)}$. By \cite[Theorem 5.1.4]{SW2008}, $\varphi$ satisfies a
kinematic integralgeometric principle with $c_i:=c^i_d$, $i\in\{0,\ldots,d\}$.
If $\varphi=V_j$ for some $j\in\{0,\ldots,d\}$, then the same result shows that
$$
\varphi_i=c^{i+j}_jV_{j+i},\quad i\le d-j,   
$$
and $\varphi_i=0$ for $i>d-j$.

The function $\varphi$ from Example \ref{exsupportf} is continuous. The
functions $\varphi_1,\ldots,\varphi_d$ and the constants
$c_0,\ldots,c_d$ can be expressed in terms of so-called
{\em mean section bodies} of $K$; see \cite{Weil95} for more details.
\end{remark}

The following result is Theorem 9.1.3 in \cite{SW2008} if $\varphi$ is assumed to be continuous, which ensures
via Hadwiger's general integralgeometric theorem that $\varphi$ satisfies a kinematic integralgeometric principle.
The proof is similar to that of Theorem \ref{5-Th.samplingwindow}.

\begin{theorem}\label{5-Th.samplingwindowiso}
Let $\varphi\colon\cR^{d}\to\R$ be measurable and additive.
Assume that $\varphi$ satisfies condition $\mathbf{I}(\Theta)$ as well as
  a kinematic integralgeometric principle. Let $K_0\in\cK^d$. Then
\begin{align}\label{5-isotropicfinite}
\BE \varphi(Z\cap K_0)
&= \varphi(K_0)\left(1-\E^{-\overline V_d[Y]}\right) \\ \notag
&\quad +\E^{-\overline V_d[Y]}\sum_{m=1}^d \varphi_m(K_0)\sum_{s=1}^{m}\frac{(-1)^{s-1}}{s!}
\sum_{\substack{m_1,\dots ,m_s=0\\ m_1+\cdots +m_s=sd-m}}^{d-1}\prod^s_{i=1}c_{m_i}{\overline V}_{m_i}[Y].
\end{align}
\end{theorem}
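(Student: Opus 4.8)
The plan is to mimic the proof of Theorem~\ref{5-Th.samplingwindow}, replacing the translative integralgeometric principle by the kinematic one. Since $\varphi$ satisfies $\mathbf{I}(\Theta)$, Corollary~\ref{5-Cor5.iso} applies and gives
\[
\BE \varphi(Z\cap K_0)=\sum_{k=1}^\infty\frac{(-1)^{k-1}}{k!}\gamma^k\idotsint \varphi(K_0\cap g_1K_1\cap\cdots\cap g_kK_k)\,\BQ(dK_1)\cdots\BQ(dK_k)\,\mu^k(d(g_1,\dots,g_k)),
\]
with absolute convergence. First I would apply the kinematic integralgeometric principle \eqref{5-isodint-2} to the inner $\mu^k$-integral. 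This expresses the $k$-th term as a sum over tuples $(r_0,r_1,\dots,r_k)\in\{0,\dots,d\}^k$ with $r_0+\cdots+r_k=kd$ of the product $\varphi_{r_0}(K_0)\prod_{i=1}^k c_{r_i}V_{r_i}(K_i)$ (with the convention $\varphi_0=\varphi$). Integrating against $\BQ^k$ replaces each factor $c_{r_i}V_{r_i}(K_i)$ by its mean; one has to be a little careful about whether $\mathbf{I}(\Theta)$ guarantees the integrability of each individual summand after splitting, but since the functionals $V_{r_i}$ are nonnegative and the full sum is absolutely convergent, each piece is finite (this is the analogue of the role of $\mathbf{IP}(\BQ)$ in the translative case, and here is automatic because the $V_{r_i}$ are nonnegative; I would remark on this). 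Introducing the notation $\overline V_{m}[Y]=\gamma\int V_m(K)\,\BQ(dK)$ and writing $c_{d}V_d$ for the ``top'' factor, the $k$-th term becomes $\frac{(-1)^{k-1}}{k!}\sum_{r_0+\cdots+r_k=kd}\varphi_{r_0}(K_0)\gamma^{r_0\ne 0}\cdots$, i.e.\ $\varphi_{r_0}(K_0)\prod_{i\ge 1}c_{r_i}\overline V_{r_i}[Y]$ after absorbing the $\gamma^k$ (note $c_d=c^d_d=1$ and $V_d$ is the volume, so factors with $r_i=d$ produce $\overline V_d[Y]$).

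Next I would separate the contribution $r_0=0$ (where $\varphi_{r_0}=\varphi$) from the contributions $r_0=m\in\{1,\dots,d\}$ (where $\varphi_{r_0}=\varphi_m$). In the $r_0=0$ block, all $r_1,\dots,r_k$ must sum to $kd$, hence all equal $d$, so this block contributes $\varphi(K_0)\sum_{k\ge1}\frac{(-1)^{k-1}}{k!}(\overline V_d[Y])^k=\varphi(K_0)(1-\E^{-\overline V_d[Y]})$, exactly as in the $j=d$ case of Theorem~\ref{5-Th.samplingwindow}. For a fixed $r_0=m\ge 1$, among $r_1,\dots,r_k$ let $s$ be the number of indices strictly less than $d$ (there must be at least one since the $k$ remaining indices sum to $kd-m<kd$, and at most $m$ by the inequality $kd-m=\sum_{i\ge1}r_i\le s(d-1)+(k-s)d$, exactly the counting argument used in the proof of Theorem~\ref{5-Th.samplingwindow}). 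Rearranging via symmetry, the multinomial coefficient $\binom{k}{s}$ appears, the $s$ ``small'' indices $m_1,\dots,m_s\in\{0,\dots,d-1\}$ satisfy $m_1+\cdots+m_s=sd-m$, and the remaining $k-s$ indices equal $d$, producing a factor $(\overline V_d[Y])^{k-s}$.

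The final step is the resummation. Grouping by $m$ and $s$ and setting $r:=k-s$, the double series over $k$ collapses exactly as in the proof of Theorem~\ref{5-Th.samplingwindow}:
\[
\sum_{k\ge 1}\frac{(-1)^{k-1}}{k!}\binom{k}{s}(\overline V_d[Y])^{k-s}
=\sum_{r\ge 0}\frac{(-1)^{r+s-1}}{r!\,s!}(\overline V_d[Y])^r
=\frac{(-1)^{s-1}}{s!}\,\E^{-\overline V_d[Y]},
\]
so that the total contribution of the block $r_0=m$ is $\E^{-\overline V_d[Y]}\varphi_m(K_0)\sum_{s=1}^m\frac{(-1)^{s-1}}{s!}\sum_{m_1+\cdots+m_s=sd-m}\prod_{i=1}^s c_{m_i}\overline V_{m_i}[Y]$. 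Summing over $m\in\{1,\dots,d\}$ and adding the $r_0=0$ block gives precisely \eqref{5-isotropicfinite}. The main obstacle — really the only nontrivial point — is justifying that the splitting of the absolutely convergent $k$-sum into these blocks (and the rearrangement by the index $s$) is legitimate; this is handled exactly as in Theorem~\ref{5-Th.samplingwindow}, using that all the $V_{r_i}$-factors are nonnegative so that absolute convergence of the original series transfers to each regrouped subseries by the comparison already recorded in Remark~\ref{remcrucial}.
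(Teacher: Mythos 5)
Your proposal is correct and follows exactly the route the paper intends: the authors only remark that the proof is ``similar to that of Theorem~\ref{5-Th.samplingwindow}'', and your argument is precisely the faithful kinematic analogue — apply Corollary~\ref{5-Cor5.iso}, expand via~\eqref{5-isodint-2}, split off $r_0=0$, count the $s$ small indices with the same bound $s\le m$, and resum using the Vandermonde/binomial identity to pull out $\E^{-\overline V_d[Y]}$. Your observation that the $\mathbf{IP}(\BQ)$-type integrability is automatic here (the $V_{r_i}$ are nonnegative and, for each $k$, the kinematic expansion is a finite sum over a bounded index set, so finiteness of the $\overline V_i[Y]$ already secures absolute convergence of the regrouped series) is also correct, though the stray notation ``$\gamma^{r_0\ne 0}$'' in the middle of the computation should be deleted; the displayed conclusion $\varphi_{r_0}(K_0)\prod_{i\ge 1}c_{r_i}\overline V_{r_i}[Y]$ is what you mean and is right.
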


\begin{corollary}\label{5-Miles} Let the assumptions of
Theorem \ref{5-Th.samplingwindowiso} be satisfied. Assume
that there exists $j\in\{0,\ldots,d-1\}$ such that
$\varphi_m$ is $(m+j)$-homogeneous for $m\in\{0,\ldots,d-j\}$,
and  $\varphi_m=0$ for $m>d-j$.
Let $K_0\in\mathcal{K}^d$ be such that $V_d(K_0)>0$. Then
\begin{align*}
\lim_{r\to\infty} \frac{\BE \varphi(Z\cap rK_0)}{V_d(rK_0)}
=\frac{\varphi_{d-j}(K_0)}{V_d(K_0)}\E^{-\overline V_d[Y]}\sum_{s=1}^{d-j}\frac{(-1)^{s-1}}{s!}
\sum_{\substack{m_1,\dots ,m_s=0\\ m_1+\cdots +m_s=(s-1)d+j}}^{d-1}\prod^s_{i=1}c_{m_i}{\overline V}_{m_i}[Y].
\end{align*}
\end{corollary}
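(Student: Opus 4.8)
The plan is to apply Theorem~\ref{5-Th.samplingwindowiso} with the dilated window $rK_0$ in place of $K_0$, use the prescribed homogeneity of $\varphi,\varphi_1,\ldots,\varphi_d$ to make the dependence on $r$ explicit, divide by $V_d(rK_0)=r^dV_d(K_0)$, and let $r\to\infty$.

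First I would note that the hypotheses of Theorem~\ref{5-Th.samplingwindowiso} (measurability, additivity, condition $\mathbf{I}(\Theta)$, and a kinematic integralgeometric principle) concern only $\varphi$ and not the window, so \eqref{5-isotropicfinite} holds verbatim with $rK_0$ for every $r>0$. Since $\varphi=\varphi_0$, the $m=0$ instance of the hypothesis ``$\varphi_m$ is $(m+j)$-homogeneous'' says that $\varphi$ is $j$-homogeneous, whence $\varphi(rK_0)=r^j\varphi(K_0)$ and $\varphi_m(rK_0)=r^{m+j}\varphi_m(K_0)$ for $m\in\{1,\ldots,d-j\}$, while $\varphi_m\equiv 0$ for $m>d-j$ removes all those terms from the outer sum in \eqref{5-isotropicfinite}. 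Also $V_d(rK_0)=r^dV_d(K_0)$, which is positive because $V_d(K_0)>0$. Substituting and dividing gives
\begin{align*}
\frac{\BE\varphi(Z\cap rK_0)}{V_d(rK_0)}
&=r^{\,j-d}\,\frac{\varphi(K_0)}{V_d(K_0)}\bigl(1-\E^{-\overline V_d[Y]}\bigr)\\
&\quad+\E^{-\overline V_d[Y]}\sum_{m=1}^{d-j}r^{\,m+j-d}\,\frac{\varphi_m(K_0)}{V_d(K_0)}
\sum_{s=1}^{m}\frac{(-1)^{s-1}}{s!}
\sum_{\substack{m_1,\dots,m_s=0\\ m_1+\cdots+m_s=sd-m}}^{d-1}\prod_{i=1}^s c_{m_i}\overline V_{m_i}[Y],
\end{align*}
where the inner double sums are fixed real numbers not depending on $r$ (they are pieces of the right-hand side of \eqref{5-isotropicfinite}, hence finite).

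To finish, I would observe that since $j<d$ the first summand tends to $0$ as $r\to\infty$, and in the finite sum over $m$ the factor $r^{m+j-d}$ tends to $0$ for $m<d-j$ and equals $1$ for $m=d-j$; thus only the term $m=d-j$ survives in the limit. Rewriting the constraint $m_1+\cdots+m_s=sd-m$ for $m=d-j$ as $m_1+\cdots+m_s=(s-1)d+j$, and noting that the $s$-range is then $1\le s\le d-j$, gives precisely the claimed identity. There is no real obstacle here beyond this bookkeeping: the only points deserving a word are the positivity of $V_d(rK_0)$ (from $V_d(K_0)>0$), the fact that $\varphi$ itself is $j$-homogeneous (the $m=0$ case of the hypothesis), and that the sum over $m$ is finite so that exchanging the limit with the summation is immediate.
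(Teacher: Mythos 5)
Your proof is correct and is exactly the argument the paper leaves implicit (the corollary is stated without a written proof, as a direct consequence of Theorem~\ref{5-Th.samplingwindowiso}). The substitution $K_0\mapsto rK_0$, the use of the assumed homogeneities to extract powers of $r$, the vanishing of $\varphi_m$ for $m>d-j$, and the observation that after dividing by $V_d(rK_0)=r^dV_d(K_0)$ only the $m=d-j$ term survives in the limit, with the index constraint $m_1+\cdots+m_s=sd-m$ becoming $(s-1)d+j$, is precisely the intended bookkeeping.
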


By Remark \ref{rkinematic} we may take $\varphi=V_j$ in
Corollary \ref{5-Miles}. Then $\varphi_{d-j}=d!\kappa_d/(j!\kappa_j)V_d$, so that
Corollary \ref{5-Miles}  gives the classical Miles' formulas;
see \cite{Miles76,Davy76} and \cite[Theorem 9.1.4]{SW2008}.

\subsection{Determination and estimation}
\label{chap5-subsubsec:3.3.2n}
The mean value formulas obtained so far can be used to estimate
certain characteristics of the underlying particle process $Y$ from
observations of functionals $\varphi$ of the Boolean model
$Z$. Naturally, this requires the choice of suitable functionals
$\varphi$. In the simplest case, the challenge is to estimate the
intensity $\gamma$ in the stationary case. For such a task we need to
know a class of functionals $\varphi$ such that the mean values
$\BE \varphi (Z\cap W)$, respectively the densities
$\overline \varphi [Z]$, determine $\gamma$.

For a stationary and isotropic Boolean model $Z$ with convex grains,
Corollary \ref{5-Miles} shows that the intensity $\gamma$ is uniquely
determined by the densities $\overline V_j[Z], j=0,\dots ,d$. In fact,
the formulas build a triangular array, which can be seen better, if we
write them in explicit form,
\begin{align}\label{5-jdens}
  \overline  V_j[Z]&=\E^{-\overline V_d[Y]}\Biggl(\overline V_j[Y] -c^d_j\sum_{s=2}^{d-j}\frac{(-1)^{s}}{s!}
 \sum_{\substack{m_1,\dots, m_s=j+1\\ m_1+\cdots +m_s=(s-1)d+j}}^{d-1} \prod_{i=1}^s c_d^{m_i}\overline V_{m_i}[Y]  \Biggr) .
\end{align}
By recursion, the densities $\overline V_d[Y]$ and
$\overline V_{m_i}[Y]$ in \eqref{5-jdens} are determined by the
corresponding equations for $\overline V_i[Z]$, $i=d,\dots ,
j+1$. Therefore, \eqref{5-jdens} determines $\overline V_j[Y]$. At the
end of the recursion, we obtain $\overline V_0[Y] =\gamma$ (since
$V_0 = 1$ on $\cK^{(d)}$).

Solving the system \eqref{5-jdens} for $\overline V_0[Y]$, we obtain
an equation
$$
\gamma = f_{d0}(\overline  V_0[Z],\dots ,\overline  V_d[Z])
$$
with an explicitly given rational function $f_{d0}$. For
$\overline V_j[Z]$, various estimators are described in the literature
(see, for example, \cite[Section 9.4]{SW2008}), among them also
unbiased ones.  We thus obtain estimators for $\gamma$ (usually biased
ones). For other estimators of $\gamma$, see \cite[Section
9.5]{SW2008}. More generally, there is a rational function $f_{dj}$
such that
$$
\overline V_j [Y] = f_{dj}(\overline  V_j[Z],\dots ,\overline  V_d[Z]) ,\quad j=0,\dots , d.
$$
If $\gamma$ is already determined (estimated), we thus get also estimators for the mean particle characteristics
$$
\BE_{\BQ}[V_j] = \int V_j(K)\,{\BQ}(dK), \quad j=1\dots ,d.
$$

Since the intrinsic volumes $V_j$ are rotation invariant, it is clear
that the densities $\overline V_j[Z]$ do no longer determine $\gamma$,
if $Z$ is not isotropic. In fact, the formulas \eqref{5-densities}
then involve mixed densities $\overline V_{\bf m} [Y]$, for which a
corresponding recursion procedure is not apparent. It is thus natural
to consider Corollary \ref{5-BMdensities2} with functionals
$\varphi$ which are direction dependent and hence distinguish
between different rotation images of convex bodies. Such functionals
are the mixed volumes $K\mapsto V(K[j],M[d-j])$,
$j=0,\dots ,d$, where $M$ varies in ${\cK}^d$; see Example
\ref{einvolumes}. Justified by Corollary \ref{5-BMdensities2},
we denote by $\overline V(Z[j], M[d-j])$ the density of
the functional $K\mapsto V(K[j], M[d-j])$.  The following general result was obtained in
\cite{HW2019}.

\begin{theorem}\label{5-mixedvol} Let $Z$ be a stationary Boolean model
with convex grains which  satisfies
\begin{equation}\label{moment}
\int V_1(K)^{d-2}\, \mathbb{Q}(dK)<\infty .
\end{equation}
If for $j=0,\dots ,d$ and all $M\in{\cal K}^d$ the densities of the
mixed volumes $\overline V(Z[j], M[d-j])$ are given, then the
intensity $\gamma$ is uniquely determined.
\end{theorem}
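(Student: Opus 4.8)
The plan is to apply Corollary \ref{5-BMdensities2} in the case $j\in\{0,\ldots,d-1\}$ to the $j$-homogeneous, translation invariant, continuous valuation $\varphi_M(K):=V(K[j],M[d-j])$ (see Example \ref{einvolumes}, part~(1), which also covers the degenerate cases $j=0$ and $j=d$), and then to unwind the resulting system of equations recursively in $j$, starting from $j=d$. For $j=d$ one has $\varphi_M=V_d$ and Corollary \ref{5-BMdensities2} gives $\overline V_d[Z]=1-\E^{-\overline V_d[Y]}$, so that $\overline V_d[Y]=\gamma\,\BE_\BQ[V_d]$ is determined, and hence so is the prefactor $\E^{-\overline V_d[Y]}$. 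Proceeding downward, for each $j<d$ the density $\overline V(Z[j],M[d-j])$ equals $\E^{-\overline V_d[Y]}$ times a finite linear combination (over $s$ and over $\mathbf m\in\operatorname{mix}^*(j,s)$) of the quantities $\overline{(\varphi_M)}_{\mathbf m}[Y]$; the $s=1$ term is $\overline{(\varphi_M)}_j[Y]=\overline{\varphi_M}[Y]=\gamma\int V(K[j],M[d-j])\,\BQ(dK)$, while all terms with $s\ge 2$ involve mixed functionals $(\varphi_M)_{\mathbf m}$ whose non-$M$, non-$K_1$ entries have degrees $m_2,\ldots,m_s\ge j+1$, i.e. they can be expressed through mixed translative functionals associated with higher-order data that, by the recursion hypothesis, are already pinned down. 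So the first task is to argue that, for fixed $\gamma$, the map $M\mapsto \overline V(Z[j],M[d-j])$ determines $\gamma\int V(K[j],M[d-j])\,\BQ(dK)$ for all $M$, provided the relevant higher-order ($m\ge j+1$) translative mixed densities are known.

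The second, and central, task is to convert the knowledge of the functional $M\mapsto \gamma\int V(K[j],M[d-j])\,\BQ(dK)$ for all $M\in\cK^d$ into knowledge of $\gamma$ itself (and, along the way, of enough of the translative mixed densities to keep the recursion going). Here one uses that $V(K[j],M[d-j])$, as a function of $M$, is (up to normalising constants) the mixed volume pairing with the $j$-th \emph{area measure} $S_j(K,\cdot)$: concretely $V(K[j],M[d-j]) = \binom{d}{j}^{-1}\!\int_{S^{d-1}} h_M\, dS_j(K,\cdot)$ when $M$ is smooth, and by continuity and the density of support functions of smooth bodies this means that knowing $M\mapsto\gamma\int V(K[j],M[d-j])\,\BQ(dK)$ for all $M$ is equivalent to knowing the \emph{mean area measure} $\gamma\int S_j(K,\cdot)\,\BQ(dK)$ as a measure on $S^{d-1}$. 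Evaluating its total mass gives $\gamma\,\BE_\BQ[V_j]$; and when we reach $j=0$ we get $\gamma\,\BE_\BQ[V_0]=\gamma$ since $V_0\equiv 1$ on $\cK^{(d)}$. To make the recursion self-contained one should also check that the translative mixed functionals $(\varphi_M)_{\mathbf m}$ appearing for a given $j$ are, via the integralgeometric relation \eqref{5-mixedint-2} and the associated mixed support measures of Example \ref{einvolumes2}, determined by the mean measures $\bar\Lambda^{(i)}_{\cdot}(Y,\cdot)$ with $i\ge j+1$ — equivalently by the mean area measures $\gamma\int S_i(K,\cdot)\,\BQ(dK)$ with $i\ge j+1$ together with $\gamma$ — all of which are available once we have processed the cases $d,d-1,\ldots,j+1$.

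The role of the moment hypothesis \eqref{moment}, $\int V_1(K)^{d-2}\,\BQ(dK)<\infty$, is to guarantee integrability: it ensures $\int V_j(K)\,\BQ(dK)<\infty$ for all $j\le d-1$ (so the mean area measures are finite), and, more importantly, that the iterated translative integrals defining the mixed densities $\overline{(\varphi_M)}_{\mathbf m}[Y]$ converge, i.e. that conditions $\mathbf I(\Theta)$ and $\mathbf{IP}(\BQ)$ hold for all the valuations $\varphi_M$ in play; this is the technical input that legitimises the use of Corollary \ref{5-BMdensities2} at every level of the recursion. I would cite \cite{HW2019} for the precise verification of these integrability bounds rather than reproduce them.

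\textbf{Main obstacle.} The routine parts are the downward recursion and the final mass evaluation $\overline{V(Z[0],M[d])}\rightsquigarrow \gamma$. The genuinely delicate point is the \emph{inversion} step: showing that, level by level, the \emph{asymptotic density} $\overline V(Z[j],M[d-j])$ — which Corollary \ref{5-BMdensities2} presents as a single linear combination of mixed densities — can be disentangled into its individual pieces, in particular that the leading ($s=1$) piece $\gamma\int V(K[j],M[d-j])\,\BQ(dK)$ is separated from the genuinely mixed ($s\ge2$) contributions. This requires knowing that those higher contributions are \emph{already} determined by the previously treated higher-order data, which in turn rests on a careful bookkeeping of which mixed support measures $\Lambda^{(i)}_{\mathbf m}$ enter with which indices. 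Organising this bookkeeping cleanly — so that the recursion genuinely closes and no circularity remains — is where the real work lies, and it is exactly the content carried out in \cite{HW2019}.
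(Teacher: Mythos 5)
Your high-level scheme is the right one — apply Corollary \ref{5-BMdensities2} to $\varphi_M:=V(\cdot[j],M[d-j])$, recurse from $j=d$ down to $j=0$, isolate the $s=1$ term from the $s\ge 2$ terms, and read off $\gamma$ at the bottom. But there is a genuine gap in the middle, exactly at the point you flagged as ``the genuinely delicate point,'' and your proposed resolution of it is not correct.

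You claim that the quantity to be propagated down the recursion is the mean \emph{area} measure $\gamma\int S_i(K,\cdot)\,\BQ(dK)$, and that the mixed translative densities $\overline V_{m_1,\dots,m_s,d-j}[Y,\dots,Y,M^*]$ are ``equivalently'' determined by these mean area measures (together with $\gamma$). That is not so. The mixed translative functionals $V_{m_1,\dots,m_s,d-j}$ of convex bodies are \emph{not} functions of the area measures of the individual bodies; they cannot be expressed through $S_{m_i}(K_i,\cdot)$ alone. What makes the recursion close is the product integral representation from \cite{HRW2018}, in which the mixed functionals factor through the \emph{flag measures} $\psi_{m_i}(K_i,\cdot)$ (projection averages of area measures, living on the flag manifold $F(d,d-m_i)$, cf.\ Example \ref{genexamples}(4)), integrated against a universal kernel $f_{m_1,\dots,m_s,d-j}$. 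Consequently the recursion must determine the mean flag measures $\overline\psi_i[Y]$ for $i\ge j+1$, not the mean area measures; the area measure is only a marginal of the flag measure and carries strictly less information in the non-isotropic setting. Your ``equivalently'' collapses this distinction and would only be valid under isotropy, where all mean measures are Haar multiples.

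The second missing ingredient is the inversion step you describe via ``density of support functions of smooth bodies.'' That classical argument does show that knowing $M\mapsto\gamma\int V(K[j],M[d-j])\,\BQ(dK)$ determines the mean area measure $\gamma\int S_j(K,\cdot)\,\BQ(dK)$ — but, as just explained, the mean area measure is not enough to carry the recursion forward. What the paper actually needs, and proves, is that these data determine the mean \emph{flag} measure $\overline\psi_j[Y]$. The relevant flag representation of $V(K[j],M[d-j])$, from \cite{HRW2013}, has a universal kernel $g_j$ that is \emph{not} continuous, so the ``density of support functions'' argument does not go through, and a direct functional-analytic inversion is not available. The paper's resolution is to invoke Alesker's irreducibility/density theorem: linear spans of the mixed volumes $V(\cdot[j],M[d-j])$, $M\in\cK^d$, are dense in the Banach space $\val_j$ of translation invariant continuous $j$-homogeneous valuations, which is what finally lets one determine $\overline\psi_j[Y]$. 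Your outline has no counterpart to this step, and since it is the crux of why the theorem is true in the anisotropic case, the gap is essential rather than cosmetic.
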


\begin{proof}
  We sketch the main ideas of the proof and refer to \cite{HW2019},
  for details. We first mention, that the density formulas for mixed
  volumes have the form
\begin{align*}
  \overline V_d[Z] &= 1-\E^{-\overline V_d[Y]} ,
  \\
  \overline V[Z [d-1], M[1]] &= \E^{-\overline
    V_d[Y]}\overline V[Y [d-1],M [1]] ,
\end{align*}
and
\begin{align}\label{5-nonisotropic}
  \binom{d}{j}\overline V[Z [j], M[d-j] ]& =\E^{-\overline V_d[X]}\Biggl(\overline V[Y[j],M[d-j]]\nonumber\\
	&\quad-\sum_{s=2}^{d-j}\frac{(-1)^{s}}{s!}
\sum_{\substack{m_1,\dots, m_s=j+1\\ m_1+\dots +m_s=(s-1)d+j}}^{d-1} \overline V_{m_1,\dots ,m_s,d-j}[Y,\dots ,Y,M^*]  \Biggr)
\end{align}
for $j=0,\dots ,d-2$, and all $M\in{\cal K}^d$, as follows from Corollary \ref{5-BMdensities2}.

For the mixed functionals
${V}_{m_1,\dots ,m_s,d-j}(K_1,\ldots,K_s,M^*)$ an integral
representation was established in \cite{HRW2018} (based on
\cite{HR2018}),
\begin{align*}
{V}_{m_1,\dots ,m_s,d-j}(K_1,\ldots,K_s,M^*) &= \idotsint f_{m_1,\dots ,m_s,d-j}(u_1,L_1,\dots ,u_s,L_s,u,L)\\
&\quad\times\psi_{d-j}(M^*,d(u,L))\,\psi_{m_s}(K_s,d(u_s,L_s))\cdots \psi_{m_1}(K_1,d(u_1,L_1))
\end{align*}
which involves a universal function $f_{m_1,\dots ,m_s,d-j}$ and the
flag measures (of corresponding degree) of $M^*,K_1,\dots ,K_s$. This
representation carries over to the densities
\begin{align*}
\overline{V}_{m_1,\dots ,m_s,d-j}[Y,\ldots,Y,M^*] &= \idotsint f_{m_1,\dots ,m_s,d-j}(u_1,L_1,\dots ,u_s,L_s,u,L)\\
&\quad\times \psi_{d-j}(M^*,d(u,L))\,\overline\psi_{m_s}[Y](d(u_s,L_s))\cdots \overline\psi_{m_1}[Y](d(u_1,L_1)),
\end{align*}
where
$\overline\psi_{i}[Y](\cdot) := \gamma \int \psi_{i}(K,\cdot)\,
\BQ(dK) $.  Since $f_{m_1,\dots ,m_s,d-j}$ is not continuous, an
additional approximation procedure is necessary here.  The idea is now
to use a recursion procedure, as in the isotropic situation described
above, in order to show that the equations in \eqref{5-nonisotropic}
with index $>j$ determine the density $\psi_{j}[Y](\cdot)$. Since the
bottom line, for $j=0$, is equivalent to
\begin{align*}
  &\overline V_0[Z] =\E^{-\overline V_d[Y]}\Bigg( \overline V_0[Y] -
  \sum_{s=2}^d
  \frac{(-1)^{s}}{s!} \sum_{\substack{m_1,\ldots ,m_s=1\\ m_1+\cdots +m_s=
	(s-1)d}}^{d-1}\overline V_{m_1,\dots, m_s}[Y,\dots,Y]\Bigg) ,
\end{align*}
we obtain $\overline V_0[Y] =\gamma$, if, by recursion, all measures $\psi_{j}[Y](\cdot)$, $j=1,\dots ,d$, are determined.

A major problem here is that by \eqref{5-nonisotropic} we do not get
$\overline\psi_{j}[Y](\cdot)$ directly, but the density
$\overline V[Y[j],M[d-j]]$ of the mixed volume $V(\cdot [j],M[d-j])$
(for all $M\in\cK^d$). For these mixed volumes, a similar flag
representation was proved in \cite{HRW2013}, which states that
 $$V(K[j],M[d-j]) = \iint g_{j} (u,L,u',L')\,\psi_j(K,d(u,L))\,\psi_{d-j}(M,d(u',L')),
$$
again with a universal function $g_j$. Since also this function is not
continuous, it is not apparent that the corresponding density
$$
\overline V[Y[j],M[d-j]] = \iint g_{j} (u,L,u',L')\,\overline\psi_j[Y](d(u,L))\,\psi_{d-j}(M,d(u',L'))
$$
determines $\overline\psi_{j}[Y](\cdot)$, as $M$ varies in $\cK^d$. In
fact, here a direct functional analytic argument was not available and
instead a deep result of Alesker was used which showed that linear
combinations of mixed volumes $ V(\cdot [j],M[d-j])$, $M\in\cK^d$, lie
dense in the Banach space $\val_j$ (of translation invariant,
continuous, $j$-homogeneous valuations). Therefore, all densities
$\overline \varphi [Y]$ with $\varphi$ in $\val_j$ are determined;
thus, in particular,
$$ 
\int g(u,L)\,\overline\psi_{j}[Y](d(u,L)),
$$
is determined for any convex function $g$ on the corresponding flag
space. This information determines $\overline\psi_{j}[Y](\cdot)$.
\end{proof}

As in the isotropic case, we get some more shape information from
Theorem \ref{5-mixedvol}, namely we obtain all expected flag measures
$\BE_{\BQ} \psi_j$, $j=1,\dots ,d-1$, (in fact, we get all
expectations $\BE_{\BQ} \varphi$ of translation invariant, continuous
valuations $\varphi$). For isotropic $Z$, these measures are
proportional to the Haar measure on the space of $j$-flags and the
proportionality constant is $\BE_{\BQ} V_j$, hence we do not get more
shape information in this case. However, if the distribution $\BQ$ is
concentrated on one shape $K_0$ say, the whole distribution $\BQ$ is
determined.

Because of the use of Alesker's approximation result, Theorem
\ref{5-mixedvol} does not seem lead to an estimation procedure for
$\gamma$ in a direct way.

\section{Variances and covariances}\label{secvariance}

We adapt the setting of Section \ref{chap5-sec:3}
and assume that the Boolean model $Z$ is stationary.
Since we are interested in second order properties
of $Z$ we assume that
\begin{align}\label{asecondmoment}
\int V_i(K)^2\,\BQ(dK) <\infty, \quad i=0,\ldots,d.
\end{align}
In view of the Steiner formula \eqref{steiner} this is stronger than
\eqref{eqn:AssumptionQ}.

In this section we focus on {\em geometric functionals}.
A measurable function $\varphi\colon\cR^d\to\R$ is said to be geometric if it is
  additive, locally bounded in the sense of \eqref{emar65}
and {\em translation invariant}, that is,
$\varphi(B+x)=\varphi(B)$, for any $B\in\mathcal{R}^d$ and any  $x\in\R^d$.
Examples can be found in  Examples \ref{einvolumes2} and \ref{exsupportf}.
Note that if $\varphi\colon\cR^d\to\R$ is additive, then $\varphi$ is translation invariant if
the restriction of $\varphi$ to convex bodies is translation invariant.

\subsection{A fundamental formula}

Let $\varphi$ be a geometric functional.
It follows from the local boundedness of $\varphi$
that
\begin{align}\label{esquare}
\BE \varphi(Z\cap W)^2<\infty;
\end{align}
see \cite[Lemma 3.3]{HLS16} and \cite[Corollary 4.10]{LastPenrose17}.
Given a second local functional $\psi$ it makes hence sense
to study the covariance
$\CV(\varphi(Z\cap rW),\psi(Z\cap rW))$ between
$\varphi(Z\cap rW)$ and $\psi(Z\cap rW)$.
If $V_d(W)>0$ we define the {\em asymptotic covariance}
\begin{align}\label{easymcov}
\sigma(\varphi,\psi):=\lim_{r\to\infty}\frac{\CV(\psi(Z\cap rW),\varphi(Z\cap rW))}{V_d(rW)}
\end{align}
whenever this limit exists and is independent of $W$.

For two geometric functionals $\varphi,\psi$, we define the inner product
\begin{align}
  \varrho(\varphi,\psi):=\sum_{n=1}^\infty \frac{\gamma}{n!} \iint&
\varphi(K_1\cap \cdots \cap K_n)\label{relrho}\\
& \psi(K_1\cap \cdots \cap K_n) \, \Theta^{n-1}(d(K_2,\hdots,K_n))
  \, \BQ(dK_1),\nonumber
\end{align}
whenever this infinite series is well defined.
The functional $\varphi^*\colon\cK^d\to\R$ defined by
$$
\varphi^*(K)=\BE \varphi(Z\cap K)-\varphi(K), \quad K\in\mathcal{K}^d,
$$
is again geometric and additive, see~\cite[(3.11)]{HLS16}.
The importance of these operations for the covariance analysis of
the Boolean model is due to the following result from \cite{HLS16}.

\begin{theorem}\label{thm:CovariancesGeneral}
Let $\varphi$ and $\psi$ be geometric functionals and let
$W\in\cK^d$ with $V_d(W)>0$. Then the limit
\eqref{easymcov} exists and is given by
\begin{align}\label{sigmaphipsi}
\sigma(\varphi,\psi)=\varrho(\varphi^*,\psi^*).
\end{align}
\end{theorem}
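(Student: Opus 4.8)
The strategy is to work with the Fock space (chaos) representation of square-integrable functionals of the Poisson particle process $Y$, as provided by \cite[Theorem 18.6]{LastPenrose17}. First I would note that by \eqref{esquare} both $\varphi(Z\cap rW)$ and $\psi(Z\cap rW)$ lie in $L^2(\BP)$, so each admits an orthogonal decomposition into multiple Wiener--It\^o integrals, with kernels given by the iterated difference operators $D^n_{K_1,\dots,K_n}\varphi(Z\cap rW)$. The key computation is to identify these kernels: applying the add-one-cost operator $D_K$ to $\varphi(Z\cap rW)$ and using additivity together with inclusion--exclusion (as in the proof of Theorem~\ref{5-Th1}) one finds that the $n$-th kernel is, up to lower-order boundary terms, an alternating sum of the form $(-1)^{n}\bigl[\varphi((K_1\cap\cdots\cap K_n)\cap rW)-\dots\bigr]$, which after taking expectation over the remaining randomness of $Z$ turns into an expression built from $\varphi^*$ evaluated on intersections $K_1\cap\cdots\cap K_n\cap rW$. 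The orthogonality of chaoses then gives $\CV(\varphi(Z\cap rW),\psi(Z\cap rW))=\sum_{n\ge 1}\frac{1}{n!}\int \langle f_n^{(r)},g_n^{(r)}\rangle\,d\Theta^n$, where $f_n^{(r)},g_n^{(r)}$ are the respective kernels.

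Next I would divide by $V_d(rW)$ and pass to the limit $r\to\infty$ term by term. Here one uses stationarity: the kernels depend on $rW$ only through the indicator that the intersection meets $rW$, and the typical-grain representation \eqref{Thetastat} lets one extract one spatial integral $\int_{\R^d}\I\{x\in rW\}\,dx=V_d(rW)$, which cancels the normalisation, leaving the translation-invariant core $\varphi^*(K_1\cap\cdots\cap K_n)\psi^*(K_1\cap\cdots\cap K_n)$ integrated against $\Theta^{n-1}$ and $\BQ$ exactly as in the definition \eqref{relrho} of $\varrho(\varphi^*,\psi^*)$. The boundary/lower-order terms (the ``$\dots$'' above, coming from grains that straddle $\partial(rW)$) contribute only a surface-order error $O(r^{d-1})$ and hence vanish after division by $V_d(rW)\sim c\,r^d$; this is a standard but slightly technical estimate using local boundedness of $\varphi$ and the local finiteness \eqref{locallyfinite} of $\Theta$.

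To justify the interchange of limit and infinite sum, I would produce a summable dominating bound for the normalised $n$-th chaos contributions, uniformly in $r$. This is where the second-moment assumption \eqref{asecondmoment} and local boundedness are essential: using \eqref{emar65} one bounds $|\varphi^*(M)|$ by a constant depending only on the diameter of $M$, and the combinatorial factor $1/n!$ together with the local finiteness of $\Theta$ yields a bound of the form $c_0\,\overline V_d[Y]^{\,n}/n!$ (or a similar geometric-over-factorial expression), which is summable. This simultaneously shows that $\varrho(\varphi^*,\psi^*)$ is well defined (the series in \eqref{relrho} converges absolutely) and that dominated convergence applies.

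\textbf{Main obstacle.} The genuinely delicate part is the precise identification of the Fock kernels $D^n_{K_1,\dots,K_n}\varphi(Z\cap rW)$ and, in particular, disentangling the ``interior'' contribution (which produces the clean $\varphi^*$ on the $n$-fold intersection) from the boundary contributions, and then showing the latter are of lower order after normalisation. One must be careful that the difference operator acts on $Z$, not directly on the deterministic $\varphi$, so the expectation that turns $\varphi$ into $\varphi^*$ has to be taken at the right stage, and one must check that the resulting kernels are themselves square-integrable with respect to $\Theta^n$ before invoking the chaos expansion. Everything else — the cancellation of $V_d(rW)$, the term-by-term limit, the summability — is routine once the kernel structure is correctly pinned down.
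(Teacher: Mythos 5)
Your proposal follows essentially the same route as the paper: the Fock space covariance formula with kernels $\BE D^n_{K_1,\ldots,K_n}\varphi(Z\cap rW)=(-1)^n\varphi^*(rW\cap K_1\cap\cdots\cap K_n)$, extraction of the factor $V_d(rW)$ via translation invariance and a change of variables, and a dominated-convergence argument to pass to the limit term by term. The one point worth tightening is your ``main obstacle'' paragraph: the kernel identity just quoted is \emph{exact} --- by additivity, $D_{K_1}\varphi(Z\cap K_0)=\varphi(K_0\cap K_1)-\varphi(Z\cap K_0\cap K_1)$, and iterating gives the formula with no boundary error at all --- so the boundary approximation you describe does not occur at the kernel level; it only enters afterwards, when one replaces $\varphi^*(rW\cap K_1^{x_1}\cap\cdots\cap K_n^{x_n})$ by $\varphi^*(K_1^{x_1}\cap\cdots\cap K_n^{x_n})\,\I\{K_1+x_1\subset rW\}$ in order to carry out the change of variables that produces the cancelling factor $V_d(rW)$.
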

\begin{proof} (Sketch) From the Fock space representation of Poisson
functionals (see \cite[Theorem 18.6]{LastPenrose17}) and additivity
of $\varphi$ and $\psi$ we obtain for each $K_0\in\cK^d$ that
\begin{align}\label{Fockadditive}\notag
\CV&(\varphi(Z\cap K_0),\psi(Z\cap K_0))\\ \notag
&=\sum_{n=1}^\infty \frac{\gamma^n}{n!}
\iint\varphi^*(K_0\cap K_1^{x_1}\cap \cdots \cap K_n^{x_n})
\psi^*(K_0\cap K_1^{x_1}\cap \cdots \cap K_n^{x_n})\\
&\qquad\qquad\qquad\qquad\qquad \times d(x_1,\ldots,x_n)\,\BQ^{n}(d(K_1,\cdots,K_n));
\end{align}
cf.\ \cite[(22.34)]{LastPenrose17} for the case $\varphi=\psi$.
Replacing here $K_0$ by $rW$ for some large $r>0$ this equals
approximately
\begin{align*}
\sum_{n=1}^\infty \frac{\gamma^n}{n!}
\iint\varphi^*(K_1^{x_1}\cap \cdots \cap K_n^{x_n})
\psi^*(K_1^{x_1}\cap \cdots &\cap K_n^{x_n})\I\{K_1+x_1\subset rW\}\\
&\times d(x_1,\ldots,x_n)\,\BQ^{n}(d(K_1,\cdots,K_n)).
\end{align*}
By a change of variables and translation invariance of $\varphi$ and $\psi$
the above series comes to
\begin{align*}
\sum_{n=1}^\infty \frac{\gamma^n}{n!}
\iiint\varphi^*(K_1\cap K_2^{y_2} \cap\cdots \cap K_n^{y_n})
\psi^*&(K_1\cap K_2^{y_2}\cap \cdots \cap K_n^{y_n})\I\{K_1+y_1\subset rW\}\\
&\times dy_1\,d(y_2,\ldots,y_n)\,\BQ^{n}(d(K_1,\cdots,K_n)).
\end{align*}
Making the change of variables $y:=r^{-1}y_1$ and dividing by $V_d(rW)$
we see that $$V_d(rW)^{-1}\CV(\varphi(Z\cap rW),\psi(Z\cap rW))$$ approximately equals
\begin{align*}
V_d(W)^{-1} \sum_{n=1}^\infty \frac{\gamma^n}{n!}
\iiint&\varphi^*(K_1\cap K_2^{y_2} \cap\cdots \cap K_n^{y_n})
\psi^*(K_1\cap K_2^{y_2}\cap \cdots \cap K_n^{y_n})\\
&\times \I\{r^{-1}K_1+y\subset W\}\,dy\,d(y_2,\ldots,y_n)\,\BQ^{n}(d(K_1,\cdots,K_n)).
\end{align*}
As $r\to\infty$ this converges to the right-hand side of \eqref{sigmaphipsi}.
\end{proof}

\subsection{Asymptotic covariances for intrinsic volumes}

As a warm-up we may consider the geometric functional
$V_d$. In this case
\begin{align}\label{Vd*}
V_d^*(K)=\BE V_d(Z\cap K)-V_d(K)=-(1-p)V_d(K),\quad K\in\cK^d,
\end{align}
where $p:=\BP(0\in Z)$ is the volume fraction of $Z$; see \eqref{evolumef}.
In order to apply \eqref{sigmaphipsi} to compute
the asymptotic variance $\sigma(V_d,V_d)$ of the volume
we need to compute
\begin{align*}
\varrho(V_d,V_d)=\sum^\infty_{n=1}\frac{\gamma}{n!}\iint
 V_{d}(K_1\cap\cdots\cap K_n)^2
\,\Theta^{n-1}(d(K_2,\dots,K_n))\, \mathbb{Q}(dK_1).
\end{align*}
By Fubini's theorem the above series can be written as
\begin{align*}
&\sum^\infty_{n=1}\frac{\gamma^{n}}{n!}\idotsint
 \I\{y\in K_1\cap (K_2+x_2)\cap\cdots\cap(K_n+x_n)\}\\
 &\times\I\{z\in K_1\cap (K_2+x_2)\cap\cdots\cap(K_n+x_n)\} \, dy\, dz\, dx_2\ldots dx_n\, \BQ^n(d(K_1,\ldots,K_n))\\
&=\sum^\infty_{n=1}\frac{\gamma^{n}}{n!}\iiint V_d((K_2-y)\cap (K_2-z))\cdots
V_d((K_n-y)\cap (K_n-z))\\
&\qquad\qquad\qquad\qquad\times\I\{y\in K_1\}\I\{z\in K_1\}\, dy\, dz\, \BQ^n(d(K_1,\ldots,K_n))\\
&=\sum^\infty_{n=1}\frac{\gamma^{n}}{n!}\iiint (\BE V_d(Z_0\cap (Z_0+y-z)))^{n-1}
 \I\{y,z\in K_1\}\, dy\, dz\, \mathbb{Q}(dK_1)\\
&=\sum^\infty_{n=1}\frac{\gamma^{n}}{n!}\iiint(\BE V_d(Z_0\cap (Z_0+y)))^{n-1}
\I\{y+z\in K_1\}\I\{z\in K_1\}\, dy\, dz\, \BQ(dK_1).
\end{align*}
Hence we obtain that
\begin{align}\label{rhodd}
\varrho(V_d,V_d)=\int\big(e^{C_d[Y](y)}-1\big)\, dy,
\end{align}
where 
the expected {\em covariogram} $C_d[Y]$ of
the typical grain is the function on $\R^d$ defined by
$$
C_d[Y](y):=\gamma\,\BE V_d(Z_0\cap (Z_0+y)),\quad y\in\R^d.
$$
By \eqref{sigmaphipsi}, \eqref{Vd*} and \eqref{rhodd}
the asymptotic variance of the volume is given by
\begin{align}\label{asvarvolume}
\sigma(V_d,V_d)
= (1-p)^2\int\big(e^{C_d[Y](y)}-1\big)\, dy,
\end{align}
Formula \eqref{asvarvolume} can be derived in a simpler way,
without using Theorem \ref{thm:CovariancesGeneral} and
without convexity assumptions on the grain distribution; see
\cite[Proposition 22.1]{LastPenrose17}. The above calculation,
however, can be generalized to other geometric functionals.

To derive an expression for $\sigma(V_{d-1},V_d)$ we need to compute
$\rho(V_{d-1},V_d)$. To do so we use
that $V_{d-1}(K)=\Phi_{d-1}(K,\R^d)$ for each $K\in\cK^d$, where
$\Phi_i(K,\cdot)=\Lambda_i(K,\cdot\times\mathbb{S}^{d-1})$,
for $i\in\{0,\ldots,d-1\}$,
is the $i$-th curvature measure of $K$; see, e.g., \cite[Section 14.2]{SW2008} or \cite[Section 4.2]{S2014}.
Let $B^\circ$ denote the interior of a set $B\subset\R^d$.
If
\begin{align}\label{eintone}
\BP(Z^\circ_0\ne\emptyset)=1
\end{align}
it can be shown that
\begin{align}\label{elemmacurv}
\Phi_{d-1}(K_1\cap\cdots \cap K_n,\cdot)
=\sum^n_{k=1}\int \I\{x\in \cdot \cap_{r\ne k}K_r^\circ\}\Phi_{d-1}(K_k,dx)
\end{align}
for $(\BQ\otimes\Theta^{n-1})$-a.e.\ $(K_1,\ldots,K_n)$ and all $n\in\N$; see \cite[Lemma 5.3]{HLS16}.
Using these facts it is not hard to see that
\begin{align*}
&\iint V_{d-1}(K_1\cap \cdots \cap K_n)V_d(K_1\cap \cdots \cap K_n) \, \Theta^{n-1}(d(K_2,\hdots,K_n))
  \, \BQ(dK_1)\\
&=n \iiiint \I\{y\in (K^\circ_2+x_2)\cap\cdots\cap (K^\circ_n+x_n),
z\in K_1\cap (K_2+x_2)\cap\cdots\cap (K_n+x_n)\}\\
&\qquad\qquad \times \Phi_{d-1}(K_1,dy)\,dz\,d(x_2,\ldots,x_n)\,\BQ^n(d(K_1,\ldots,K_n)).
\end{align*}
Therefore we obtain similarly as above that
$\rho(V_{d-1},V_d)$ equals
\begin{align*}
\sum^\infty_{n=1}\frac{\gamma^n}{(n-1)!}
\iint \I\{z\in K\} (\BE V_d((Z_0-y)\cap (Z_0-z)))^{n-1}\,\Phi_{d-1}(K,dy)\,dz\,\BQ(dK).
\end{align*}
It follows that
\begin{align}\label{rhod-1d}
\rho(V_{d-1},V_d)
=\int e^{C_d[Y](y-z)} \,M_{d-1,d}[Y](d(y,z)),
\end{align}
where the measure $M_{d-1,d}[Y]$ is defined by
\begin{align*}
M_{d-1,d}[Y]:&=\gamma \iint \I\{(y,z)\in\cdot\}\I\{z\in K\}\,\Phi_{d-1}(K,dy)\,dz\,\BQ(dK)\\
&=\gamma \iint \I\{(y,z)\in\cdot\} \,\Phi_{d-1}(K,dy)\,\Phi_d(K,dz)\,\BQ(dK),
\end{align*}
where $\Phi_d(K,\cdot)$ equals $d$-dimensional Lebesgue measure restricted to $K$.

If \eqref{eintone} holds, we obtain
similarly to the derivation of \eqref{rhod-1d} that
\begin{align}\label{rhod-1d-1}\notag
\rho(V_{d-1},V_{d-1})&=\gamma \int e^{C_d[Y](y-z)} C_{d-1}[Y](y-z)\,
M_{d-1,d}[Y](d(y,z))\\
&\quad+ \gamma \int e^{C_d[Y](y-z)}\,M_{d-1,d-1}[Y](d(y,z)),
\end{align}
where the measure $M_{d-1,d-1}[Y]$ is defined by
\begin{align*}
M_{d-1,d-1}[Y]:=\gamma\iint \I\{(y,z)\in\cdot\}\,\Phi_{d-1}(K,dy)\,\Phi_{d-1}(K,dz)\,\BQ(dK)
\end{align*}
and the function $C_{d-1}[Y]$ is defined by
\begin{align*}
C_{d-1}[Y](y):=\gamma\int \Phi_{d-1}(K,K^\circ+y)\,\BQ(dK),\quad y\in\R^d.
\end{align*}

We can now prove a result from \cite{HLS16}; see \cite[Theorem 2]{HKLS2017}
for a slight generalization.

\begin{theorem}\label{tcovvolumesurface} It holds that
\begin{align*}
\sigma(V_{d-1},V_d)&= -(1-p)^2\, \overline{V}_{d-1}[Y] \int \big(e^{C_d[Y](x)}-1\big) \, dx\\
& \quad + (1-p)^2 \int e^{C_d[Y](x-y)} \, M_{d-1,d}[Y](d(x,y)).
\end{align*}
If, in addition, \eqref{eintone} holds, then
\begin{align*}
\sigma(V_{d-1},V_{d-1})&= (1-p)^2(\overline{V}_{d-1}[Y])^2\int\big(e^{C_d[Y](x)}-1\big) \, dx\\
&\quad  + (1-p)^2\int e^{C_d[Y](x-y)} C_{d-1}[Y](x-y) \, M_{d-1,d}[Y](d(y,x))\\
&\quad  -2(1-p)^2\overline{V}_{d-1}[Y]\int e^{C_d[Y](x-y)}\, M_{d-1,d}[Y](d(y,x))\\
&\quad  + (1-p)^2 \int  e^{C_d[Y](x-y)} \,M_{d-1,d-1}[Y](d(x,y)).
\end{align*}
\end{theorem}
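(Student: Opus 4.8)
The plan is to apply Theorem~\ref{thm:CovariancesGeneral}, which identifies both asymptotic covariances with values of the bilinear form $\varrho$ on the starred functionals: $\sigma(V_{d-1},V_d)=\varrho(V_{d-1}^*,V_d^*)$ and $\sigma(V_{d-1},V_{d-1})=\varrho(V_{d-1}^*,V_{d-1}^*)$. Since $V_d$ and $V_{d-1}$ are geometric functionals, so are $V_d^*$ and $V_{d-1}^*$, and under the second moment assumption \eqref{asecondmoment} the defining series of $\varrho$ converges absolutely on all the inputs that occur below, so $\varrho$ may be expanded bilinearly. One input is already explicit: by \eqref{Vd*} we have $V_d^*=-(1-p)V_d$. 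The crux of the proof is therefore an explicit formula for $V_{d-1}^*$.

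To obtain it, I would use that $V_{d-1}$ is a $(d-1)$-homogeneous, translation invariant, continuous valuation, so by Example~\ref{einvolumes} and Remark~\ref{rvaluation} it satisfies a translative integralgeometric principle of order $d-1$ with $(V_{d-1})_{d-1}=V_{d-1}$, and conditions $\mathbf{I}(\Theta)$, $\mathbf{IP}(\BQ)$ hold. Applying Corollary~\ref{csurface} with $\varphi=V_{d-1}$ gives
\begin{align*}
\BE V_{d-1}(Z\cap K)=V_{d-1}(K)\bigl(1-\E^{-\overline V_d[Y]}\bigr)+\E^{-\overline V_d[Y]}\,\overline\varphi_{d,d-1}[K,Y].
\end{align*}
The translative integral formula $\int V_{d-1}(K\cap M^x)\,dx=V_{d-1}(K)V_d(M)+V_d(K)V_{d-1}(M)$, combined with the decomposability property \eqref{decompos}, forces $\varphi_{(d,d-1)}(K,M)=V_d(K)V_{d-1}(M)$ for $\varphi=V_{d-1}$, whence $\overline\varphi_{d,d-1}[K,Y]=\overline V_{d-1}[Y]\,V_d(K)$. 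Since $1-p=\E^{-\overline V_d[Y]}$, this yields $V_{d-1}^*(K)=(1-p)\bigl(\overline V_{d-1}[Y]\,V_d(K)-V_{d-1}(K)\bigr)$.

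It then remains to expand by bilinearity of $\varrho$ and substitute the expressions already computed in the text. For the first identity,
\begin{align*}
\sigma(V_{d-1},V_d)=-(1-p)^2\bigl(\overline V_{d-1}[Y]\,\varrho(V_d,V_d)-\varrho(V_{d-1},V_d)\bigr),
\end{align*}
and inserting \eqref{rhodd} and \eqref{rhod-1d} gives the asserted expression. For the second, writing $a:=\overline V_{d-1}[Y]$,
\begin{align*}
\sigma(V_{d-1},V_{d-1})=(1-p)^2\bigl(a^2\,\varrho(V_d,V_d)-2a\,\varrho(V_{d-1},V_d)+\varrho(V_{d-1},V_{d-1})\bigr),
\end{align*}
and inserting \eqref{rhodd}, \eqref{rhod-1d} and \eqref{rhod-1d-1} produces the second formula; the extra hypothesis \eqref{eintone} enters at precisely this point, since it underlies the curvature-measure identity \eqref{elemmacurv} used in the evaluation of $\varrho(V_{d-1},V_{d-1})$.

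I expect the main obstacle to be getting $V_{d-1}^*$ right, i.e.\ correctly pinning down the mixed functional $\varphi_{(d,d-1)}$, and, for the variance of the surface area, the legitimacy of the manipulations behind \eqref{rhod-1d-1}: the decomposition of $\Phi_{d-1}$ of an intersection of grains, where convexity and the non-degeneracy assumption \eqref{eintone} are used and where the measures $M_{d-1,d}[Y]$, $M_{d-1,d-1}[Y]$ and the function $C_{d-1}[Y]$ arise. Everything else is bilinear bookkeeping, with only mild care needed for the normalising constants (powers of $\gamma$ and the factor $1-p=\E^{-\overline V_d[Y]}$).
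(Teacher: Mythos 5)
Your proposal is correct and follows essentially the same route as the paper's own proof: derive $V_{d-1}^*(K)=-(1-p)V_{d-1}(K)+(1-p)\overline V_{d-1}[Y]V_d(K)$ from Corollary~\ref{csurface}, combine with $V_d^*=-(1-p)V_d$ and Theorem~\ref{thm:CovariancesGeneral}, and expand $\varrho$ bilinearly using \eqref{rhodd}, \eqref{rhod-1d}, \eqref{rhod-1d-1}.
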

\begin{proof}
By Corollary \ref{csurface} and \eqref{evolumefraction},
\begin{align}\label{eqthis}
V^*_{d-1}(K)=-(1-p)V_{d-1}(K)+(1-p) \overline V_{d-1}[Y]V_d(K),\quad K\in\cK^d.
\end{align}
Combining \eqref{eqthis} and \eqref{Vd*} with   \eqref{rhod-1d},
\eqref{rhod-1d-1} and \eqref{sigmaphipsi}, we obtain the assertion.
\end{proof}

Formula \eqref{elemmacurv} can be formulated for
all curvature measures. Therefore it possible
to derive integral representations for
all asymptotic covariances $\rho(V_i,V_j)$.
The resulting formulas, however, are less explicit than those
in Theorem \ref{tcovvolumesurface}.
We refer to \cite{HLS16} for the details.
The special case of a planar Boolean model of aligned rectangles
has been treated in \cite{HKLS2017}. Even though this
model is not isotropic, the formulas become surprisingly explicit
in this case.

\subsection{The isotropic case}

In this subsection we assume the Boolean model $Z$ to be
stationary and isotropic.
For all $j\in\{0,\ldots,d-1\}$ and $k\in\{j,\ldots,d\}$
we define a polynomial $P_{j,k}$ on  $\R^{d-j}$ of degree $k-j$ by
\begin{align}\label{Polyjl}
P_{j,k}(t_j,\ldots,t_{d-1}):=\I\{k=j\}+c^k_j\sum^{k-j}_{s=1}
\frac{(-1)^s}{s!}
\sum^{d-1}_{\substack{m_1,\ldots,m_s=j\\ m_1+\ldots+m_s=sd+j-k}}
\prod^s_{i=1}c^{m_i}_dt_{m_i},
\end{align}
where the constants $c^i_j$ are given by \eqref{ecij}.
We also define $P_{d,d}:=1$. The following result is taken
from \cite{HLS16}; see also \cite{Mecke01}.

\begin{theorem}\label{tsigmaij} Let $i,j\in\{0,\ldots,d\}$.
Then
\begin{align*}
\sigma(V_i,V_j)=(1-p)^2\sum^{d}_{k=i}\sum^{d}_{l=j}P_{i,k}(\overline{V}_i[Y],\ldots,\overline{V}_{d-1}[Y])
P_{j,l}(\overline{V}_j[Y],\ldots,\overline{V}_{d-1}[Y])\rho(V_k,V_l),
\end{align*}
\end{theorem}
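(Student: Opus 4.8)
The plan is to derive Theorem~\ref{tsigmaij} from the general formula \eqref{sigmaphipsi} by computing the reduced functionals $V_j^*$ in the stationary and isotropic case, and then expanding the bilinear form $\varrho$ using its bilinearity. First I would apply Theorem~\ref{5-Th.samplingwindowiso} (the isotropic mean-value formula) to the intrinsic volume $\varphi=V_j$. By Remark~\ref{rkinematic}, $V_j$ satisfies a kinematic integralgeometric principle with $\varphi_m=c^{m+j}_j V_{m+j}$ for $m\le d-j$ and $\varphi_m=0$ otherwise, and it satisfies $\mathbf{I}(\Theta)$ by Example~\ref{einvolumes}. Plugging this into \eqref{5-isotropicfinite} and using \eqref{evolumefraction} to write $\E^{-\overline V_d[Y]}=1-p$, one obtains after collecting terms an expression of the form
\begin{align*}
\BE V_j(Z\cap K_0)=\sum_{k=j}^d Q_{j,k}\big(\overline V_j[Y],\ldots,\overline V_{d-1}[Y]\big)\,V_k(K_0)
\end{align*}
for suitable polynomials; comparing with the definition $V_j^*(K_0)=\BE V_j(Z\cap K_0)-V_j(K_0)$ and matching the combinatorial coefficients with \eqref{Polyjl} should give
\begin{align*}
V_j^*(K_0)=(1-p)\sum_{k=j}^d P_{j,k}\big(\overline V_j[Y],\ldots,\overline V_{d-1}[Y]\big)\,V_k(K_0)-V_j(K_0)+(1-p)\I\{k=j\}\text{-term},
\end{align*}
i.e.\ more cleanly $V_j^*=(1-p)\sum_{k=j}^{d}P_{j,k}(\overline V_j[Y],\ldots,\overline V_{d-1}[Y])\,V_k$, where the $k=j$ contribution in $P_{j,j}$ is normalized (via the indicator $\I\{k=j\}$ and the leading term) precisely so that the $-V_j(K_0)$ is absorbed. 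This identification of $V_j^*$ as a specific polynomial-coefficient linear combination of $V_j,\ldots,V_d$ is the conceptual heart of the argument.

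Once $V_i^*=(1-p)\sum_{k=i}^d P_{i,k}(\cdots)V_k$ and $V_j^*=(1-p)\sum_{l=j}^d P_{j,l}(\cdots)V_l$ are established, I would substitute both into \eqref{sigmaphipsi}, namely $\sigma(V_i,V_j)=\varrho(V_i^*,V_j^*)$. Using that $\varrho$ as defined in \eqref{relrho} is bilinear in its two arguments (which follows directly from the linearity of the integrand in $\varphi$ and in $\psi$, granted the series is well-defined, which is guaranteed here by the second moment assumption \eqref{asecondmoment} together with local boundedness of intrinsic volumes), and that the coefficients $(1-p)P_{i,k}(\overline V_i[Y],\ldots)$ are deterministic constants that pull out of the integrals, one obtains
\begin{align*}
\sigma(V_i,V_j)=(1-p)^2\sum_{k=i}^d\sum_{l=j}^d P_{i,k}(\overline V_i[Y],\ldots,\overline V_{d-1}[Y])\,P_{j,l}(\overline V_j[Y],\ldots,\overline V_{d-1}[Y])\,\varrho(V_k,V_l),
\end{align*}
which is exactly the claimed formula with $\rho=\varrho$.

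The main obstacle I anticipate is the bookkeeping in the first step: extracting from \eqref{5-isotropicfinite} the explicit polynomial coefficient of each $V_k(K_0)$ and verifying that it matches $P_{j,k}$ as defined in \eqref{Polyjl}, including the $(-1)^s/s!$ weights, the constants $c^k_j$ and $c^{m_i}_d$, and the constraint $m_1+\cdots+m_s=sd+j-k$. One must carefully track how the homogeneity degrees $(m+j)$ of the $\varphi_m$ translate into the index $k$, how the products $\prod c_{m_i}\overline V_{m_i}[Y]$ in \eqref{5-isotropicfinite} become the products in $P_{j,k}$, and how the leading ($s=0$ or $k=j$) term together with the $-V_j(K_0)$ and the factor $(1-p)=\E^{-\overline V_d[Y]}$ combine to produce precisely the normalization $\I\{k=j\}$ appearing in \eqref{Polyjl}. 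The second step (bilinear expansion) is then essentially formal. One should also remark that, unlike in Theorem~\ref{tcovvolumesurface}, no extra nondegeneracy hypothesis such as \eqref{eintone} is needed here, because the curvature-measure decomposition \eqref{elemmacurv} is only used inside the evaluation of the individual $\rho(V_k,V_l)$, which are left in abstract form in the statement.
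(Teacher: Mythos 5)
Your proposal matches the paper's proof exactly: apply Theorem~\ref{5-Th.samplingwindowiso} with Remark~\ref{rkinematic} to express $V_j^*$ as a linear combination of $V_j,\ldots,V_d$ with coefficients built from the polynomials $P_{j,k}$ (the paper's formula \eqref{e4.56}), then substitute into $\sigma(V_i,V_j)=\varrho(V_i^*,V_j^*)$ and expand by bilinearity. One small slip: the correct identity is $V_j^*=-(1-p)\sum_{k=j}^d P_{j,k}(\overline V_j[Y],\ldots,\overline V_{d-1}[Y])\,V_k$, with a minus sign, which comes from rewriting $(1-\E^{-\overline V_d[Y]})=p$, noting $(-1)^{s-1}=-(-1)^s$ to convert the alternating signs of \eqref{5-isotropicfinite} into those of \eqref{Polyjl}, and observing that the $k=j$ indicator in $P_{j,j}$ absorbs the $-(1-p)V_j(K_0)$ term; your version omits the overall minus, but since it enters the bilinear form quadratically the final formula is unaffected.
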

\begin{proof}
By Theorem \ref{5-Th.samplingwindowiso} and Remark \ref{rkinematic}
we have for all $j\in\{0,\ldots, d\}$ and $K\in\cK^d$ that
\begin{align}\label{e4.56}
V^*_j(K)=-(1-p)\sum^d_{k=j} V_k(K)P_{j,k}(\overline{V}_j[Y],\ldots,\overline{V}_{d-1}[Y]).
\end{align}
Using this formula in \eqref{sigmaphipsi} we obtain the assertion.
\end{proof}

Theorem \ref{tsigmaij} requires the numbers $\rho(V_i,V_j)$.
Thanks to our isotropy assumption we can complement
\eqref{rhodd}, \eqref{rhod-1d} and \eqref{rhod-1d-1}
as follows.

\begin{proposition}\label{prho0j} We have that
\begin{align}\label{erho0d}
\rho(V_0,V_d)=e^{\overline{V}_d[Y]}-1
\end{align}
and, for $i\in\{0,\ldots,d-1\}$,
\begin{align}\label{erho0i}
\rho(V_0,V_i)=e^{\overline{V}_d[Y]}
c^d_i\sum^{d-i}_{l=1}\frac{1}{l!}\sum^{d-1}_{\substack{m_1,\ldots,m_l=i\\ m_1+\cdots+m_l=(l-1)d+i}}
\prod^l_{j=1}c^{m_j}_d\overline{V}_{m_j}[Y].
\end{align}
The formulas for $\rho(V_0,V_d)$ and $\rho(V_{d-1},V_d)$ are true without isotropy assumption.
\end{proposition}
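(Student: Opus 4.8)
The plan is to evaluate $\varrho(V_0,V_j)$ straight from the series \eqref{relrho}, using that on convex bodies $V_0$ is the indicator of non-emptiness, so $V_0(L)V_j(L)=V_j(L)$ for every $L\in\cK^d$ (both sides vanish when $L=\emptyset$, and $V_0(L)=1$ otherwise). Since an intersection of convex bodies is again convex, this gives, for every $j\in\{0,\ldots,d\}$,
\[
\varrho(V_0,V_j)=\sum_{n=1}^\infty\frac{\gamma}{n!}\iint V_j(K_1\cap\cdots\cap K_n)\,\Theta^{n-1}(d(K_2,\ldots,K_n))\,\BQ(dK_1),
\]
so the whole task reduces to computing the inner iterated integral. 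All interchanges of sums and integrals below are justified because, under \eqref{asecondmoment} and the local boundedness of the $V_j$, the series \eqref{relrho} converges absolutely (see \cite{HLS16}).

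For $j=d$ no isotropy is needed. Inserting the stationary representation \eqref{Thetastat} and applying the elementary Fubini identity $\int V_d(A\cap(K+x))\,dx=V_d(A)V_d(K)$ repeatedly, the iterated translative integral of $V_d$ factorizes completely, so the inner integral equals $\gamma^{n-1}\big(\int V_d(K)\,\BQ(dK)\big)^{n}=\gamma^{-1}\overline V_d[Y]^{n}$. Hence $\varrho(V_0,V_d)=\sum_{n\ge 1}\overline V_d[Y]^{n}/n!=\E^{\overline V_d[Y]}-1$, which is \eqref{erho0d}; the formula for $\varrho(V_{d-1},V_d)$ was already obtained in \eqref{rhod-1d} without an isotropy assumption.

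For $j\le d-1$ I would invoke isotropy. Since $\BQ$ is rotation invariant, $\Theta$ is motion invariant and can be written as $\Theta=\gamma\iint\I\{gK\in\,\cdot\,\}\,\mu(dg)\,\BQ(dK)$ with $\mu$ the Haar measure on $G_d$ used in \eqref{5-isodint-2}. Thus $\Theta^{n-1}$ may be replaced by $\gamma^{n-1}\mu^{n-1}\otimes\BQ^{n-1}$, and the inner integral becomes $\gamma^{n-1}\int\!\cdots\!\int V_j(K_1\cap g_2K_2\cap\cdots\cap g_nK_n)\,\mu^{n-1}(d(g_2,\ldots,g_n))\,\BQ^n$. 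Applying the kinematic integralgeometric principle \eqref{5-isodint-2} to $\varphi=V_j$ — whose mixed functionals are $(V_j)_r=c^{r+j}_jV_{r+j}$ for $r\le d-j$ and $0$ otherwise, with $c_r=c^r_d$ (Remark \ref{rkinematic}) — turns the $\mu^{n-1}$-integral into a finite sum over multi-indices $(r_1,\ldots,r_n)$ with $r_1\le d-j$ and $r_1+\cdots+r_n=(n-1)d$; integrating over $\BQ^n$ and using $\int V_m\,d\BQ=\gamma^{-1}\overline V_m[Y]$ yields
\[
\varrho(V_0,V_j)=\sum_{n\ge 1}\frac{1}{n!}\sum_{\substack{r_1\le d-j,\ r_2,\ldots,r_n\le d\\ r_1+\cdots+r_n=(n-1)d}}c^{r_1+j}_j\,\overline V_{r_1+j}[Y]\prod_{i=2}^{n}c^{r_i}_d\,\overline V_{r_i}[Y].
\]

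The last step is to reorganize this double series into \eqref{erho0i}. Put $m_1:=r_1+j$ and $m_i:=r_i$ for $i\ge 2$, so the constraint becomes $m_1+\cdots+m_n=(n-1)d+j$, i.e.\ $\sum_i(d-m_i)=d-j$ with all deficits nonnegative; in particular $m_i\ge j$ for every $i$, so the asymmetric range restrictions become vacuous, and using $c^{m_1}_j=c^d_j\,c^{m_1}_d$ the summand turns into the fully symmetric $c^d_j\prod_i c^{m_i}_d\,\overline V_{m_i}[Y]$. Splitting the indices according to the number $p:=n-l\ge 0$ of those with $m_i=d$ (each contributing $c^d_d\,\overline V_d[Y]=\overline V_d[Y]$), the remaining $l$ indices lie in $\{j,\ldots,d-1\}$ with sum $(l-1)d+j$, and since $\sum_i(d-m_i)=d-j>0$ there is at least one of them, so $1\le l\le d-j$. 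Because $\binom{n}{l}/n!=1/(l!\,p!)$, summing over $p\ge 0$ contributes the factor $\sum_{p\ge 0}\overline V_d[Y]^{p}/p!=\E^{\overline V_d[Y]}$, and what remains is precisely the right-hand side of \eqref{erho0i}. The main effort is this multi-index bookkeeping together with the clean justification of the translative-to-kinematic passage (motion invariance of $\Theta$ and the chosen Haar normalization); the convergence issues are already settled by \eqref{asecondmoment}.
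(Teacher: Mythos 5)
Your proposal is correct and follows essentially the same route as the paper: reduce $\varrho(V_0,V_j)$ to the iterated integral of $V_j$ via $V_0\cdot V_j=V_j$ on convex bodies, invoke isotropy to replace $\Theta^{n-1}$ by $\gamma^{n-1}\mu^{n-1}\otimes\BQ^{n-1}$, apply the iterated kinematic formula (Remark~\ref{rkinematic}), and reorganize the resulting multi-index sum by the number of indices below $d$ so that the $m_i=d$ contributions resum to $\E^{\overline V_d[Y]}$. The only cosmetic difference is bookkeeping — you split $n=l+p$ directly, while the paper fixes $n$, sums over the number $s$ of indices $<d$, and then resums over $n\ge s$ — and you supply the explicit derivation of the symmetric form from \eqref{5-isodint-2} where the paper simply cites \cite[Theorem~5.1.5]{SW2008}.
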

\begin{proof} The proof of \eqref{erho0d} is easy and left to the reader.

Let $i\in\{0,\ldots,d-1\}$. Since $\BQ$ is concentrated on nonempty grains we have that
\begin{align*}
\rho(V_0,V_i)=\sum^\infty_{n=1}\frac{\gamma}{n!}\iint
 V_i(K_1\cap\cdots\cap K_n)\,\Theta^{n-1}(d(K_2,\dots,K_n))\, \mathbb{Q}(dK_1).
\end{align*}
By isotropy, this equals
\begin{align*}
\sum^\infty_{n=1}\frac{\gamma^n}{n!}\iint
 V_i(K_1\cap g_2K_2\cap\cdots\cap g_n K_n)\,\mu^{n-1}(d(g_2,\ldots,g_n))\,\BQ^n(d(K_1,\dots,K_n)).
\end{align*}
By the principal kinematic formula \eqref{5-isodint-2}
and Remark \ref{rkinematic} (see also \cite[Theorem 5.1.5]{SW2008}) this equals
\begin{align*}
\overline{V}_i[Y]+\sum^\infty_{n=2}\frac{\gamma^n}{n!}\int
c^d_i\sum^d_{\substack{m_1,\ldots,m_n=i\\ m_1+\cdots+m_n=(n-1)d+i}}
\prod^n_{j=1}c^{m_j}_d{V}_{m_j}(K_j)\,\BQ^{n}(d(K_1,\dots,K_n)).
\end{align*}
We now argue as in the proof of Theorem \ref{5-samplingwindow}.
If $m_1+\cdots +m_n=(n-1)d+i$ we can consider the indices $m_k$
which are smaller than $d$. The number $s$ of those indices ranges
from $1$ to $d-i$. Since $\binom{n}{s}=0$ if $s>n$ and $c^d_d=1$, we thus obtain that
\begin{align*}
\rho(V_0,V_i)&=\overline{V}_i[Y]+\sum^\infty_{n=2}\frac{1}{n!}
c^d_i\sum^{d-i}_{s=1}\binom{n}{s}\overline{V}_d[Y]^{n-s}
\sum^{d-1}_{\substack{m_1,\ldots,m_s=i\\ m_1+\cdots+m_s=(s-1)d+i}}
\prod^s_{j=1}c^{m_j}_d\overline{V}_{m_j}[Y]\\
&=\sum^\infty_{n=1}\frac{1}{n!}
c^d_i\sum^{d-i}_{s=1}\binom{n}{s}\overline{V}_d[Y]^{n-s}
\sum^{d-1}_{\substack{m_1,\ldots,m_s=i\\ m_1+\cdots+m_s=(s-1)d+i}}
\prod^s_{j=1}c^{m_j}_d\overline{V}_{m_j}[Y]
.
\end{align*}
Therefore,
\begin{align*}
\rho(V_0,V_i)=\sum^{d-i}_{s=1}\sum^\infty_{n=s}\frac{1}{n!}
c^d_i\frac{1}{s!(n-s)!}\overline{V}_d[Y]^{n-s}
\sum^{d-1}_{\substack{m_1,\ldots,m_s=i\\ m_1+\cdots+m_s=(s-1)d+i}}
\prod^s_{j=1}c^{m_j}_d\overline{V}_{m_j}[Y],
\end{align*}
and the result \eqref{erho0i} follows.
\end{proof}

When combined with \eqref{rhodd}, \eqref{rhod-1d} and \eqref{rhod-1d-1},
Proposition \ref{prho0j} provides
in the case $d=2$ explicit formulas for the asymptotic
covariances $\sigma(V_0,V_i)$, $i\in\{0,1,2\}$. Together with
Theorem \ref{tcovvolumesurface} this yields all asymptotic
covariances for the intrinsic volumes. For instance
we have:

\begin{theorem}\label{tasymplanar} For a planar stationary and isotropic Boolean
model the asymptotic covariance between Euler characteristic and
surface is given by
\begin{align*}
\sigma(V_0,V_2)&=p(1-p)
-(1-p)^2\Big(\gamma-\frac{\overline{V}_1[Y]^2}{\pi}\Big)\int \big(e^{C_2[Y](x)}-1\big)\,dx\\
&\quad-(1-p)^2\frac{2\,\overline{V}_1[Y]}{\pi}\int e^{C_2[Y](x-y)}\,M_{1,2}[Y](d(x,y)).
\end{align*}
\end{theorem}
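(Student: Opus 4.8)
The plan is to specialize Theorem~\ref{tsigmaij} to $d=2$, $i=0$, $j=2$ and then insert the explicit expressions for the polynomials $P_{0,k}$ and for the inner products $\rho(V_k,V_l)$ that are already available.

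First I would evaluate the polynomials from \eqref{Polyjl}. Using $\kappa_0=1$, $\kappa_1=2$, $\kappa_2=\pi$, and hence $c^0_2=1/(2\pi)$, $c^1_2=1/\pi$, $c^1_0=2$, $c^2_0=2\pi$, $c^2_2=1$, a direct computation gives $P_{2,2}=1$ and, for $k\in\{0,1,2\}$,
\begin{align*}
P_{0,0}(t_0,t_1)=1,\qquad P_{0,1}(t_0,t_1)=-\tfrac{2}{\pi}\,t_1,\qquad P_{0,2}(t_0,t_1)=-t_0+\tfrac1\pi\,t_1^2 .
\end{align*}
Since the arguments appearing in Theorem~\ref{tsigmaij} are $t_0=\overline V_0[Y]=\gamma$ (because $V_0\equiv1$ on $\cK^{(d)}$) and $t_1=\overline V_1[Y]$, and since in the double sum only $l=2$ survives, Theorem~\ref{tsigmaij} becomes
\begin{align*}
\sigma(V_0,V_2)=(1-p)^2\Big[\rho(V_0,V_2)-\tfrac{2\,\overline V_1[Y]}{\pi}\,\rho(V_1,V_2)+\big(\tfrac{\overline V_1[Y]^2}{\pi}-\gamma\big)\rho(V_2,V_2)\Big].
\end{align*}

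Next I would substitute the three known inner products: $\rho(V_2,V_2)=\int\big(e^{C_2[Y](x)}-1\big)\,dx$ by \eqref{rhodd}; $\rho(V_1,V_2)=\int e^{C_2[Y](x-y)}\,M_{1,2}[Y](d(x,y))$ by \eqref{rhod-1d}; and $\rho(V_0,V_2)=e^{\overline V_2[Y]}-1$ by \eqref{erho0d}. (These three formulas hold without an isotropy assumption, but here we are anyway in the isotropic case.) Finally, the constant term is simplified using $1-p=e^{-\overline V_2[Y]}$, which follows from \eqref{evolumefraction} together with $\overline V_2[Y]=\gamma\,\BE V_2(Z_0)$: this gives $(1-p)^2\big(e^{\overline V_2[Y]}-1\big)=(1-p)-(1-p)^2=p(1-p)$. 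Collecting the three contributions yields the asserted identity.

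There is no genuine obstacle here: the statement is a direct corollary of Theorem~\ref{tsigmaij} combined with \eqref{rhodd}, \eqref{rhod-1d} and \eqref{erho0d}. The only place where some care is needed is the evaluation of the $P_{0,k}$ from \eqref{Polyjl}, i.e.\ keeping track of the constants $c^i_j$ and of the constrained index sums, together with the elementary rewriting of the constant term via the volume fraction.
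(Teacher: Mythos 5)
Your proof is correct and follows exactly the route the paper intends: specialize Theorem~\ref{tsigmaij} to $d=2$, $i=0$, $j=2$, compute the polynomials $P_{0,k}$ from \eqref{Polyjl} (your values $P_{0,0}=1$, $P_{0,1}=-\tfrac{2}{\pi}t_1$, $P_{0,2}=-t_0+\tfrac1\pi t_1^2$ and $P_{2,2}=1$ check out), and substitute $\rho(V_0,V_2)$, $\rho(V_1,V_2)$, $\rho(V_2,V_2)$ from \eqref{erho0d}, \eqref{rhod-1d}, \eqref{rhodd}, with the constant simplified via $1-p=e^{-\overline V_2[Y]}$.
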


The formulas for $\sigma(V_0,V_1)$ and $\sigma(V_0,V_0)$
(again in the planar isotropic case) require the assumption
\eqref{eintone} and are more lengthy. In view of \eqref{rhod-1d} and \eqref{rhod-1d-1}
they not only involve the
measure $M_{1,2}[Y]$ and the function $C_2[Y]$, but also the measure
$M_{1,1}[Y]$ and the function $C_1[Y]$.

For the time being it is not clear whether the asymptotic covariances
of a three-dimen\-sio\-nal isotropic Boolean model can be made as explicit
as in the two-dimensional case. More details on this point can be found in
the extended preprint version of \cite{HLS16}.

\subsection{Positivity of asymptotic variances}\label{subposvariance}

It is of some interest to know whether the asymptotic
variance $\sigma(\varphi,\varphi)$ of a geometric
functional is positive. The following result (see
\cite[Exercise 22.4]{LastPenrose17}, where a typo needs to be corrected)
shows that this is true in great generality.

\begin{theorem}\label{tposvariance} Let $\varphi$ be a geometric functional
satisfying
\begin{align}\label{eposvariance}
\iint|\varphi(K\cap(K_1+x_1)\cap\cdots \cap(K_n+x_n))|\,d(x_1,\ldots,x_n)\,\BQ^n(d(K_1,\ldots,K_n))\,\BQ(dK)>0
\end{align}
for some $n\in\N_0$.
Then $\sigma(\varphi,\varphi)>0$.
\end{theorem}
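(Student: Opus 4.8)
The plan is to argue by contraposition. By Theorem~\ref{thm:CovariancesGeneral} we have $\sigma(\varphi,\varphi)=\varrho(\varphi^*,\varphi^*)$, where $\varphi^*$ is again a geometric functional and, by the definition \eqref{relrho} of $\varrho$, this asymptotic variance is a convergent series of \emph{nonnegative} terms,
\[
\sigma(\varphi,\varphi)=\sum_{n\ge 1}\frac{\gamma}{n!}\iint\varphi^*(K_1\cap\cdots\cap K_n)^2\,\Theta^{n-1}(d(K_2,\ldots,K_n))\,\BQ(dK_1).
\]
In particular $\sigma(\varphi,\varphi)\ge 0$, and $\sigma(\varphi,\varphi)=0$ if and only if for every $n\ge 1$ one has $\varphi^*(K_1\cap\cdots\cap K_n)=0$ for $(\BQ\otimes\Theta^{n-1})$-almost every $(K_1,\ldots,K_n)$. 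Assuming this, I would show that $\varphi$ itself vanishes almost everywhere on every finite intersection of typical grains, i.e.\ that the integral in \eqref{eposvariance} equals zero for \emph{every} $n\in\N_0$; this contradicts the hypothesis and yields $\sigma(\varphi,\varphi)>0$.

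The key tool will be an inversion formula recovering $\varphi$ from $\varphi^*$. For a geometric functional $\psi$ put $(\mathcal{A}\psi)(L):=\int\psi(L\cap K)\,\Theta(dK)$; by local boundedness of $\psi$ and \eqref{locallyfinite} this is well defined, and in the stationary setting $\mathcal{A}\psi$ is again a geometric functional, with $(\mathcal{A}^k\psi)(L)=\idotsint\psi(L\cap K_1\cap\cdots\cap K_k)\,\Theta^k(d(K_1,\ldots,K_k))$ for all $k\ge 0$ by Fubini. Applying Theorem~\ref{5-Th1} with window $L\in\cK^d$ gives $\BE\varphi(Z\cap L)=\sum_{k\ge 1}\frac{(-1)^{k-1}}{k!}(\mathcal{A}^k\varphi)(L)$, hence $\varphi^*=-\sum_{k\ge 0}\frac{(-1)^k}{k!}\mathcal{A}^k\varphi$. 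Applying $\sum_{j\ge 0}\frac{1}{j!}\mathcal{A}^j$, re-indexing the double series by $m=j+k$ and using $\sum_{k=0}^m\binom{m}{k}(-1)^k=\I\{m=0\}$, I obtain
\[
\varphi(L)=-\sum_{j\ge 0}\frac{1}{j!}\idotsint\varphi^*(L\cap K_1\cap\cdots\cap K_j)\,\Theta^j(d(K_1,\ldots,K_j)),\qquad L\in\cK^d.
\]
The rearrangement is legitimate because, for fixed $L\in\cK^d$, every set appearing is a convex body contained in $L$, so $|\varphi^*(L\cap K_1\cap\cdots\cap K_j)|$ is bounded by a constant $\|\varphi^*\|_L$ depending only on $L$ (Remark~\ref{r3.2}), and the double series is dominated by $\|\varphi^*\|_L\sum_{j,k}\frac{1}{j!k!}\Theta(\cC_L)^{j+k}=\|\varphi^*\|_L e^{2\Theta(\cC_L)}<\infty$.

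Finally I would insert a typical intersection into this formula. Fix $n\in\N_0$ and let $L=K_0\cap M_1\cap\cdots\cap M_n$ with $K_0$ distributed according to $\BQ$ and $M_1,\ldots,M_n$ distributed as under $\Theta$ (this $L$ is a convex body). Then
\[
\varphi(L)=-\sum_{j\ge 0}\frac{1}{j!}\idotsint\varphi^*\big(K_0\cap M_1\cap\cdots\cap M_n\cap L_1\cap\cdots\cap L_j\big)\,\Theta^j(d(L_1,\ldots,L_j)),
\]
and in the $j$-th summand the argument of $\varphi^*$ is an intersection of the $\BQ$-grain $K_0$ with $n+j$ grains distributed as under $\Theta$. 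By the vanishing of $\varphi^*$ established above (applied with $n+j+1$ in place of $n$) together with Fubini for the $\sigma$-finite measure $\BQ\otimes\Theta^{n+j}$, this $j$-th integral is zero for $(\BQ\otimes\Theta^n)$-almost every $(K_0,M_1,\ldots,M_n)$; taking the countable union over $j\ge 0$ of the exceptional null sets shows $\varphi(L)=0$ for $(\BQ\otimes\Theta^n)$-almost every $(K_0,M_1,\ldots,M_n)$. Unfolding $\Theta$ via \eqref{Thetastat}, this is exactly the statement that the integral in \eqref{eposvariance} vanishes for this $n$, and since $n$ was arbitrary this contradicts the assumption. The one place where care is needed is the inversion step, namely verifying that $\mathcal{A}$ maps geometric functionals to geometric functionals with $\mathcal{A}^k$ given by the iterated integral and that the double series may be rearranged; once this is in place, the remainder is bookkeeping with Fubini and countable unions of null sets.
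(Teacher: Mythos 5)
Your argument is correct but takes a genuinely different route from the paper. The paper also shows, from $\sigma(\varphi,\varphi)=\varrho(\varphi^*,\varphi^*)=0$, that $\varphi^*$ vanishes a.e.\ on all finite typical intersections, but it then converts this to a statement about $\varphi$ by a \emph{probabilistic} argument: from the Fock space identity \eqref{Fockadditive} one reads off $\BV(\varphi(Z\cap K))=0$ for $\BQ$-a.e.\ $K$ (and likewise after intersecting with finitely many further translates), hence $\varphi(Z\cap K)=\BE\varphi(Z\cap K)=\varphi(K)$ $\BP$-a.s.; since by \eqref{capacitygenstat} the event $\{Z\cap K=\emptyset\}$ has positive probability, $\varphi(K)=\varphi(\emptyset)=0$, contradicting \eqref{eposvariance}. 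You instead extract $\varphi$ from $\varphi^*$ by a purely algebraic inversion $\varphi=-\sum_{j\ge0}\frac{1}{j!}\mathcal{A}^j\varphi^*$, obtained from $\varphi^*=-\sum_{k\ge0}\frac{(-1)^k}{k!}\mathcal{A}^k\varphi$ and the Vandermonde identity $\sum_{k=0}^m\binom{m}{k}(-1)^k=\I\{m=0\}$. Both routes are valid; yours is more systematic and avoids invoking the capacity functional, at the cost of having to verify that $\mathcal{A}$ preserves geometric functionals and justify the double-series rearrangement (which you do, correctly, via local boundedness and the local finiteness \eqref{locallyfinite} of $\Theta$). One small slip: in the domination bound justifying the re-indexing the relevant constant is $\|\varphi\|_L$ (a bound on $|\varphi|$ over convex bodies in $L$), not $\|\varphi^*\|_L$, since the double sum has terms $\frac{1}{j!}\frac{(-1)^k}{k!}(\mathcal{A}^{j+k}\varphi)(L)$; the conclusion is unchanged because both are finite by Remark \ref{r3.2}.
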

\begin{proof}
We proceed by contradiction and assume that $\sigma(\varphi,\varphi)=0$.

First, we assume that \eqref{eposvariance} holds for $n=0$, that
is,
\begin{align}\label{eposvariance1}
\int |\varphi(K)|\,\BQ(dK)>0.
\end{align}
By Theorem \ref{thm:CovariancesGeneral} we then obtain, for all $m\in\N$,
$(\lambda_d\otimes\BQ)^m$-a.e.\ $((y_1,K_1),\ldots,(y_m,K_m))$
and for $\BQ$-a.e.\ $K$, that $\varphi^*(K)=0$ and
\begin{align}\label{e22.36}
\varphi^*(K\cap (K_1+y_1)\cap \cdots\cap (K_m+y_m))=0.
\end{align}
Hence we obtain from \eqref{Fockadditive} that $\BV(\varphi(Z\cap K))=0$
for $\BQ$-a.e.\ $K$, that is
\begin{align*}
\varphi(Z\cap K)=\BE \varphi(Z\cap K),\quad \BP\text{-a.s.},\, \BQ\text{-a.e.\ $K$}.
\end{align*}
Since $\varphi^*(K)=0$ for $\BQ$-a.e.\ $K$, we have $\BE\varphi(Z\cap K)=\varphi(K)$ for $\BQ$-a.e.\ $K$, and thus
$$
\varphi(Z\cap K)=\varphi(K),\quad \BP\text{-a.s},\,\BQ\text{-a.e.\ $K$}.
$$
Moreover, by \eqref{capacitygenstat} and our basic integrability assumption
\eqref{eqn:AssumptionQ},
$\BP(Z\cap K=\emptyset)>0$ for each $K\in\cK^{(d)}$.
Therefore $\varphi(K)=\varphi(\emptyset)=0$ for $\BQ$-a.e.\ $K$,
contradicting \eqref{eposvariance1}.

If \eqref{eposvariance} holds for $n=1$, we proceed in a similar way. Observe that it follows from \eqref{e22.36}
that $\BV(\varphi(Z\cap K\cap (K_1+y_1)))=0$ for $\BQ$-a.e.\ $K$ and $\lambda_d\otimes\BQ$-a.e.\ $(y_1,K_1)$, hence
\begin{align*}
\varphi(Z\cap K\cap(K_1+y_1))=\BE \varphi(Z\cap K\cap(K_1+y_1))
\end{align*}
holds $\BP$-a.s., for $\BQ$-a.e.\ $K$ and for $\lambda_d\otimes\BQ$-a.e.\  $(y_1,K_1)$. Since $\varphi^*(K\cap (K_1+y_1))=0$
for $\BQ$-a.e.\ $K$ and $\lambda_d\otimes\BQ$-a.e.\ $(y_1,K_1)$, we conclude that
$$
\varphi(Z\cap K\cap (K_1+y_1))=\varphi(K\cap (K_1+y_1))
$$
holds $\BP$-a.s., for $\BQ$-a.e.\ $K$ and for $\lambda_d\otimes\BQ$-a.e.\  $(y_1,K_1)$.
Since $\BP(Z\cap K\cap (K_1+y_1)=\emptyset)>0$ whenever $K\cap(K_1+y_1)\neq\emptyset$, we arrive at a contradiction as before.

The argument for arbitrary $n\in\N$ is the same, but the notation is more involved.
\end{proof}

 In the case where $\varphi=V_i$, the preceding Theorem \ref{tposvariance}  is equivalent to the following corollary, since the intrinsic volumes are monotone and nonnegative.

\begin{corollary}\label{cposvariance} Let $i\in\{0,\ldots,d\}$ be such
that $\int V_i(K)\, \BQ(dK)>0$. Then $\sigma(V_i,V_i)>0$.
\end{corollary}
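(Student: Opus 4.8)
The plan is to obtain this corollary as an immediate consequence of Theorem \ref{tposvariance} applied to the geometric functional $\varphi = V_i$. First I would verify that $V_i$ really qualifies as a geometric functional in the sense required there: by Remark \ref{rGroemer} it extends additively to $\mathcal{R}^d$, it is translation invariant, and since it is nonnegative and monotone with respect to set inclusion it is locally bounded in the sense of \eqref{emar65}. Hence Theorem \ref{tposvariance} is applicable as soon as we exhibit some $n \in \N_0$ for which its hypothesis \eqref{eposvariance} holds.

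The natural choice is $n = 0$. As recorded in the proof of Theorem \ref{tposvariance} (see \eqref{eposvariance1}), for $n = 0$ condition \eqref{eposvariance} reads $\int |\varphi(K)|\,\BQ(dK) > 0$; for $\varphi = V_i$ this is $\int V_i(K)\,\BQ(dK) > 0$ because $V_i \ge 0$, which is precisely the hypothesis of the corollary. Theorem \ref{tposvariance} then yields $\sigma(V_i,V_i) > 0$, as claimed.

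To justify the equivalence remarked upon just before the corollary, I would also check the converse implication: if \eqref{eposvariance} holds for $\varphi = V_i$ for some $n \ge 1$, then on a set of positive measure one has $V_i(K \cap (K_1+x_1) \cap \cdots \cap (K_n+x_n)) > 0$, and since the intersection is contained in $K$, monotonicity of $V_i$ forces $V_i(K) > 0$ on a set of positive $\BQ$-measure, i.e. $\int V_i(K)\,\BQ(dK) > 0$. Thus for intrinsic volumes every instance of \eqref{eposvariance} collapses to the single condition appearing in the corollary. There is no real obstacle in this argument; the only points that need a moment of care are confirming that $V_i$ is a geometric functional and observing that the $n = 0$ instance of \eqref{eposvariance} is exactly the condition \eqref{eposvariance1} used at the start of the proof of Theorem \ref{tposvariance}.
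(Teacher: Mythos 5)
Your argument is correct and matches what the paper intends: you apply Theorem \ref{tposvariance} with $n=0$, noting that since $V_i\ge 0$ the $n=0$ instance of \eqref{eposvariance} is exactly $\int V_i(K)\,\BQ(dK)>0$, and you correctly identify $V_i$ as a geometric functional. Your verification of the converse (any instance of \eqref{eposvariance} for $\varphi=V_i$ collapses to the $n=0$ condition by monotonicity) is precisely the content of the paper's remark that the theorem and corollary are ``equivalent'' for $\varphi=V_i$ because the intrinsic volumes are monotone and nonnegative.
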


The next result is a special case of Theorem 4.1 in \cite{HLS16},
dealing with more general geometric functionals.
Thanks to Theorem \ref{tposvariance} we can give here
a much shorter proof.

\begin{theorem}\label{tposdefinite} Assume that $\BP(Z^\circ_0\ne\emptyset)>0$.
Then the matrix $(\sigma(V_i,V_j))_{i,j=0,\ldots,d}$
is positive definite.
\end{theorem}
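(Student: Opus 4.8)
The strategy is to show that the quadratic form $\sum_{i,j} a_i a_j \sigma(V_i,V_j)$ is strictly positive for every nonzero vector $a = (a_0,\ldots,a_d) \in \R^{d+1}$. By bilinearity of $\sigma$ (which follows from Theorem~\ref{thm:CovariancesGeneral} and bilinearity of $\varrho$ and of $\varphi \mapsto \varphi^*$), this form equals $\sigma(\varphi,\varphi)$ where $\varphi := \sum_{i=0}^d a_i V_i$. Since $\varphi$ is a linear combination of intrinsic volumes it is a geometric functional, so Theorem~\ref{tposvariance} applies: it suffices to verify that the positivity condition \eqref{eposvariance} holds for $\varphi$ for some $n \in \N_0$. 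Thus the whole problem reduces to checking a single non-degeneracy condition, and the assumption $\BP(Z^\circ_0 \ne \emptyset) > 0$ must be exactly what is needed to make it work when $a \ne 0$.

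First I would take $n$ large enough — in fact $n = d$ should suffice — and consider the inner integral $\varphi(K \cap K_1^{x_1} \cap \cdots \cap K_d^{x_d})$. The idea is to pick the translation vectors $x_1,\ldots,x_d$ so that on a set of positive measure in $(x_1,\ldots,x_d)$ the intersection $K \cap K_1^{x_1} \cap \cdots \cap K_d^{x_d}$ is a full-dimensional convex body whose shape can be tuned. Because $\BP(Z^\circ_0 \ne \emptyset) > 0$, on a $\BQ$-positive set of grains $K, K_1, \ldots, K_d$ each has nonempty interior, so small translations produce nonempty-interior intersections, and moreover by translating the $K_i$ one can arrange that the intersection contains, say, a small ball of a prescribed (variable) radius centered at a prescribed point. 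This gives enough freedom to make the intersection body range over a family of convex bodies — for instance scaled copies $cB^d$ of a ball, with $c$ ranging over an interval — inside the domain of integration.

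Now the key point: $\varphi(cB^d) = \sum_{i=0}^d a_i V_i(cB^d) = \sum_{i=0}^d a_i \binom{d}{i}\kappa_d/\kappa_{d-i} \, c^i$ is a polynomial in $c$ of degree $\le d$, and it is the zero polynomial only if all $a_i = 0$, since the intrinsic volumes have distinct degrees of homogeneity and are therefore linearly independent. Hence if $a \ne 0$ this polynomial is nonzero, so $\varphi(cB^d) \ne 0$ for all but finitely many $c$ in our interval; integrating $|\varphi|$ over the positive-measure set of configurations producing such intersection bodies yields a strictly positive value, which is precisely \eqref{eposvariance}. Theorem~\ref{tposvariance} then gives $\sigma(\varphi,\varphi) > 0$, i.e. $a^\top (\sigma(V_i,V_j))_{i,j} a > 0$, establishing positive definiteness.

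The main obstacle is the geometric construction in the middle step: making rigorous the claim that, on a set of positive $(\lambda_d^{\otimes n} \otimes \BQ^{\otimes(n+1)})$-measure, the random intersection $K \cap K_1^{x_1} \cap \cdots \cap K_n^{x_n}$ covers a prescribed family of test bodies (e.g. balls of varying radius), using only $\BP(Z^\circ_0 \ne \emptyset) > 0$. One careful way is: restrict to the positive-$\BQ$-mass event that $0 \in K_i^\circ$ for all $i$ (after a common translation, which can be absorbed into the $x_i$), pick a radius $\delta > 0$ with $\BQ(\{\delta B^d \subset K\}) > 0$, then note that for $x_i$ ranging over a neighborhood of $0$ the intersection contains a ball of radius bounded below, and shrinking this neighborhood one controls the intersection tightly enough near a single point to realize $cB^d$ for a range of $c$; alternatively, one invokes the one-dimensional slicing / continuity argument already used implicitly in Theorem~\ref{tposvariance}. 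I would not belabor the routine measure-theoretic bookkeeping, but would make sure the linear-independence-of-intrinsic-volumes step — which is where $a \ne 0$ enters decisively — is stated cleanly, since that is the conceptual heart of why the matrix is positive *definite* and not merely positive semidefinite.
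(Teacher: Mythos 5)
Your overall reduction is the same as the paper's: by bilinearity, positive definiteness reduces to showing $\sigma(\varphi,\varphi)>0$ for $\varphi=\sum_i a_i V_i$ with $a\ne 0$, and by Theorem~\ref{tposvariance} this in turn reduces to checking the integral condition \eqref{eposvariance} for some $n$. That framing is correct and is exactly what the paper does.

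The gap is in the verification of \eqref{eposvariance}. You claim that by translating the grains one can make the intersection $K\cap K_1^{x_1}\cap\cdots\cap K_n^{x_n}$ ``range over a family of convex bodies --- for instance scaled copies $cB^d$ of a ball.'' That is not true: the intersection of fixed convex bodies at varying translations is \emph{never} a Euclidean ball unless the grains themselves are balls; containing a small ball (which is what nonempty interior gives you) is very different from \emph{being} a scaled ball. So the key step --- evaluating $\varphi$ on the one-parameter family $\{cB^d\}$ and invoking linear independence of the intrinsic volumes via the polynomial $\sum_i a_i \binom{d}{i}\kappa_d/\kappa_{d-i}\,c^i$ --- has no object to apply to. The shapes of the intersections vary with $(x_1,\ldots,x_n)$, and nothing in your argument controls how $\varphi$ behaves across that varying family.

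The paper's proof (for $n=d+1$) handles precisely this difficulty: after splitting the index set into $I^+$ and $I^-$ (by Corollary~\ref{cposvariance} one may assume both are nonempty and WLOG $\min I^+>\min I^-$), it uses monotonicity, translation invariance and $i$-homogeneity of $V_i$ to bound $\sum_{i\in I^+}a_iV_i(L)$ above by $\sum_{i\in I^+}a_iR(L)^iV_i(B^d)$ and $\sum_{i\in I^-}(-a_i)V_i(L)$ below by $\sum_{i\in I^-}(-a_i)r(L)^iV_i(B^d)$, where $R(L)$, $r(L)$ are the circumradius and inradius of the intersection $L$. On a positive-measure set of configurations with $R(L)/r(L)$ bounded (which is supplied by \cite[Lemma 4.2]{HLS16}), letting $R(L)$ shrink makes the lowest-degree negative term dominate, so $\varphi(L)=0$ a.e.\ is impossible. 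To repair your argument you would need to replace the ``intersection $=cB^d$'' claim by this kind of bounded-aspect-ratio / scaling estimate, or invoke the relevant lemma from \cite{HLS16} explicitly, since the positive-measure set of well-shaped small intersections is exactly the nontrivial geometric input you gesture at but do not supply.
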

\begin{proof} Let $a_0,\ldots,a_d\in\R$
satisfy $\sum^d_{i=0}|a_i|>0$.
We need to show that
\begin{align*}
\sum^{d}_{i,j=0}a_ia_j\sigma(V_i,V_j)>0,
\end{align*}
that is $\sigma(\varphi,\varphi)>0$, where
$
\varphi:=\sum^d_{i=0}a_iV_i.
$
By Corollary \ref{cposvariance} we can assume that
$I^+\ne \emptyset$ and $I^-\ne \emptyset$, where
$I^+:=\{i\in\{0,\ldots,n\}:a_i>0\}$ and $I^-$ is defined similarly.
We check condition \eqref{eposvariance} for $n=d+1$.
Assume it fails. Then there exist $K_1,\ldots,K_{d+1}\in\cK^d$
with nonempty interior such that
\begin{align*}
\varphi(L(x_2,\ldots,x_{d+1}))=0,\quad \lambda_d^d\text{-a.e.\ $(x_2,\ldots,x_{d+1})$},
\end{align*}
where $L(x_2,\ldots,x_{d+1}):=K_1\cap(K_2+x_2)\cap\cdots \cap(K_{d+1}+x_{d+1})$.
Assume without loss of generality that $\min I^+>\min I^-$.
Take $c_0>0$ to be specified later. Let
$R(K)$ (resp.\ $r(K)$) denote the circumradius (resp.\ inradius)
of $K\in\cK^{(d)}$. By the monotonicity,
translation invariance and homogeneity of intrinsic volumes
we obtain for $\lambda_d^d$-a.e.\ $(x_2,\ldots,x_{d+1})$
with $R(L)\le c_0r(L)$
(abbreviating $L:=L(x_2,\ldots,x_{d+1})$) that
\begin{align*}
\sum_{i\in I^+}a_iR(L)^iV_i(B^d)
&\ge \sum_{i\in I^+}a_iV_i(L)=\sum_{i\in I^-}(-a_i)V_i(L)\\
&\ge \sum_{i\in I^-}(-a_i)r(L)^iV_i(B^d)
\ge \sum_{i\in I^-}(-a_i)c_0^{-1}R(L)^iV_i(B^d).
\end{align*}
Since $\min I^+>\min I^-$ the above inequality fails, whenever
$R(L)\le r$ for a sufficiently small $r$.
However, choosing $c_0$ as in \cite[Lemma 4.2]{HLS16},
the inequality must hold on a set of points
$(x_2,\ldots,x_{d+1})$ with positive $\lambda^d_d$-measure.
Hence \eqref{eposvariance} cannot fail for $n=d+1$, so that
Theorem \ref{tposvariance} implies $\sigma(\varphi,\varphi)>0$.
\end{proof}

\section{Central limit theorems}\label{SecCLT}

Again we adapt the setting of Section \ref{chap5-sec:3}
and assume that \eqref{asecondmoment} is satisfied.
Throughout this section we fix some $W\in\cK^d$ with $V_d(W)>0$.
Let $\varphi$ be a geometric functional and
define
\begin{align}\label{sigmar}
\sigma_r(\varphi):=\BV(\varphi(Z\cap rW))^{1/2},\quad r>0.
\end{align}
Whenever $\sigma_r(\varphi)>0$ we define
\begin{align}\label{hatvarphir}
\hat\varphi_r:=\sigma_r(\varphi)^{-1}
(\varphi(Z\cap rW)-\BE \varphi(Z\cap rW))
\end{align}
and note that $\hat\varphi_r$ is a random variable (depending on $Z$)
with mean zero and variance one. In this section we study the
{\em asymptotic normality} of $\hat\varphi_r$ as $r\to\infty$.

\subsection{Stein's method}\label{secSteinmethod}

Recall from the textbooks (from \cite{Kallenberg} for instance)
that a sequence $(X_n)$ of real-valued random variable
is said to {\em converge in distribution}
to a random variable $X$ if $\lim_{n\to\infty}\BE f(X_n)=\BE f(X)$ for every
bounded continuous function $f\colon \R\rightarrow\R$.
One writes $X_n\overset{d}{\to} X$ as $n\to\infty$.
(This definition extends to random vectors in the obvious way.)
A possible way to quantify this convergence is
the {\em Wasserstein distance} between random variables
$X_0,X$. This distance is defined by
\begin{align}\label{ewasser}
d_1(X_0,X)=\sup_{h\in {\operatorname{\mathbf{Lip}}}(1)} |\BE[h(X_0)]-\BE[h(X)]|,
\end{align}
where ${\operatorname{\mathbf{Lip}}}(1)$ denotes the space of all Lipschitz functions
$h\colon\R\to\R$ with a Lipschitz constant less than or equal to one.
If a sequence $(X_n)$ of random variables satisfies
$\lim_{n\to\infty}d_1(X_n,X)=0$, then it is not hard to see
that $X_n$ converges to $X$ in distribution. Here we are
interested in the {\em central limit theorem},
that is in the case where $X$ has a standard normal distribution.

Let $\mathbf{AC}_{1,2}$ be the
set of all differentiable functions $g\colon\R\rightarrow \R$
such that the derivative $g'$ is absolutely continuous
and satisfies $\sup\{|g'(x)|:x\in\R\}\le 1$ and $\sup\{|g''(x)|:x\in\R\}\le 2$,
for some version $g''$ of the Radon--Nikod\'ym derivative of $g'$.
Throughout we let $N$ denote a standard normal random variable.
Let $X$ be an integrable random variable.
Stein \cite{Stein72} discovered that
\begin{align}\label{Steinbound}
d_1(X,N)\le\sup_{g\in \mathbf{AC}_{1,2}}|\BE[g'(X)-Xg(X)]|.
\end{align}

To convert \eqref{Steinbound} into a bound for functions
$X$ of our Poisson process $Y$ we need to introduce some notation.
Let $X=f(Y)$ be a measurable function of $Y$. Then we define
$$
D_KX:=f(Y\cup\{K\})-f(Y),\quad K\in\cK^{d},
$$
while $DX$ denotes the mapping (the {\em difference operator}) $K\mapsto D_KF$ from
$K\in\cK^{d}$ to the space of random varables.
Of course there exist other measurable function $\tilde f$ such that
$X=\tilde f(Y)$ $\BP$-a.s. However, it is  not hard to see
that the definition of $D_KX$ is $\BP$-almost surely and for
$\Theta$-almost every $K$ independent of the specific choice of
the {\em representative} $f$.

We can write $Y=\{Z_n:n\in\N\}$, where the $Z_n$ are random convex
bodies, depending measurably on $Y$.
Let $B_1,B_2,\ldots$ be a sequence of independent
Bernoulli random variables with success probability $t\in[0,1]$,
independent of $Y$. Define
$\eta_t:=\{Z_n:B_n=1\}$ as a {\it $t$-thinning} of $Y$.
By \cite[Corollary 5.9]{LastPenrose17}, $Y_t$ and $Y-Y_t$ are independent Poisson processes
with intensity measures $t\Theta$ and $(1-t)\Theta$, respectively.
Given $X=f(Y)$ and $t\in[0,1]$ we define
a random variable $P_t$ via the conditional expectation
\begin{align}\label{e20.3}
P_t X=\BE[f(Y_t+Y'_{1-t})\mid Y],
\end{align}
where $Y'_{1-t}$ is a Poisson process with intensity measure
$(1-t)\Theta$, independent of the pair $(Y,Y_t)$.

Now we are in the position to state a seminal result from \cite{PSTU10}.
We take the version from \cite[Chapter 21]{LastPenrose17}, specialized to
Poisson particle processes.

\begin{theorem}\label{tSteinPoisson} Let
$X$ be a square integrable function of the Poisson process $Y$ such that
$\BE \int (D_KX)^2\,\Theta(dK)<\infty$ and $\BE X=0$. Then
\begin{align}\label{e17.4}\notag
d_1(X,N)&\le \BE\Big|1-\iint^1_0 (P_tD_KX)(D_KX)\,dt\,\Theta(dK)\Big|\\
&\quad +\BE\iint^1_0 |P_tD_KX|(D_KX)^2\,dt\,\Theta(dK).
\end{align}
\end{theorem}

The proof of this theorem is based on the Stein bound \eqref{Steinbound}
and a covariance identity for Poisson functionals.
The original proof in \cite{PSTU10} uses Malliavin calculus.

\subsection{Quantitative results}\label{subquantclt}

In this section we strengthen the assumption \eqref{asecondmoment}
and assume that
\begin{align}\label{ethirddmoment}
\int V_i(K)^3\,\BQ(dK) <\infty, \quad i=0,\ldots,d.
\end{align}

Let $B^d$ denote the unit ball in $\R^d$, and
define a measurable function $\bar{V}\colon\mathcal{K}^d\rightarrow\R$
by
\begin{align}\label{ebarV}
\overbar{V}(K):=V_d(K\oplus B^d),\quad K\in\mathcal{K}^d,
\end{align}
where $A\oplus B:=\{x+y:x\in A,y\in B\}$ is the {\em Minkowski addition}
of two sets $A,B\subset\R^d$.
The function $\bar{V}$ is translation invariant and additive.

Essentially, the following result is Theorem 9.3 in \cite{HLS16}.
Recall the notation \eqref{sigmar} and \eqref{hatvarphir}.

\begin{theorem}\label{tcltquant} Suppose that $\varphi$ is a geometric functional and
that \eqref{ethirddmoment} holds. Then there exist constants $c_1,c_2>0$ such
that
\begin{align}\label{ecltquant}
d_1(\hat{\varphi}_r,N)\le c_1\sigma^{-2}_r(\varphi)\overbar{V}(W)^{1/2}
+c_2\sigma^{-3}_r(\varphi)\overbar{V}(W),
\end{align}
whenever $\sigma_r(\varphi)>0$.
\end{theorem}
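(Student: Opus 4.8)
The plan is to apply the general Stein bound from Theorem~\ref{tSteinPoisson} to the normalized functional $\hat\varphi_r$, and then estimate each of the two terms on the right-hand side of \eqref{e17.4} by geometric arguments. The first task is to verify that $X:=\varphi(Z\cap rW)-\BE\varphi(Z\cap rW)$ satisfies the hypotheses of Theorem~\ref{tSteinPoisson}: the moment condition $\BE X^2<\infty$ follows from \eqref{esquare}, and the bound $\BE\int(D_KX)^2\,\Theta(dK)<\infty$ will follow from the key observation that the difference operator is \emph{geometrically localized}. Indeed, $D_K\varphi(Z\cap rW)=\varphi((Z\cup K)\cap rW)-\varphi(Z\cap rW)$ vanishes unless $K\cap rW\neq\emptyset$, and by additivity and local boundedness of $\varphi$ one can bound $|D_K\varphi(Z\cap rW)|$ (and also $|D_{K_1}D_{K_2}\varphi(Z\cap rW)|$, which enters the bound on the $P_t$-terms) by a quantity controlled by the number of grains of $Y$ meeting $K$ together with the intrinsic volumes of $K$; here the third moment assumption \eqref{ethirddmoment} is what makes the relevant expectations finite. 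The rescaling $\hat\varphi_r=\sigma_r(\varphi)^{-1}X$ then pulls out the factors $\sigma_r^{-2}(\varphi)$ and $\sigma_r^{-3}(\varphi)$ appearing in \eqref{ecltquant}, since the first term of \eqref{e17.4} is quadratic and the second is cubic in the difference operator.

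Next I would estimate the two error terms. For the first term, $\BE\bigl|1-\iint_0^1(P_tD_KX)(D_KX)\,dt\,\Theta(dK)\bigr|$, the strategy is to compare the integral with the variance $\sigma_r^2(\varphi)$: using the covariance identity underlying Theorem~\ref{tSteinPoisson} one has $\BE\iint_0^1(P_tD_KX)(D_KX)\,dt\,\Theta(dK)=\BV(X)$, so after dividing by $\sigma_r^2(\varphi)$ the ``$1$'' is exactly the mean of the random integral, and the first term is the mean absolute deviation of that integral from its mean. This is bounded by its standard deviation, which in turn is estimated by second-order difference operators and Mecke-type computations; the localization of $D_KX$ confines all spatial integrations to a neighbourhood of $rW$, producing a factor proportional to $V_d(rW)$, hence after the normalization a factor of order $\overbar V(W)^{1/2}$ times $r^{d/2}$—but since $\sigma_r^2(\varphi)$ is itself of order $r^d$ by Theorem~\ref{thm:CovariancesGeneral}, the stated form $c_1\sigma_r^{-2}(\varphi)\overbar V(W)^{1/2}$ emerges once one tracks that $\overbar V(rW)=r^d\overbar V(W)$ bounds the relevant volume of the dilated window. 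For the second (cubic) term one argues more crudely: bound $|P_tD_KX|\le\BE[|D_KX|\mid\cdot]$ in an $L^2$-sense, apply Cauchy–Schwarz and the Mecke equation, and use the localization plus \eqref{ethirddmoment} to obtain a bound of order $\overbar V(rW)=r^d\overbar V(W)$, which after the $\sigma_r^{-3}(\varphi)$ normalization gives $c_2\sigma_r^{-3}(\varphi)\overbar V(W)$.

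The main obstacle, as I see it, is the careful bookkeeping in the first term: one must show that the random variable $\iint_0^1(P_tD_KX)(D_KX)\,dt\,\Theta(dK)$ concentrates around its mean with fluctuations of order $r^{d/2}$ (rather than $r^d$), which requires a genuine second-order analysis—introducing iterated difference operators $D_{K_1}D_{K_2}X$, establishing that these are again localized (supported on configurations where $K_1,K_2$ and the window are ``close''), and invoking the Poincaré-type inequality for Poisson functionals to bound the variance of the integral by an expression that is linear, not quadratic, in $V_d(rW)$. The function $\overbar V$ defined in \eqref{ebarV} enters precisely because the range of interaction of a grain is governed by its outer parallel body, so the effective window in all these estimates is $rW\oplus B^d$ (or a fixed dilate thereof), whose volume is comparable to $\overbar V(rW)$. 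Once this concentration estimate is in place, substituting $X=\sigma_r(\varphi)\hat\varphi_r$ and collecting the powers of $\sigma_r(\varphi)$ yields \eqref{ecltquant} with constants $c_1,c_2$ depending only on $d$, $\gamma$, $\BQ$ and $W$ through the third moments in \eqref{ethirddmoment}; the detailed constants are those worked out in \cite{HLS16}.
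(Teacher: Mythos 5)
The paper does not actually prove Theorem~\ref{tcltquant}: it declares the proof ``beyond the scope of this chapter'' and points to two external routes, namely the original chaos-expansion argument in \cite{HLS16} and the second-order Poincar\'e inequality route in \cite[Chapter~22]{LastPenrose17} (itself built on Theorem~\ref{tSteinPoisson} plus the Poincar\'e inequality). Your sketch follows the second of these routes rather than the chaos expansion: you start from the Stein bound of Theorem~\ref{tSteinPoisson}, note that the cubic term is handled by Mecke/Cauchy--Schwarz together with \eqref{ethirddmoment}, and recognize that the main work is showing that $\iint_0^1(P_tD_KX)(D_KX)\,dt\,\Theta(dK)$ concentrates around its mean $\BV(X)$ at rate $r^{d/2}$ rather than $r^d$, which requires iterated difference operators $D_{K_1}D_{K_2}X$, their geometric localization, and a Poincar\'e-type inequality --- precisely the content of the second-order Poincar\'e inequality of \cite{LaPeSchu16}. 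You also correctly identify why $\overbar V$ enters (the interaction range is governed by $W\oplus B^d$). So the roadmap is faithful to one of the two references.

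Two small caveats. First, your identity $\overbar V(rW)=r^d\overbar V(W)$ is not correct, since the outer unit ball $B^d$ in \eqref{ebarV} is not dilated; by the Steiner formula $\overbar V(rW)=\sum_{i=0}^d\kappa_{d-i}r^iV_i(W)$, which is strictly smaller than $r^d\overbar V(W)$ for $r>1$, so this should be an inequality (or an asymptotic comparability), not an equality. Second, your own scaling bookkeeping actually suggests the bound should carry a factor $\overbar V(rW)^{1/2}$ rather than $\overbar V(W)^{1/2}$ (and likewise $\overbar V(rW)$ in the cubic term), which, combined with $\sigma_r^2\sim r^d$, gives the correct $r^{-d/2}$ order; this is indeed the form in the sources the paper cites, and you should not try to force the calculation to match the literal $\overbar V(W)$ in the theorem statement, which appears to be a typographical slip for $\overbar V(rW)$. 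With those corrections your outline is a fair account of the second-order Poincar\'e proof, though of course, like the paper, it stops short of the detailed estimates and defers them to \cite{HLS16} and \cite[Chapter~22]{LastPenrose17}.
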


The proof of Theorem \ref{tcltquant} is beyond the scope of this chapter.
The original argument in \cite{HLS16} rests on Theorem \ref{tSteinPoisson}
and a tedious analysis of the {\em chaos expansion}
of $\varphi(Z\cap rW)$. Our formulation is taken from
\cite[Chapter 22]{LastPenrose17}, where the proof is based
on the {\em second order Poincar\'e inequality}
from \cite{LaPeSchu16}. The latter result involves
second order difference operators and is derived from
Theorem \ref{tSteinPoisson} and the {\em Poincar\'e inequality}.

If the asymptotic variance $\sigma(\varphi,\varphi)$ is positive,
Theorem \ref{tcltquant} has the following immediate consequence.

\begin{theorem}\label{tcltquantitative} Suppose that the assumptions of Theorem \ref{tcltquant}
hold and, in addition, that $\sigma(\varphi,\varphi)>0$.
Then there exist $\bar{c}>0$ and $r_0>0$ such that
\begin{align*}
d_1(\hat\varphi_r,N)\le \bar{c}r^{-1/2},\quad r\ge r_0.
\end{align*}
\end{theorem}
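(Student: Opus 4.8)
The plan is to read the statement off directly from the quantitative bound of Theorem~\ref{tcltquant} combined with the asymptotic covariance formula of Theorem~\ref{thm:CovariancesGeneral}; the entire content of the assertion is already contained in those two results, and what remains is a comparison of powers of $r$.

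First I would record the exact order of growth of the variance. Since $\varphi$ is a geometric functional and \eqref{asecondmoment} holds (it is implied by the standing assumption \eqref{ethirddmoment}), Theorem~\ref{thm:CovariancesGeneral} applies and yields
\[
\lim_{r\to\infty}\frac{\sigma_r(\varphi)^2}{V_d(rW)}=\sigma(\varphi,\varphi),
\]
where by hypothesis $\sigma(\varphi,\varphi)>0$. Because $V_d(rW)=r^dV_d(W)$, this provides a constant $c_3>0$ and a radius $r_1>0$ such that $\sigma_r(\varphi)^2\ge c_3\,r^d$ for all $r\ge r_1$ (one may take $c_3=\tfrac12\sigma(\varphi,\varphi)V_d(W)$). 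In particular $\sigma_r(\varphi)>0$ for such $r$, so that $\hat\varphi_r$ in \eqref{hatvarphir} is well defined and Theorem~\ref{tcltquant} is applicable.

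Then I would simply insert this into the bound of Theorem~\ref{tcltquant}. With the constants $c_1,c_2>0$ furnished there,
\[
d_1(\hat\varphi_r,N)\le c_1\,\sigma_r(\varphi)^{-2}\,\overbar{V}(W)^{1/2}+c_2\,\sigma_r(\varphi)^{-3}\,\overbar{V}(W),
\]
where $\overbar{V}(W)=V_d(W\oplus B^d)$ is a fixed finite constant; if one prefers to keep the window in the estimate and reads $\overbar{V}(rW)$ instead, the Steiner formula \eqref{steiner} gives $\overbar{V}(rW)=\sum_{i=0}^d\kappa_{d-i}r^iV_i(W)\le c_4\,r^d$ for $r\ge 1$, and the computation below is unchanged. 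For $r\ge r_0:=\max\{r_1,1\}$ I would use $\sigma_r(\varphi)^{-2}\le c_3^{-1}r^{-d}$ and $\sigma_r(\varphi)^{-3}\le c_3^{-3/2}r^{-3d/2}$ to get
\[
d_1(\hat\varphi_r,N)\le c_1 c_3^{-1}\overbar{V}(W)^{1/2}\,r^{-d}+c_2 c_3^{-3/2}\overbar{V}(W)\,r^{-3d/2}.
\]
Since $d\ge 1$ and $r\ge 1$ one has $r^{-d}\le r^{-1/2}$ and $r^{-3d/2}\le r^{-1/2}$, so setting $\bar c:=c_1 c_3^{-1}\overbar{V}(W)^{1/2}+c_2 c_3^{-3/2}\overbar{V}(W)$ gives $d_1(\hat\varphi_r,N)\le\bar c\,r^{-1/2}$ for all $r\ge r_0$, which is the claim.

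I do not expect any real obstacle here: the only genuine ingredient beyond the two quoted theorems is the identification of the precise order $r^d$ of $\sigma_r(\varphi)^2$, and this is exactly where the positivity hypothesis $\sigma(\varphi,\varphi)>0$ enters — without it the denominators $\sigma_r(\varphi)$ need not grow and the right‑hand side of the Stein bound need not decay. The substance of the central limit theorem itself lies entirely in the quantitative estimate of Theorem~\ref{tcltquant}, whose proof is not reproduced in this chapter.
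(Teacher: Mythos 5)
Your proof is correct and is exactly the immediate consequence the paper alludes to: positivity of $\sigma(\varphi,\varphi)$ combined with Theorem~\ref{thm:CovariancesGeneral} gives $\sigma_r(\varphi)^2\gtrsim r^d$, which is then inserted into the quantitative bound of Theorem~\ref{tcltquant}. One minor inaccuracy in your aside: if the bound of Theorem~\ref{tcltquant} should read $\overbar{V}(rW)$ rather than $\overbar{V}(W)$ --- as you suspect, and as the optimality remark following Theorem~\ref{tcltquantitative} suggests --- then the intermediate exponents do change (both terms become of order $r^{-d/2}$ rather than $r^{-d}$ and $r^{-3d/2}$), so ``the computation below is unchanged'' is an overstatement, though the conclusion $d_1(\hat\varphi_r,N)\le\bar{c}\,r^{-1/2}$ survives because $r^{-d/2}\le r^{-1/2}$ for $d\ge1$ and $r\ge1$.
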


By \cite[Proposition 21.6]{LastPenrose17} the rate of convergence
in Theorem \ref{tcltquantitative} is presumably optimal.

\subsection{Central limit theorems}

In this subsection we return to the (minimal) integrability
assumption \eqref{asecondmoment}. The following result stems
from \cite{HLS16}.

\begin{theorem}\label{tcltqualitative}
Suppose that $\varphi$ is a geometric functional and
that \eqref{asecondmoment} holds.
Assume also that $\sigma(\varphi,\varphi)>0$.
Then $\hat\varphi_r\overset{d}{\to}N$ as $r\to\infty$.
\end{theorem}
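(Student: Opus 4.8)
\emph{Strategy.} Theorem~\ref{tcltquant} already delivers a quantitative central limit theorem, but only under the third moment hypothesis \eqref{ethirddmoment}, whereas here only the second moment condition \eqref{asecondmoment} is available. The plan is a truncation argument: approximate $Z$ by a Boolean model with bounded grains, for which \eqref{ethirddmoment} is automatic, and control the error. For $a>0$ let $Y^{(a)}$ be the Poisson process of those grains in $Y$ whose circumscribed ball has radius at most $a$, and let $Z^{(a)}:=\bigcup_{K\in Y^{(a)}}K$ be the corresponding stationary Boolean model; its typical grain is bounded, so all moment conditions, in particular \eqref{ethirddmoment}, hold for $Z^{(a)}$. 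Write
\begin{align*}
X_r:=\varphi(Z\cap rW)-\BE\varphi(Z\cap rW),\qquad X_r^{(a)}:=\varphi(Z^{(a)}\cap rW)-\BE\varphi(Z^{(a)}\cap rW),
\end{align*}
put $\sigma_r^{(a)}:=\BV(X_r^{(a)})^{1/2}$ (so that $\sigma_r(\varphi)=\BV(X_r)^{1/2}$), and let $\sigma^{(a)}(\varphi,\varphi)$ denote the asymptotic variance \eqref{easymcov} of $\varphi$ with respect to $Z^{(a)}$, which exists by Theorem~\ref{thm:CovariancesGeneral}.

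\emph{Continuity in the truncation parameter.} The core of the proof is the assertion that $\sigma^{(a)}(\varphi,\varphi)\to\sigma(\varphi,\varphi)$ and that $\delta_a^2:=\lim_{r\to\infty}V_d(rW)^{-1}\BV(X_r-X_r^{(a)})\to 0$ as $a\to\infty$. Both rest on Theorem~\ref{thm:CovariancesGeneral} together with its bivariate version, proved by the same Fock space argument: since $\varphi(Z\cap rW)$ and $\varphi(Z^{(a)}\cap rW)$ are square integrable additive functionals of the same Poisson process $Y$, the chaos expansion gives, after rescaling exactly as in the proof of Theorem~\ref{thm:CovariancesGeneral}, that $V_d(rW)^{-1}\CV(\varphi(Z\cap rW),\varphi(Z^{(a)}\cap rW))$ converges, as $r\to\infty$, to a $\varrho$-type series pairing $\varphi^*$ with $\varphi^{*,(a)}(K):=\BE\varphi(Z^{(a)}\cap K)-\varphi(K)$. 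Since $Z^{(a)}\uparrow Z$ and a fixed convex body is met by only finitely many grains almost surely, one has $\varphi(Z^{(a)}\cap K)=\varphi(Z\cap K)$ for all sufficiently large $a$ (depending on the realization), so $\varphi^{*,(a)}\to\varphi^*$ pointwise; a dominated convergence argument in the series \eqref{relrho}, with $\Theta^{(a)}\uparrow\Theta$ and with \eqref{asecondmoment} furnishing an integrable $a$-independent majorant, then yields $\sigma^{(a)}(\varphi,\varphi)\to\sigma(\varphi,\varphi)$ and likewise convergence of the mixed series to $\sigma(\varphi,\varphi)$. Hence $\delta_a^2=\sigma(\varphi,\varphi)+\sigma^{(a)}(\varphi,\varphi)-2\lim_{r\to\infty}V_d(rW)^{-1}\CV(\cdot,\cdot)\to 0$; in particular $\sigma^{(a)}(\varphi,\varphi)>0$ for all large $a$, and $\rho_a:=\bigl(\sigma^{(a)}(\varphi,\varphi)/\sigma(\varphi,\varphi)\bigr)^{1/2}\to 1$.

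\emph{Assembling.} Fix $a$ large enough that $\sigma^{(a)}(\varphi,\varphi)>0$, and let $\hat\varphi_r^{(a)}$ be the normalised quantity \eqref{hatvarphir} formed from $Z^{(a)}$; for $r$ large both $\sigma_r(\varphi)>0$ and $\sigma_r^{(a)}>0$. Since \eqref{ethirddmoment} holds for $Z^{(a)}$ and $\sigma^{(a)}(\varphi,\varphi)>0$, Theorem~\ref{tcltquantitative} gives $\hat\varphi_r^{(a)}\overset{d}{\to}N$ as $r\to\infty$, and $\sigma_r^{(a)}/\sigma_r(\varphi)\to\rho_a$. From the identity $\hat\varphi_r=\tfrac{\sigma_r^{(a)}}{\sigma_r(\varphi)}\hat\varphi_r^{(a)}+\tfrac{X_r-X_r^{(a)}}{\sigma_r(\varphi)}$ and the triangle inequality we obtain, for any bounded $h\colon\R\to\R$ with Lipschitz constant at most $1$,
\begin{align*}
|\BE h(\hat\varphi_r)-\BE h(N)|\le \frac{\BE|X_r-X_r^{(a)}|}{\sigma_r(\varphi)}+\Bigl|\frac{\sigma_r^{(a)}}{\sigma_r(\varphi)}-1\Bigr|+|\BE h(\hat\varphi_r^{(a)})-\BE h(N)|,
\end{align*}
where we used $\BE|\hat\varphi_r^{(a)}|\le 1$. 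Let $r\to\infty$: the last term vanishes, $\BE|X_r-X_r^{(a)}|/\sigma_r(\varphi)\le(\BV(X_r-X_r^{(a)})/\sigma_r(\varphi)^2)^{1/2}\to\delta_a/\sqrt{\sigma(\varphi,\varphi)}$, and $\sigma_r^{(a)}/\sigma_r(\varphi)\to\rho_a$, whence $\limsup_{r\to\infty}|\BE h(\hat\varphi_r)-\BE h(N)|\le\delta_a/\sqrt{\sigma(\varphi,\varphi)}+|1-\rho_a|$. Letting $a\to\infty$ and invoking the previous step, the right-hand side tends to $0$. Since convergence of $\BE h(\hat\varphi_r)$ to $\BE h(N)$ for all bounded Lipschitz $h$ implies weak convergence, $\hat\varphi_r\overset{d}{\to}N$.

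\emph{Main obstacle.} The delicate point is the middle step, namely that the asymptotic variance of the truncation remainder $X_r-X_r^{(a)}$ tends to $0$ as $a\to\infty$. This is a dominated convergence statement for the (bivariate) series attached to Theorem~\ref{thm:CovariancesGeneral}, uniform in the truncation level $a$, and it is precisely here that the second moment assumption \eqref{asecondmoment} is indispensable; under \eqref{eqn:AssumptionQ} alone the relevant series need not even converge absolutely. Everything else is a routine combination of the quantitative central limit theorem with a Slutsky-type approximation.
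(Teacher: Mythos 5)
Your proof is a valid truncation argument but takes a genuinely different route from the paper's. You fix a truncation level $a$ (grains of circumradius at most $a$), apply the quantitative CLT of Theorem~\ref{tcltquantitative} as a black box for each fixed $a$ (legitimately, since $Z^{(a)}$ has bounded grains and hence all moments), and then send $a\to\infty$ through a Slutsky-type $\limsup$ estimate that rests on the continuity $\sigma^{(a)}(\varphi,\varphi)\to\sigma(\varphi,\varphi)$ and $\delta_a\to 0$. The paper instead couples the truncation level to $r$, setting $M_r=\{K:\overbar{V}(K)\le V_d(rW)^{1/2}\}$ and producing a single $r$-limit: it bounds $d_1(\hat\varphi_r,\varphi_r^*)$ by an $L^2$ distance (via \cite[Lemma 9.6]{HLS16}), and bounds $d_1(\varphi_r^*,N)$ by applying Theorem~\ref{tcltquantitative} to the $r$-dependent model $Z_r$ and then arguing from the \emph{explicit} form of the quantitative constant in \cite[Theorem 22.7]{LastPenrose17} (where third moments enter only in the last step) that $r^{-1/2}\bar{c}_r\to 0$. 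Your approach avoids opening that black box, which is a genuine simplification on one side of the ledger; what it buys this with is the need for a bivariate extension of Theorem~\ref{thm:CovariancesGeneral} and a dominated-convergence argument for the series \eqref{relrho} uniform in the truncation level. You correctly identify that step as the delicate one, and it is indeed where the real work would be: producing an $a$-independent integrable majorant for $(\varphi^{*,(a)})^2$ along the chaos series requires an estimate of $\BE|\varphi(Z^{(a)}\cap K_1\cap\cdots\cap K_n)|$ uniformly in $a$ from the local boundedness of $\varphi$ and the second moment hypothesis \eqref{asecondmoment}, and also a verification that the cross-covariance $V_d(rW)^{-1}\CV(\varphi(Z\cap rW),\varphi(Z^{(a)}\cap rW))$ converges at all (neither of these is established in the paper, which is presumably why the authors chose the $r$-coupled truncation). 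Everything else in your argument—the algebra of normalizations, the Lipschitz bound using $\BE|\hat\varphi_r^{(a)}|\le 1$, and the double $\limsup$—is correct.
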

\begin{proof} (Sketch) Under the stronger assumption
\eqref{asecondmoment} the result follows from Theorem \ref{tcltquantitative}.
In the general case we define for $r>0$ the set
$$
M_r=\big\{K\in\cK^{(d)}:\bar{V}(K)\le (V_d(rW))^{1/2}\}.
$$
The restriction of $Y$ to $M_r$ is a stationary Poisson process,
generating a stationary Boolean model $Z_r$. By the triangle inequality
for the Wasserstein distance,
\begin{align}\label{trianglewasser}
d_1(\hat{\varphi}_r,N)
\le d_1(\hat{\varphi}_r,\varphi^*_r)+d_1(\varphi^*_r,N),
\end{align}
where
$$
\varphi^*_r:=\BV(\varphi(Z_r\cap rW))^{-1/2}(\varphi(Z_r\cap rW)-\BE \varphi(Z_r\cap rW)).
$$
We have that $d_1(\hat{\varphi}_r,\varphi^*_r)
\le (\BE(\hat{\varphi}-\varphi^*_r))^{1/2}$,
which can be shown to converge to $0$ as $r\to\infty$; see
\cite[Lemma 9.6]{HLS16}. The distance $d_1(\varphi^*_r,N)$
can be treated with the non-asymptotic result of Theorem \ref{tcltquantitative},
applied to the Boolean model $Z_r$. The constant $\bar{c}\equiv\bar{c}_r$
depends on $r$. However, it can be seen from the explicit bounds in
the proof of Theorem 22.7 in \cite{LastPenrose17}
(the third moment condition is required only in the last step of the proof)
that $r^{-1/2}\bar{c}_r\to 0$ as $r\to\infty$.
Therefore we obtain that $d_1(\hat{\varphi}_r,N)\to\infty$ as $r\to\infty$
and hence the assertion.
\end{proof}

We finish this section with a {\em multivariate central limit theorem}
for the intrinsic volumes.
Given a positive semi-definite matrix $\Sigma$, we let
$N_\Sigma$ denote a centered Gaussian random
vector with covariance matrix $\Sigma$.

\begin{theorem}\label{tcltmultivariate} Assume that \eqref{asecondmoment}
and $\BP(Z^\circ_0\ne\emptyset)>0$ hold.
Define the asymptotic covariance matrix $\Sigma:=(\sigma(V_i,V_j))_{i,j=0,\ldots,d}$. Then
\begin{align*}
V_d(rW)^{-1/2}(V_0(Z\cap rW)-\BE V_0(Z\cap rW),\ldots,V_d(Z\cap rW)-\BE V_d(Z\cap rW)) \overset{d}{\to}N_\Sigma,
\end{align*}
as $r\to\infty$.
\end{theorem}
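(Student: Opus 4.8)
The plan is to deduce this multivariate central limit theorem from the scalar one already available, namely Theorem \ref{tcltqualitative}, by means of the Cram\'er--Wold device. Fix $a=(a_0,\dots,a_d)\in\R^{d+1}$ and set $\varphi:=\sum_{i=0}^d a_iV_i$. Being a finite linear combination of intrinsic volumes, $\varphi$ is additive and translation invariant on $\cR^d$, and since each $V_i$ is monotone and nonnegative it is locally bounded; thus $\varphi$ is a geometric functional, and the standing assumption \eqref{asecondmoment} is exactly what is needed to apply Theorems \ref{thm:CovariancesGeneral} and \ref{tcltqualitative} to $\varphi$. By bilinearity of the covariance, for every $r>0$,
\begin{align*}
\frac{\CV(\varphi(Z\cap rW),\varphi(Z\cap rW))}{V_d(rW)}
=\sum_{i,j=0}^d a_ia_j\,\frac{\CV(V_i(Z\cap rW),V_j(Z\cap rW))}{V_d(rW)},
\end{align*}
and letting $r\to\infty$, Theorem \ref{thm:CovariancesGeneral} (applied to $\varphi$ on the left and to $V_i,V_j$ termwise on the right) gives $\sigma(\varphi,\varphi)=\sum_{i,j=0}^d a_ia_j\,\sigma(V_i,V_j)=a^\top\Sigma a$.

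If $a=0$ the assertion is trivial, so assume $a\neq 0$. Since $\BP(Z^\circ_0\neq\emptyset)>0$, Theorem \ref{tposdefinite} shows that $\Sigma$ is positive definite, hence $\sigma(\varphi,\varphi)=a^\top\Sigma a>0$. I would then apply Theorem \ref{tcltqualitative} to conclude $\hat\varphi_r\overset{d}{\to}N$ as $r\to\infty$, with $\hat\varphi_r$ and $\sigma_r(\varphi)$ as in \eqref{hatvarphir} and \eqref{sigmar}. Theorem \ref{thm:CovariancesGeneral} also gives $\sigma_r(\varphi)^2/V_d(rW)\to\sigma(\varphi,\varphi)>0$, so $\sigma_r(\varphi)/V_d(rW)^{1/2}\to\sigma(\varphi,\varphi)^{1/2}$; in particular $\sigma_r(\varphi)>0$ for all large $r$, so $\hat\varphi_r$ is eventually well defined. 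From
\begin{align*}
V_d(rW)^{-1/2}\bigl(\varphi(Z\cap rW)-\BE\varphi(Z\cap rW)\bigr)
=\frac{\sigma_r(\varphi)}{V_d(rW)^{1/2}}\,\hat\varphi_r
\end{align*}
and Slutsky's theorem, the left-hand side converges in distribution to $\sigma(\varphi,\varphi)^{1/2}N$, a centered Gaussian with variance $a^\top\Sigma a$ --- which is precisely the distribution of $\langle a,N_\Sigma\rangle$. Noting that this left-hand side equals $\langle a,\xi_r\rangle$ with $\xi_r:=V_d(rW)^{-1/2}(V_0(Z\cap rW)-\BE V_0(Z\cap rW),\dots,V_d(Z\cap rW)-\BE V_d(Z\cap rW))$, and since $a$ was arbitrary, the Cram\'er--Wold theorem yields $\xi_r\overset{d}{\to}N_\Sigma$.

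I do not expect a genuine obstacle here: all the analytic substance is already in place, namely the existence and evaluation of the asymptotic covariances (Theorem \ref{thm:CovariancesGeneral}), the positive definiteness of $\Sigma$ under $\BP(Z^\circ_0\neq\emptyset)>0$ (Theorem \ref{tposdefinite}), and the one-dimensional central limit theorem under minimal moments (Theorem \ref{tcltqualitative}). The remaining points are routine: verifying that linear combinations of intrinsic volumes are geometric functionals to which the cited results apply, the bilinearity identity $\sigma(\varphi,\varphi)=a^\top\Sigma a$, and reconciling the two normalizations $\sigma_r(\varphi)$ and $V_d(rW)^{1/2}$ via the variance asymptotics and Slutsky's theorem.
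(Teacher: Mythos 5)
Your argument is correct and follows the same route as the paper's own proof: Cram\'er--Wold reduction to the scalar case for $\varphi=\sum_i a_iV_i$, positive definiteness of $\Sigma$ from Theorem \ref{tposdefinite} to ensure $\sigma(\varphi,\varphi)>0$, and then the univariate CLT of Theorem \ref{tcltqualitative} combined with Slutsky's theorem via the variance asymptotics $\sigma_r(\varphi)^2/V_d(rW)\to\sigma(\varphi,\varphi)$. You merely fill in a few details the paper leaves implicit (the trivial case $a=0$, the bilinearity identity, and the eventual positivity of $\sigma_r(\varphi)$), but there is no substantive difference in method.
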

\begin{proof} If $\varphi$ is a geometric functional with $\sigma(\varphi,\varphi)>0$
then it follows from Theorem \ref{tcltqualitative} and
Slutzky's theorem that
\begin{align}\label{ecltunivariate}
V_d(rW)^{-1/2}(\varphi(Z\cap rW)-\varphi(Z\cap rW))\overset{d}{\to}N_{\sigma(\varphi,\varphi)}
\quad \text{as $r\to\infty$}.
\end{align}

We use the Cram\'er Wold theorem (see, e.g., \cite{Kallenberg}). Given
a non-zero vector $(a_0,\ldots,a_d)\in\R^{d+1}$ we need to show
that \eqref{ecltunivariate} holds for the geometric functional $\varphi:=\sum^d_{i=}a_i\varphi$.
Note that the asymptotic variance of $\varphi$ is given by
$$
\sigma(\varphi,\varphi)=\sum_{i,j=0}^d a_ia_j \sigma(V_i,V_j),
$$
which is also the variance of the centered Gaussian random variable
$\sum^d_{i=0} a_i N_i$, where
$N_i$ is the $i$-th component of $N_\Sigma$. By Theorem \ref{tposdefinite},
$\sigma(\varphi,\varphi)>0$, so that \eqref{ecltunivariate} holds.
This concludes the proof.
\end{proof}

\vspace{0.2cm}
\noindent
{\bf Acknowledgments:} A first version of this paper was initiated by the late Wolfgang Weil.
While working on this project, DH and GL have realized
once again the numerous important contributions to convex, integral and
stochastic geometry made by Wolfgang. It was a
privilege and an immense pleasure to learn from him, and to cooperate with him.


\begin{thebibliography}{99}%
\bibitem{CSKM13}
Chiu, S.N., Stoyan, D., Kendall, W.S.\ and Mecke, J.\
{\em Stochastic Geometry and its Applications.}
3rd edn.\ Wiley, Chichester. (2013).


\bibitem{Davy76}
Davy, P.: Projected thick sections through multi-dimensional
particle aggregates. {\em J.\ Appl.\ Probab.} {\bf 13} (1976), 714--722.
Correction: {\em J.\ Appl.\ Probab.} {\bf 15} (1978), 456.


\bibitem{Hug1999}
Hug, D.: Measures, Curvatures and Currents in Convex Geometry:
Habilitationsschrift, Albert-Ludwig-Universit\"at Freiburg im Breisgau,
(1999).


\bibitem{HHKM2014} H\"orrmann, J., Hug, D., Klatt, M., Mecke, K.:
  Minkowski tensor density formulas for Boolean
  models. {\em Adv. Appl. Math.} \textbf {55} (2014), 48--85.
%

\bibitem{HR2018} Hug, D., Rataj, J.: Mixed curvature measures of
  translative integral geometry.  {\em Geom. Dedicata} \textbf{195} (2018), 101--120.

\bibitem{HW2019} Hug, D., Weil, W.:
  Determination of Boolean models by densities of mixed volumes.
{\em Adv. in Appl. Probab.} \textbf{51} (2019),  116--135.



\bibitem{HRW2013} Hug, D., Rataj, J., Weil, W.: A product integral
  representation of mixed volumes of two convex
  bodies. {\em Adv. Geom.} \textbf {13} (2013), 633--662.

\bibitem{HRW2018} Hug, D., Rataj, J., Weil, W.: A flag representation of mixed volumes and mixed functionals of convex bodies.
{\em J. Math. Anal. Appl.} \textbf{460} (2018), 745--776.


 \bibitem{HW2020}
 Hug, D., Weil, W.: Lectures on Convex Geomtry. Graduate Texts in Mathematics 286, Springer, Cham (2020).

\bibitem{HKLS2017} Hug, D., Klatt, M.A., Last, G., Schulte, M.: Second
  order analysis of geometric functionals of Boolean models. In:
  Jensen, E.B.V., Kiderlen, M. (eds.) Tensor Valuations and their
  Applications in Stochastic Geometry and Imaging,
  pp. 339-383. Lecture Notes in Mathematics, Vol. {\bf 2177}, Springer
  (2017).g

\bibitem{HLS16}
Hug, D., Last, G.\ and Schulte, M.:
Second order properties and central limit theorems for geometric functionals of Boolean models.
{\em Ann.\ Appl.\ Probab.} {\bf 26} (2016), 73--135.


\bibitem{Kallenberg}
Kallenberg, O.:
\newblock {\it Foundations of Modern Probability}.
\newblock 2nd edn.\ Springer, New York, (2002).

\bibitem{LaPeSchu16}
Last, G., Peccati, G.\ and  Schulte, M:
Normal approximation on Poisson spaces: Mehler's formula,
second order Poincar\'e inequalities and stabilization.
{\em Probab.\ Theory Related Fields} {\bf 165}  (2016), 667--723.


\bibitem{LastPenrose17}
Last, G.\ and Penrose, M.:  {\em Lectures on the Poisson Process.}
Cambridge University Press, Cambridge (2017).

\bibitem{McMullen80}
McMullen, P.:
Continuous translation invariant valuations on the space of compact convex sets.
{\em Archiv der Mathematik} {\bf 34} (1980), 377--384.


\bibitem{Mecke01}
Mecke, K.R.:
Exact moments of curvature measures in the Boolean model.
{\em Journal of Statistical Physics} {\bf 102} (2001), 1343--1381.


\bibitem{Miles76}
Miles, R.E.:
Estimating aggregate and overall characteristics from thick sections
by transmission microscopy. {\em J.\ Microsc.} {\bf 107}  (1976), 227--233.


\bibitem{Molchanov17}
Molchanov, I.: {\em Theory of Random Sets.} 2 edn. London, Springer
(2017).



\bibitem{PSTU10}
Peccati, G., Sol\'e, J.L., Taqqu, M.S.\ and Utzet, F.:
Stein's method and normal approximation of Poisson functionals.
{\em Ann.\ Probab.} {\bf 38} (2010), 443--478.

\bibitem{Schneider03}
Schneider, R.: Mixed polytopes. {\em Discrete Comput. Geom.} {\bf 29} (2003),  575--593.

\bibitem{S2014} Schneider, R.: Convex Bodies: The Brunn-Minkowski
  Theory, second expanded edition. Cambridge University Press,
  Cambridge, 2014.

\bibitem{SW2008} Schneider, R., Weil, W.: Stochastic and Integral
  Geometry. Springer, Heidelberg-New York (2008).


\bibitem{schroederturk2013a}
Schr\"oder-Turk, G.E., Mickel, W., Schaller, F., Breidenbach, B.,  Hug, D. and Mecke., K.
Minkowski tensors of anisotropic spatial structure.
{  New J. Phys.} {\bf 15} (2013), 083028.



\bibitem{SchulteW2017} Schulte, J., Weil, W.: Valuations and Boolean
  models. In: Jensen, E.B.V., Kiderlen, M. (eds.) Tensor Valuations
  and their Applications in Stochastic Geometry and Imaging,
  pp. 301-338. Lecture Notes in Mathematics, Vol. {\bf 2177}, Springer
  (2017).

\bibitem{Stein72}
Stein, C.:
A bound for the error in the normal approximation to the distribution
of a sum of dependent random variables.
In: Le Cam, L., Neyman, J.\ and Scott, E.L.\ (eds.)
{\em Proceedings of the Sixth Berkeley Symposium on Mathematical
Statistics and Probability, Vol.\ 2: Probability
Theory.} University of Berkeley Press, Berkeley, (1972), pp.\ 583--602.

\bibitem{Weil90} Weil, W.:
Iterations of translative integral formulae and non-isotropic Poisson processes of particles.
Math. Z. {\bf 205} (1990), 531--549.


\bibitem{Weil95} Weil, W.:
Translative and kinematic integral formulae for support functions.
Geometriae Dedicata {\bf 57} (1995), 91--103.


\bibitem{W2015} Weil, W.: Integral geometry of translation invariant
  functionals, I: The polytopal case. Adv. Appl. Math. {\bf 66}  (2015),
  46--79.

\bibitem{W2017} Weil, W.: Integral geometry of translation invariant
  functionals, II: The case of general convex
  bodies. Adv. Appl. Math. {\bf 83}  (2017), 145--171.


\end{thebibliography}
\end{document}